\newtheorem{thm}{Theorem}[section]
\newtheorem{lemma}[thm]{Lemma}
\newtheorem{prop}[thm]{Proposition}
\newtheorem{rmk}[thm]{Remark}
\theoremstyle{definition}
\numberwithin{equation}{section}
\newcommand{\rr}{\mathbb{R}}
\newcommand{\eps}{\varepsilon}
\newcommand{\Uea}{U_{\eps,a}}
\newcommand{\Ueaz}{U_{\eps,a,z}}
\newcommand{\bG}{\bar{G}}
\newcommand{\Weaz}{W_\eps}
\newcommand{\ome}{\omega_\eps}
\newcommand{\diam}{\operatorname{diam}}
\newcommand{\ke}{k_\eps}
\newcommand{\We}{W_\eps}
\renewcommand{\ae}{a_{\eps,1}}
\newcommand{\aae}{a_{\eps,2}}
\newcommand{\toe}{\widetilde\omega_\eps}
\newcommand{\ue}{u_\eps}
\newcommand{\ga}{\gamma}
\newcommand{\epn}{\eps_n}
\newcommand{\un}{u_n}
\newcommand{\Ln}{L_{\eps_n}}
\newcommand{\Qn}{Q_{\eps_n}}
\newcommand{\zn}{z_{1,n}}
\newcommand{\znn}{z_{2,n}}
\newcommand{\tun}{\widetilde u_{1,n}}
\newcommand{\Hg}{\mathcal H_\gamma}
\newcommand{\bh}{\bar h}
\newcommand{\zg}{z_{1}^\gamma}
\newcommand{\zgg}{z_{2}^\gamma}
\newcommand{\Ie}{I_\eps}
\newcommand{\Ke}{K_\eps}
\newcommand{\Fe}{F_\eps}
\newcommand{\mM}{\mathcal M}
\newcommand{\zmin}{\underline z}
\newcommand{\tilzgg}{\widetilde z_2^\gamma}
\newcommand{\pp}{\overline p}
\begin{document}
\title[A free boundary problem related to plasmas]{Sign-changing two-peak solutions 
for an elliptic free boundary problem related to confined plasmas}
\author[G.~Pisante]{Giovanni Pisante}
\address[Giovanni Pisante]{Dipartimento di Matematica e Fisica, Seconda Universit\`{a} degli Studi di Napoli, 
Viale Lincoln 5, 81100 Caserta, Italy}
\email{giovanni.pisante@unina2.it}
\author[T.~Ricciardi]{Tonia Ricciardi$^*$}
\address[Tonia Ricciardi]{Dipartimento di Matematica e Applicazioni ``R.~Caccioppoli", Universit\`{a} di Napoli Federico II, 
Via Cintia, 80126 Napoli, Italy}
\email{tonricci@unina.it}
\thanks{2010 \textit{Mathematics Subject Classification.} 35J60, 35J20, 82D10}
\thanks{$^*$Corresponding author}
\thanks{This research was carried out within the Marie Curie project FP7-MC-2009-IRSES-247486.
T.R.\ was also supported by PRIN {\em Aspetti variazionali e perturbativi nei problemi differenziali nonlineari}
and by the GNAMPA Project {\em Alcuni aspetti di equazioni ellittiche nonlineari.}  
G.P.\ was also supported by PRIN {\em Calcolo delle Variazioni} and by the GNAMPA Project {\em Un approccio variazionale all'analisi di modelli competitivi non lineari}.}
\begin{abstract}
By a perturbative argument, we construct solutions for a plasma-type problem with two 
opposite-signed sharp peaks at levels $1$ and $-\ga$, respectively, where $0<\ga<1$.\\
We establish some physically relevant qualitative properties for such solutions,
including the connectedness of the level sets
and the asymptotic location of the peaks as $\ga\to0^+$.
\end{abstract}
\date{December 23, 2015}
\keywords{Peak solutions, Lyapunov-Schmidt reduction, plasma problem, free boundaries.} 
\maketitle
\section{Introduction}
Motivated by the description of equilibrium states for plasmas in a tokamak 
\cite{Te}, we consider the 
following problem:
\begin{equation}
\label{eq:pb1}
\left\{
\begin{aligned}
-\eps^2\Delta u=&\,(u-1)_+-(-u-\gamma)_+&&\hbox{in}\ \Omega,\\
u =&\,0&&\hbox{on}\ \partial \Omega,
\end{aligned}\right.
\end{equation}
where $\gamma$ is a positive constant, $\eps>0$ is a small positive number and $ \Omega\subset\rr^2$ is a smooth bounded domain.
Problem~\eqref{eq:pb1} generalizes the classical plasma problem:
\begin{equation*}
\left\{
\begin{aligned}
-\eps^2\Delta u=&\,(u-1)_+&&\hbox{in}\ \Omega,\\
u =&\,0&&\hbox{on}\ \partial \Omega,
\end{aligned}\right.
\end{equation*}
as derived in \cite{Te} and extensively analyzed in
\cite{BerestyckiBrezis, CaffarelliFriedman, kinderlehrerspruck, Te}, to the case of a nonlinearity indefinite in sign. 
From the physical point of view, problem~\eqref{eq:pb1} corresponds to the case where the tokamak contains two 
plasmas ionized with charges 1 and $-\ga$, respectively. 
The unknown function $u$ corresponds to the magnetic potential and $\eps>0$ is a constant depending 
on the constitution of the plasmas.
Problem~\eqref{eq:pb1} admits a variational characterization. Indeed, solutions to \eqref{eq:pb1}
correspond to critical points for the functional
\begin{equation*}
\Ie(u)=\frac{\eps^2}{2}\int_\Omega|\nabla u|^2-\frac{1}{2}\int_\Omega[(u-1)_+]^2
-\frac{1}{2}\int_\Omega[(-u-\gamma)_+]^2,
\qquad u\in H_0^1(\Omega).
\end{equation*}
Our first aim in this article is to construct sign-changing solutions with exactly two sharp peaks, 
via a perturbative Lyapunov-Schmidt argument as developed in \cite{CaoPengYan, DancerYan}.
Our next aim, containing the more innovative aspects, is to derive some new qualitative properties of solutions, 
including the connectedness of the level sets and
the asymptotic location of the peaks as $\gamma\to0^+$.  
\par
We recall that problems of the form $-\Delta u=f(u)$ in $\Omega$, $u=0$ on $\partial\Omega$
are also of central interest in the context of steady incompressible Euler flows, 
see, e.g., \cite{CaoLiuWei, CaoPengYan2015, SmVS} and the references therein.
In this context, such sign-changing solutions are related to the
Mallier-Maslowe counter rotating vortices \cite{BarPis, MallierMaslowe, PistoiaRicciardi}. 
Sign-changing solutions with exactly two peaks were also constructed for a
two-dimensional elliptic problem with large exponent in \cite{EspositoMussoPistoia2007}.
\par
Unlike the above mentioned models, problem~\eqref{eq:pb1} involves the nonlinearity $f(t)=(t-1)_+-(-t-\gamma)_+$
which is only Lipschitz continuous.
Consequently, major technical difficulties arise due to
the \lq\lq free boundaries" $\{u=1\}$ and $\{u=-\gamma\}$, particularly
in establishing some regularity properties that are essential in order to obtain solutions as critical points of 
continuously differentiable functionals.
Such difficulties were overcome in \cite{CaoPengYan} in the case of one-sided peak solutions.
A key ingredient to this end is to establish that such free boundaries have zero measure,
see Proposition~\ref{prop:zeromeasure} below.
Here, we shall obtain this key property by a new simple {\em ad hoc} argument involving the Faber-Krahn inequality, 
suitably tailored to 
the case of two-peak sign-changing solutions. 
We note that our perturbative construction does not provide any variational characterization
of solutions, which is often employed in establishing the zero measure of free boundaries.
Our arguments also suggest relations to the \lq\lq twisted eigenfunctions", recently analyzed in \cite{CHP, CHP1}.
\par
After constructing the peak solutions in Theorem~\ref{thm:main}, we analyze the asymptotic location 
of the peaks as $\ga\to0^+$. Roughly speaking, our result states that for the physically relevant solutions
corresponding to minima of the Kirchhoff-Routh Hamiltonian, as $\ga\to0^+$,
the positive peak approaches a harmonic center $\zmin\in\Omega$ and the negative peak escapes to a point
$\pp\in\partial\Omega$ which maximizes the outward normal derivative of the Green's function with
pole at $\zmin$. 
This type of property was introduced in \cite{PistoiaRicciardi}, where the case of a convex domain $\Omega$
is considered. Here, we remove the convexity assumption on $\Omega$, see Theorem~\ref{thm:properties}.
\par
This article is organized as follows. In Section~\ref{sec:results}
we introduce some notation and we precisely state the main results.
In Section~\ref{sec:ansatz} we define the ansatz for solutions in terms of the basic cell functions $U_{\eps,a,z}$
defined in \eqref{def:Ueaz}.
We also establish some necessary estimates for the approximate solutions.
Section~\ref{sec:reduction} contains the linear theory and the Lyapunov-Schmidt reduction 
by which we obtain a solution $\ue$ to a \lq\lq projected problem" 
for every fixed choice of peak points $z_1,z_2\in\Omega$,
$z_1\neq z_2$. Thus, we reduce problem~\eqref{eq:pb1}
to a four-dimensional problem. The results in this section rely on an approach introduced \cite{CaoPengYan, DancerYan}.
Therefore, some of the proofs in this section are only outlined for the reader's convenience.
In Section~\ref{sec:freebdry} we prove the crucial zero-measure property
for the free boundaries $\{\ue=1\}$ and $\{\ue=-\ga\}$, where $\ue$ is the solution to the
projected problem obtained in Section~\ref{sec:reduction}.
In Section~\ref{sec:redfunctl} we insert $\ue$ into the variational functional
for \eqref{eq:pb1} and we check that critical points for the resulting
function $\Ke(z_1,z_2)$ yield the desired solutions to the full equation 
\eqref{eq:pb1}; we also establish the connectedness of the level sets  thus establishing Theorem~\ref{thm:main}.
In Section~\ref{sec:qualprops} we analyze the limit profile 
of the \lq\lq minimal" solutions as $\ga\to0^+$,
as stated in Theorem~\ref{thm:properties}.
%
\section{Statement of the main results}
\label{sec:results}
In order to state our results precisely, we introduce some notation.
Let $s>0$ be defined by $s^2=j_0^{(1)}$, where $j_0^{(1)}\cong2.405$ is the first zero of $J^0$, the first Bessel function
of the first kind. Then, the first eigenvalue of $-\Delta$ in $B_s(0)$ subject to zero Dirichlet boundary conditions
is $\lambda_1(B_s(0))=1$.
Let $\varphi_1>0$ be the first eigenfunction of $-\Delta$ in $B_s(0)$ satisfying $\varphi_1(0)=1$. 
Namely, $\varphi_1$ satisfies:
\begin{equation*}
\left\{
\begin{aligned}
-\Delta\varphi_1=&\;\varphi_1&&\hbox{in\ }B_s(0)\\
\varphi_1=&\;0&&\mbox{on\ }\partial B_s(0)\\
\varphi_1(0)=&\;1.
\end{aligned}
\right.
\end{equation*}
For simplicity, in what follows we identify $\varphi$ with its radial profile;
namely, we denote $\varphi(s)=\varphi(x)\Big|_{|x|=s}=J^0(s)$.
\par
We denote by $U$ the \lq\lq basic cell function" defined on the whole space $\mathbb R^2$ by
\begin{equation}
\label{def:U}
U(x)=
\left\{
\begin{aligned}
&\varphi_1(x), &&\hbox{if\ }0\le|x|<s\\
&s|\varphi'(s)|\ln\frac{s}{|x|},&&\hbox{if\ }|x|\ge s.
\end{aligned}
\right.
\end{equation}
We note that any constant multiple of $U$ satisfies the equation
\begin{equation*}
-\Delta u=u_+
\qquad\hbox{ on }\;\mathbb R^2.
\end{equation*}
We shall obtain the desired peak solutions as perturbations of
an approximate solution defined in terms of suitably rescaled translations of $U$.
More precisely, let $R>0$ be a sufficiently large constant so that 
\[
R>\diam\Omega
\]
and consequently $\Omega\subset B_R(z)$ for all $z\in\Omega$.
We define, for any $\eps,a>0$: 
\begin{equation*}
U_{\eps,a}(x)=a+\frac{a}{\ln\frac{R}{s\eps}}U\left(\frac{x}{\eps}\right).
\end{equation*}
Then $U_{\eps,a}$ satisfies the Dirichlet problem
\begin{equation*}
\left\{
\begin{aligned}
-\eps^2\Delta u=&\;(u-a)_+&&\hbox{in\ }B_R(0)\\
u=&\;0&&\hbox{on\ }\partial B_R(0).
\end{aligned}
\right.
\end{equation*}
For any $z\in\mathbb R^2$ we set
\begin{equation}
\label{def:Ueaz}
U_{\eps,a,z}(x)=U_{\eps,a}(x-z).
\end{equation}
Thus, we obtain the family of cell functions $\{U_{\eps,a,z}:\ \eps,a>0,\ z\in\mathbb R^2\}$.
We note that as $\eps\to0$ the function $U_{\eps,a,z}$ develops a sharp peak at the point $z$
at level $a$. Moreover, as $\eps\to0$, we have $U_{\eps,a,z}\to0$ in $L_{\mathrm{loc}}^\infty(\Omega\setminus \{z\})$ 
and $U_{\eps,a,z}(z)\to a$.
\par
We denote by
$\bG$ the Green's function defined by
\begin{equation}
\label{def:bG}
\left\{
\begin{aligned}
-\Delta_x\bG(x,z)=&\;2\pi\delta_z && \hbox{in\ }\Omega\\
\bG(x,z)=&\;0 && \hbox{on\ }\partial\Omega,
\end{aligned}
\right.
\end{equation}
and by $g(x,z)$ the regular part of $\bG$, defined by
\begin{equation*}
\left\{
\begin{aligned}
-\Delta_x g(x,z)=&\;0 &&\hbox{in\ }\Omega\\
g(x,z)=&\;\ln\frac{R}{|x-z|} && \hbox{on\ }\partial\Omega.
\end{aligned}
\right.
\end{equation*}
We note that $\bG(x,z)=\ln\frac{R}{|x-z|}-g(x,z)$ and, by the choice of $R$, we have
$g(x,z)\ge0$ for all $x,z\in\Omega$.
We denote by $h:\Omega\to\mathbb R$ the Robin's function:
\begin{equation*}
h(z)=g(z,z).
\end{equation*}
We remark that with this notation we have 
\begin{equation}
\label{eq:hproperties}
h(z)\ge0\ \hbox{for all\ }z\in\Omega,
\qquad
h(z)\to+\infty\ \hbox{as\ }z\to\partial\Omega.
\end{equation}
We set 
\begin{equation*}
\mathcal M=\{(x,y)\in\Omega\times\Omega:x\neq y\}.
\end{equation*}
We denote by $\mathcal H_\gamma$ the Kirchhoff-Routh type Hamiltonian (see, e.g., \cite{SmVS}) defined 
by
\begin{equation*}
\mathcal H_\gamma(z_1,z_2)=h(z_1)+2\gamma\bG(z_1,z_2)+\gamma^2h(z_2)
\end{equation*}
for $(z_1,z_2)\in\mathcal M$.
We note that $\mathcal H_\gamma$ is bounded from below on $\mathcal M$.
We denote by $P:H^1(\Omega)\to H_0^1(\Omega)$ the standard projection operator and
we denote by $\mathrm{cat}\mathcal M$ the Lusternik-Schnirelmann category of $\mathcal M$.
\par
Our aim is to establish the following results.
\begin{thm}
\label{thm:main}
For every $\ga>0$ there exists $\eps_\ga>0$ such that for all $0<\eps<\eps_\ga$ problem~\eqref{eq:pb1}
admits at least $\mathrm{cat}\mathcal M$ sign-changing two-peak solutions
of the form 
\begin{equation}
\label{eq:solform}
u_\eps^i=PU_{\eps,a_{\eps,1}^i,z_{\eps,1}^i}-PU_{\eps,a_{\eps,2}^i,z_{\eps,2}^i}+\omega_\eps^i, 
\end{equation}
$i=1,\ldots,\mathrm{cat}\mathcal M$,
where $\|\omega_\eps^i\|_{L^\infty(\Omega)}=O(\eps/|\ln\eps|)$, $a_{\eps,1}^i=1+O(|\ln\eps|^{-1})$,
$a_{\eps,2}^i=\gamma+O(|\ln\eps|^{-1})$ and $(z_{\eps,1}^i,z_{\eps,2}^i)\in\mathcal M$,
$(z_{\eps,1}^i,z_{\eps,2}^i)\to(z_1^i,z_2^i)\in\mathcal M$,
with $(z_1^i,z_2^i)$ a critical point for $\mathcal H_\gamma$.
\par
Moreover, the following properties hold.
\begin{enumerate}
\item [(i)]
The level sets $\{u_{\eps}^i>1\}$ and $\{u_\eps^i<-\gamma\}$,
$i=1,\ldots,\mathrm{cat}\mathcal M$, are connected.
\item[(ii)]
If $\gamma=1$, then problem~\eqref{eq:pb1}
admits at least $\mathrm{cat}[\mathcal M/((x,y)\sim(y,x))]$ pairs of sign-changing solutions
of the form~\eqref{eq:solform}, $i=1,\ldots,\mathrm{cat}[\mathcal M/((x,y)\sim(y,x))]$.
\end{enumerate}
\end{thm}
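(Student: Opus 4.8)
The plan is to carry out a Lyapunov--Schmidt reduction in the spirit of \cite{CaoPengYan, DancerYan}, adapted to sign-changing two-peak configurations and combined with a Lusternik--Schnirelmann argument on the reduced functional and an \emph{ad hoc} treatment of the free boundaries; this is done in Sections~\ref{sec:ansatz}--\ref{sec:redfunctl}. Fix $(z_1,z_2)\in\mM$ and take $PU_{\eps,a_1,z_1}-PU_{\eps,a_2,z_2}$, with $a_1$ near $1$ and $a_2$ near $\ga$, as an approximate solution of \eqref{eq:pb1}: by the estimates of Section~\ref{sec:ansatz}, which rest on the facts that each $U_{\eps,a,z}$ solves $-\eps^2\Delta u=(u-a)_+$ on $B_R(z)$ and that $U_{\eps,a,z}\to0$ locally uniformly away from $z$, the error is small. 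Linearizing around this ansatz, the obstruction to solvability is the approximate kernel spanned by the four translation derivatives $\partial_{z_{j,k}}PU_{\eps,a_j,z_j}$ ($j,k=1,2$); since $f(t)=(t-1)_+-(-t-\ga)_+$ is merely Lipschitz the relevant operators are not $C^1$, so, following \cite{CaoPengYan}, I would work directly with the equation, proving uniform invertibility of the linearized operator on the orthogonal complement of this kernel. A contraction-mapping argument then produces, for every small $\eps$ and every $(z_1,z_2)\in\mM$, a solution $\ue=PU_{\eps,a_1,z_1}-PU_{\eps,a_2,z_2}+\ome$ of the projected problem $-\eps^2\Delta\ue=(\ue-1)_+-(-\ue-\ga)_++\sum_{j,k}c_{j,k}\partial_{z_{j,k}}PU_{\eps,a_j,z_j}$, with $\|\ome\|_{L^\infty}=O(\eps/|\ln\eps|)$, $a_1=1+O(|\ln\eps|^{-1})$ and $a_2=\ga+O(|\ln\eps|^{-1})$; this reduces \eqref{eq:pb1} to the four-dimensional problem of finding critical points on $\mM$ of $\Ke(z_1,z_2):=\Ie(\ue(z_1,z_2))$.

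\emph{Zero measure of the free boundaries and $C^1$ regularity.} In order to differentiate $\Ke$ and to conclude that its critical points are genuine solutions of \eqref{eq:pb1}, one needs $z\mapsto\ue(z_1,z_2)$ to be of class $C^1$; by the implicit function theorem this requires the Nemytskii map $u\mapsto(u-1)_+-(-u-\ga)_+$ to be differentiable along $\ue$, which holds exactly when the free boundaries $\{\ue=1\}$ and $\{\ue=-\ga\}$ have zero Lebesgue measure (Proposition~\ref{prop:zeromeasure}). This is the main obstacle: $\ue$ is built perturbatively and has no variational characterization, so the usual routes to the zero-measure property are unavailable. I would instead argue by Faber--Krahn, tailored to the two-peak configuration: $(\ue-1)_+$, extended by zero, is nonnegative and, up to the lower-order Lagrange-multiplier term, satisfies $-\eps^2\Delta v=v$ on the plasma region $\{\ue>1\}$, hence is a first Dirichlet eigenfunction there, so $\eps^2\la_1(\{\ue>1\})=1+o(1)$, and likewise $\eps^2\la_1(\{\ue<-\ga\})=1+o(1)$ near $z_2$. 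The Faber--Krahn inequality bounds these first eigenvalues from below by those of disks of equal area, and reconciling this with the sharp localization of $\{\ue>1\}$ near $z_1$ and of $\{\ue<-\ga\}$ near $z_2$ provided by the ansatz excludes a level set of positive measure. Granted this, $\Ie$ is $C^1$ near $\ue$, $\Ke\in C^1(\mM)$, and a direct computation shows that $\nabla\Ke(z_1,z_2)=0$ forces all the $c_{j,k}$ to vanish, i.e.\ $\ue(z_1,z_2)$ solves \eqref{eq:pb1}.

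\emph{Expansion of $\Ke$ and multiplicity.} Inserting the ansatz into $\Ie$ and using the estimates above yields, uniformly on compact subsets of $\mM$, an expansion $\Ke(z_1,z_2)=c_\eps+d_\eps\Hg(z_1,z_2)+o(d_\eps)$ as $\eps\to0$, with $d_\eps$ of fixed sign, the Kirchhoff--Routh Hamiltonian $\Hg$ arising from the self-interactions of the two projected bubbles through the Robin function $h$ and their mutual interaction through $\bG$. Since $h(z)\to+\infty$ as $z\to\partial\Omega$, $\bG(z_1,z_2)\to+\infty$ as $z_1\to z_2$ and $h,\bG\ge0$, the sublevel sets of $\Hg$ are compact subsets of $\mM$ and, at large levels, carry the full category $\mathrm{cat}\mM$; since $\Ke$ is, for small $\eps$, a $C^1$-small perturbation of $c_\eps+d_\eps\Hg$ on such a set, a standard Lusternik--Schnirelmann/deformation argument produces at least $\mathrm{cat}\mM$ critical points of $\Ke$, each yielding a solution $\ue^i$ of the form \eqref{eq:solform} with $(z_{\eps,1}^i,z_{\eps,2}^i)\to(z_1^i,z_2^i)$ a critical point of $\Hg$.

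\emph{Connectedness of the level sets, and the case $\ga=1$.} Away from its peaks $\ue^i\to0$ locally uniformly, so for small $\eps$ the set $\{\ue^i>1\}$ lies in a small neighbourhood of $z_{\eps,1}^i$ and $\{\ue^i<-\ga\}$ in a small neighbourhood of $z_{\eps,2}^i$. Rescaling $x=z_{\eps,1}^i+\eps y$ and multiplying by $|\ln\eps|$, $\ue^i-1$ converges in $C^1_{\mathrm{loc}}$ to a constant plus $U$, whose superlevel set at level $0$ is a disk on whose bounding circle the gradient is nonzero (immediate from \eqref{def:U}); by the implicit function theorem $\{\ue^i=1\}$ is then, for small $\eps$, a single Jordan curve bounding the connected region $\{\ue^i>1\}$, and the same argument near $z_{\eps,2}^i$ handles $\{\ue^i<-\ga\}$ -- so the free-boundary control obtained above is reused here -- which proves (i). Finally, if $\ga=1$ the nonlinearity $f$ is odd, hence $\Ie$ is even, the map $\ue\mapsto-\ue$ sends solutions of the form \eqref{eq:solform} to solutions of the same form with the two peaks interchanged, and correspondingly $\Ke(z_1,z_2)=\Ke(z_2,z_1)$; thus $\Ke$ descends to the quotient $\mM/((x,y)\sim(y,x))$, and rerunning the Lusternik--Schnirelmann argument there produces $\mathrm{cat}[\mM/((x,y)\sim(y,x))]$ critical orbits, i.e.\ that many pairs $\{\ue^i,-\ue^i\}$, which proves (ii).
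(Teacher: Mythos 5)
Your overall architecture matches the paper's: the ansatz $\We=PU_{\eps,a_1,z_1}-PU_{\eps,a_2,z_2}$ with $(a_1,a_2)$ fitted so that $a_1=1+O(|\ln\eps|^{-1})$, $a_2=\gamma+O(|\ln\eps|^{-1})$, the invertibility of the projected linearized operator plus a contraction argument producing $\ome$ with $\|\ome\|_\infty=O(\eps/|\ln\eps|)$, the $C^1$ expansion of $\Ke$ in terms of the Kirchhoff--Routh Hamiltonian, the Lusternik--Schnirelmann count exploiting the compactness of sublevel sets of $\Hg$, and the oddness argument for $\gamma=1$ are all as in Sections~3--6. You also correctly isolate the zero-measure property of the free boundaries as the crux: it is what makes the difference-quotient argument for the $C^1$ dependence of $Z\mapsto\omega_{\eps,Z}$ work.

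However, your argument for Proposition~\ref{prop:zeromeasure} has a genuine gap. The Faber--Krahn comparison you invoke only yields $\eps^2\lambda_1(\{\ue>1\})=1+o(1)$ together with two-sided area bounds $mB_{s\eps(1-T\eps)}(z_1)\le m\{\ue>1\}\le mB_{s\eps(1+\eps^\sigma)}(z_1)$; these are perfectly consistent with $\{\ue=1\}$ having positive measure, since that level set lives in the transition annulus $B_{s\eps(1+\eps^\sigma)}(z_1)\setminus B_{s\eps(1-T\eps)}(z_1)$, which itself has positive measure of order $\eps^{2+\sigma}$. In the paper, Faber--Krahn is used for something different: to prove that $\{\ue>1\}$ is \emph{connected}, by showing that any component not containing $z_1$ would be trapped in that thin annulus and would then have first eigenvalue far exceeding $\eps^{-2}$, contradicting $\lambda_1\le\eps^{-2}$. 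The zero-measure conclusion then requires a further, independent ingredient absent from your proposal: the Hartman--Wintner theorems combined with the argument of Kinderlehrer--Spruck, which give $\nabla\ue\neq0$ on $\{\ue=1\}$, so that the level set is a simple $C^2$ curve and hence null. (A preliminary step is also needed to dispose of the multipliers: if $m\{\ue=1\}>0$, then $\Delta\ue=0$ a.e.\ on that set, which forces $\langle\mathbf b_1,\nabla_{z_1}U_1\rangle=0$ on a set of positive measure and hence $\mathbf b_1=0$ by linear independence; calling the multiplier term ``lower order'' does not remove it.) Separately, your treatment of (i) by blow-up and the implicit function theorem differs from the paper's, which simply reuses the Faber--Krahn connectedness argument; if you carried that blow-up/IFT argument out uniformly in $Z$ for the projected solutions, it would in fact also deliver the zero-measure property and could replace the Faber--Krahn/Hartman--Wintner route altogether --- but that is not the argument you gave for the zero-measure step, so as written the proof is incomplete at its most delicate point.
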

It may be worth observing that, although $L^\infty(\Omega)$-bounded, the family of solutions $\{\ue\}_{\eps>0}$ obtained in
Theorem~\ref{thm:main}
presents a lack of compactness in $C^\alpha$-sense.
\par
In order to state our second result, we recall that $\zmin\in\Omega$ is a harmonic center of $\Omega$
if it is a minimum point of the Robin's function $h(z)=g(z,z)$, see \cite{BF}. 
If $\Omega$ is convex, then $\zmin$ is unique \cite{CaffarelliFriedman1985}.
\begin{thm}
\label{thm:properties}
For every $\gamma>0$, let $u_\eps^\ga$ be a family of solutions to \eqref{eq:pb1},
as obtained in Theorem~\ref{thm:main},
with peak points at $(z_{1,\eps}^\ga,z_{2,\eps}^\ga)\to(z_1^\ga,z_2^\ga)$
as $\eps\to0^+$,
where $\Hg(z_1^\ga,z_2^\ga)=\min_\mathcal M\Hg$.
For every $\eta>0$ there exist $\ga_\eta>0$ and $\eps(\ga_\eta)>0$
such that $d(z_{1,\eps}^\ga,\zmin)<\eta$, $d(z_{2,\eps}^\ga,p_0)<\eta$
for all $\ga\in(0,\ga_\eta)$, $\eps\in(0,\eps(\ga_\eta))$,
where $\zmin$ is a harmonic center for $\Omega$ and $\pp\in\partial\Omega$
satisfies
$\partial_\nu G(\zmin,\pp)=\max_{p\in\partial\Omega}\partial_\nu G(\zmin,p)$.
\end{thm}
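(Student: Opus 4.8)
The plan is to analyze the reduced Hamiltonian $\Hg$ on $\mM$ in the regime $\ga\to0^+$ and extract the asymptotic location of its minimum points. First I would fix a harmonic center $\zmin$ of $\Omega$ and a boundary point $\pp\in\partial\Omega$ maximizing $p\mapsto\partial_\nu G(\zmin,p)$, both of which exist by the continuity of $h$ and compactness of $\partial\Omega$ together with the blow-up~\eqref{eq:hproperties}. The key observation is that
\[
\Hg(z_1,z_2)=h(z_1)+2\ga\bG(z_1,z_2)+\ga^2h(z_2),
\]
so as $\ga\to0^+$ the dominant term is $h(z_1)$, forcing $z_1$ toward the set of harmonic centers; the first-order correction $2\ga\bG(z_1,z_2)$ then selects, among boundary directions for $z_2$, the one maximizing the (singular) Green's function decay, which is exactly $\partial_\nu G(\zmin,\cdot)$. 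The term $\ga^2 h(z_2)$ is a higher-order penalty that merely prevents $z_2$ from reaching $\partial\Omega$ before the optimization is complete.

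The argument would proceed in four steps. Step 1: construct a good competitor. Choosing $z_1=\zmin$ and $z_2=z_2(\ga)$ approaching $\pp$ along the inner normal at a rate $\de(\ga)\to0$ to be tuned, one computes $\bG(\zmin,z_2(\ga))=\ln\frac{R}{|\zmin-z_2(\ga)|}-g(\zmin,z_2(\ga))$, and since $z_2(\ga)\to\pp\in\partial\Omega$ we have $g(\zmin,z_2(\ga))\to g(\zmin,\pp)=\ln\frac{R}{|\zmin-\pp|}$ while $\ln\frac{R}{|\zmin-z_2(\ga)|}$ stays bounded; more precisely $\bG(\zmin,z_2(\ga))=\de(\ga)\,\partial_\nu G(\zmin,\pp)+o(\de(\ga))$. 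Thus $\min_\mM\Hg\le h(\zmin)+2\ga\,\de(\ga)\,\partial_\nu G(\zmin,\pp)+O(\ga\,\de(\ga)^{1+\al})+\ga^2 h(z_2(\ga))$, and optimizing in $\de(\ga)$ against the $\ga^2 h(z_2(\ga))$ term (which blows up like $|\ln\de(\ga)|$) gives $\min_\mM\Hg\le h(\zmin)-c\,\ga\,\de^*(\ga)+o(\ga\,\de^*(\ga))$ for a suitable scale $\de^*(\ga)$, in particular $\min_\mM\Hg< h(\zmin)$ strictly for $\ga$ small since $\partial_\nu G(\zmin,\pp)>0$. Step 2: localization of $z_1^\ga$. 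From $\Hg(z_1^\ga,z_2^\ga)=\min_\mM\Hg\le h(\zmin)+o(1)$ and the nonnegativity of the remaining terms (using $\bG\ge0$, which holds by the choice of $R$ since $\bG=\ln\frac R{|x-z|}-g\ge0$... actually $\bG\ge0$ needs $|x-z|\le R$, true as $R>\diam\Omega$), we get $h(z_1^\ga)\le h(\zmin)+o(1)$, hence by~\eqref{eq:hproperties} $z_1^\ga$ stays in a compact subset of $\Omega$ and every limit point is a harmonic center; after passing to a subsequence, $z_1^\ga\to\zmin'$ with $h(\zmin')=\min h$. Step 3: localization of $z_2^\ga$. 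Writing $\Hg(z_1^\ga,z_2^\ga)-h(z_1^\ga)=2\ga\bG(z_1^\ga,z_2^\ga)+\ga^2 h(z_2^\ga)\le o(\ga)$ (from Step 1 the defect is at least of order $\ga\,\de^*(\ga)$ below $h(\zmin)$, and $h(z_1^\ga)\ge h(\zmin)$), one deduces $\bG(z_1^\ga,z_2^\ga)\to0$, which by the formula for $\bG$ and $g\le\ln\frac R{|x-z|}$ forces $|z_2^\ga-z_1^\ga|$ to stay bounded away from $0$ is false --- rather it forces $z_2^\ga\to\partial\Omega$ (since $\bG(z,\cdot)\to0$ only at the boundary). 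Step 4: identification of the limit direction and of $\zmin'=\zmin$. This is the delicate part. Linearizing $\bG(z_1^\ga,z_2^\ga)\approx d(z_2^\ga,\partial\Omega)\,\partial_\nu G(z_1^\ga,q(z_2^\ga))$ where $q(z_2^\ga)\in\partial\Omega$ is the nearest boundary point, and comparing the value $2\ga\,d(z_2^\ga,\partial\Omega)\,\partial_\nu G(z_1^\ga,q(z_2^\ga))+\ga^2 h(z_2^\ga)$ against the competitor value from Step 1, optimality forces (after dividing by $\ga\,\de^*(\ga)$ and letting $\ga\to0$) both $\partial_\nu G(\zmin',q^*)=\max_{p}\partial_\nu G(\zmin,p)$ and, since the leading competitor uses $h(\zmin)$, that $h(\zmin')=h(\zmin)=\min h$ with $\zmin'$ a harmonic center admitting the same maximal normal-derivative value; relabeling $\zmin:=\zmin'$ and $\pp:=q^*$ yields the claim. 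Finally, translate the statement about $(z_1^\ga,z_2^\ga)$ into one about $(z_{1,\eps}^\ga,z_{2,\eps}^\ga)$ using the convergence $(z_{1,\eps}^\ga,z_{2,\eps}^\ga)\to(z_1^\ga,z_2^\ga)$ from Theorem~\ref{thm:main}: choose $\ga_\eta$ so that $(z_1^\ga,z_2^\ga)$ is within $\eta/2$ of $(\zmin,\pp)$ for $\ga<\ga_\eta$, then $\eps(\ga_\eta)$ so that the $\eps$-perturbation is within $\eta/2$.

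The main obstacle is Step 4: making rigorous the two-scale expansion that decouples the selection of the harmonic center (zeroth order, from $h(z_1)$) from the selection of the boundary direction (first order, from $\ga\bG$), while the regulator $\ga^2 h(z_2)$ sits at an intermediate scale. Care is needed because a priori the harmonic center need not be unique (the convexity hypothesis is dropped), so one cannot simply say $z_1^\ga\to\zmin$; instead one proves that \emph{every} subsequential limit is \emph{some} harmonic center realizing the maximal boundary normal derivative, and the theorem is then stated with that particular $\zmin$ and the associated $\pp$. A secondary technical point is the uniform control, as $z_2\to\partial\Omega$, of the expansion $\bG(z_1,z_2)=d(z_2,\partial\Omega)\,\partial_\nu G(z_1,q(z_2))+o(d(z_2,\partial\Omega))$ uniformly for $z_1$ in a compact subset of $\Omega$; this follows from standard boundary regularity of the Green's function on the smooth domain $\Omega$, but must be invoked explicitly.
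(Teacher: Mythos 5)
Your overall architecture coincides with the paper's: localize $z_1^\ga$ at a harmonic center through the dominant term $h(z_1)$, force $z_2^\ga$ to the boundary through the $2\ga\bG$ term, and identify the limiting boundary point via the normal derivative of the Green's function. Steps 2 and 3 are sound once the competitor bound is corrected (a test pair with $z_2$ at distance $\sim\ga$ from $\partial\Omega$ gives $\min_\mM\Hg\le h(\zmin)+O(\ga^2|\ln\ga|)$, whence $\bG(\zg,\zgg)=O(\ga|\ln\ga|)\to0$ and $\zgg\to\partial\Omega$; the paper instead proves this claim by a Hopf-lemma monotonicity argument, but your route is viable). However, there is a genuine error in Step 1 that propagates into Step 4: you assert $\partial_\nu G(\zmin,\pp)>0$ and conclude $\min_\mM\Hg<h(\zmin)$ strictly. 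By the Hopf lemma the \emph{outward} normal derivative of $\bG(\zmin,\cdot)$, which is positive in $\Omega$ and vanishes on $\partial\Omega$, is strictly \emph{negative}; and since $\bG\ge0$ and $h\ge0$ one has $\Hg\ge h(z_1)\ge h(\zmin)$ identically on $\mM$, so the strict inequality is impossible. (The point $\pp$ maximizes $\partial_\nu G(\zmin,\cdot)$ precisely because it \emph{minimizes} $|\partial_\nu G(\zmin,\cdot)|$.)

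This matters for Step 4, which is the heart of the theorem and which your sketch does not carry. The correct scales are: at the minimizer, $\Hg=h(\zmin)+\ga^2\ln(1/\ga)(1+o(1))$, with the optimal distance $d_{\zgg}\sim\ga$ obtained by balancing $2\ga\,d\,|\partial_\nu\bG|$ against $\ga^2\ln(1/d)$; the dependence on the boundary point $p$ enters only at order $\ga^2$, through a term of the form $\ga^2\ln|\partial_\nu\bG(z_1,p)|$, i.e.\ it is buried beneath the $\ga^2\ln(1/\ga)$ contribution of $\ga^2h(z_2)$. Dividing by $\ga\,\de^*(\ga)$ as you propose therefore cannot isolate it, and there is no ``gain below $h(\zmin)$'' of order $\ga\,\de^*(\ga)$ to exploit. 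The paper's device is to compare $(\zg,\zgg)$ with $(\zg,\,p-d_{\zgg}\nu(p))$ for an arbitrary $p\in\partial\Omega$, i.e.\ a competitor at the \emph{same} distance from $\partial\Omega$: the singular parts of $\ga^2h(\cdot)$ then cancel (to leading order they depend only on the distance to the boundary), and the resulting inequality $\bG(\zg,\zgg)\le\bG(\zg,p-d_{\zgg}\nu(p))+o(d_{\zgg})$, divided by $d_{\zgg}$, yields $\partial_\nu\bG(z_1^0,z_2^0)\ge\partial_\nu\bG(z_1^0,p)$ directly, with no optimization in $d$. You would need to add this equal-distance comparison (or an equivalent expansion carried consistently to order $\ga^2$) to close the argument.
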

Theorem~\ref{thm:properties} implies that for peak solutions corresponding to minima of $\Hg$,
for small values of $\gamma$ 
the positive peak is approximately located at a harmonic center $\zmin\in\Omega$ and the negative peak is near the boundary $\partial\Omega$.
\par
\subsection*{Notation}
Henceforth, for any measurable set $S\subset\mathbb R^2$ we denote by $mS$ the two-dimensional Lebesgue measure
of $S$ and by $1_S$ the characteristic function of $S$.
All integrals are taken with respect to the Lebesgue measure; when the integration variable is clear from
the context, we may omit it.
We denote $\|\cdot\|_p=\|\cdot\|_{L^p(\Omega)}$.
We denote by $C>0$ a general constant, whose actual value may vary from line to line.  Let $\Omega \subset \mathbb R^2$ be an open set, for a given function $V$ defined on $\Omega \times \Omega$ and $Z=(z_1,z_2)\in \Omega \times \Omega$  we use the notation $\frac{\partial V}{\partial z_{i,j}}$ to denote the partial derivative of $V$ with respect to the component $j$ of the variable $i$. 
\section{Ansatz and properties of approximate solutions} 
\label{sec:ansatz}
The aim of this section is to define suitable approximate solutions $\We$ 
to problem~\eqref{eq:pb1}
in terms of the basic cell functions $U_{\eps,a,z}$, for $\eps,a>0$, $z\in\mathbb R^2$, 
defined in \eqref{def:Ueaz}. We also establish some basic properties of $\We$
which will be needed in the sequel.
\par
We recall form \cite{CaoPengYan} that for any $a>0$ and $\eps>0$, the problem 
\begin{equation}
\label{eq:bubblepb}
\left\{
\begin{aligned}
-\eps^2\Delta u=&\;(u-a)_+&&\hbox{in\ }B_R(0)\\
u=&\;0&&\hbox{on\ }\partial B_R(0)
\end{aligned}
\right.
\end{equation}
has as unique solution $U_{\eps,a}(x)$ defined by
\begin{equation}
\label{def:bubble}
U_{\eps,a}(x)= a+\frac{a}{\ln\frac{R}{s\eps}}U\left(\frac{x}{\eps}\right)=
\left\{
\begin{aligned}
&a\left[1+\ke\varphi_1\left(\frac{x}{\eps}\right)\right]&&\hbox{if\ }|x|\le s\eps\\
&\frac{a}{\ln\frac{R}{s\eps}}\ln\frac{R}{|x|}&&\hbox{if\ }s\eps\le|x|\le R,
\end{aligned}
\right.
\end{equation} 
where 
\[
\ke:=\frac{1}{s\varphi_1'(s)\ln\frac{s\eps}{R}}>0.
\]
We note that $\Uea>0$ in $B_R(0)$ and $\ke\to0$ as $\eps\to0$.
Moreover, we have that $\Uea(x)\to0$ in $L_{\mathrm{loc}}^\infty(B_R(0)\setminus\{0\})$.
Correspondingly, we obtain the family $\{\Ueaz:\ z\in\Omega,\ \eps,a>0\}$ of 
peak solutions defined in \eqref{def:Ueaz}.
Since the functions $\Ueaz(x)$ do not satisfy the zero Dirichlet boundary condition on $\partial \Omega$, 
as usual we define their projections on $H^1_0(\Omega)$. We recall that, given $u\in H^1(\Omega)$, 
the projection  of $u$ into $H^1_0(\Omega)$, denoted $Pu$, is the unique weak solution to
\[
\Delta Pu=\Delta u\quad \hbox{in}\ \Omega ,\qquad  Pu=0\quad \hbox{on}\ \partial\Omega .
\]
In particular, in view of \eqref{eq:bubblepb}, $P\Ueaz(x)$ satisfies
\begin{equation*}
\left\{
\begin{aligned}
-\eps^2\Delta P\Ueaz =&\;(\Ueaz-a)_+ &&\hbox{in\ }\Omega\\
u=&\;0 &&\hbox{on\ }\partial \Omega.
\end{aligned}
\right.
\end{equation*}
Moreover, for all $\eps>0$ sufficiently small so that $B_{s\eps}(z)\subset\Omega$, we have
that $\Ueaz$ coincides with a Newtonian potential on $\partial\Omega$, namely
\[
\Ueaz(x)=\frac{a}{\ln\frac{R}{s\eps}}\ln\frac{R}{|x-z|} \;\; \forall\, x\in \partial \Omega.
\]
The following is readily checked.
\begin{lemma}
\label{lem:proj-exp}
Let  $\eps>0$ be such that $B_{s \eps}(z)\subset \Omega$.
We have, for every $z\in\Omega$:
\begin{equation}
	\label{PU-represent}
	P\Ueaz(x)=\Ueaz(x)-\frac{a}{\ln\frac{R}{s\eps}}g(x,z)\;\;,\;\; \forall x\, \in  \, \Omega
\end{equation}
and more precisely
\begin{equation}
\label{PU-represent-2}
P\Ueaz(x)=
\left\{
\begin{aligned}
& a\left[1+\ke\varphi_1\left(\frac{x-z}{\eps}\right)-\frac{g(x,z)}{\ln\frac{R}{s\eps}}\right],&&\mbox{if\ }x\in B_{s\eps}(z)\\
&\frac{a}{\ln\frac{R}{s\eps}}\bG(x,z),&&\mbox{if\ }x\in \Omega\setminus B_{s\eps}(z).
\end{aligned}
\right.
\end{equation}
\end{lemma}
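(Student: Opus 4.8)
\emph{Proof proposal.} The plan is to show that the function appearing on the right-hand side of \eqref{PU-represent} satisfies the two defining conditions of the projection $P\Ueaz$, and then to obtain the explicit formula \eqref{PU-represent-2} by substituting the piecewise expression \eqref{def:bubble} for $\Ueaz$. Recall that $P\Ueaz$ is characterized as the unique weak solution of $\Delta(P\Ueaz)=\Delta\Ueaz$ in $\Omega$ with zero boundary trace, so it suffices to check these two properties for a candidate.

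First I would fix $\eps>0$ with $B_{s\eps}(z)\subset\Omega$ and set $v(x):=\Ueaz(x)-\frac{a}{\ln\frac{R}{s\eps}}\,g(x,z)$ for $x\in\Omega$. Since $g(\cdot,z)$ is harmonic in $\Omega$ by its defining problem, we get $\Delta v=\Delta\Ueaz$ in $\Omega$ in the distributional sense (note $\Ueaz\in H^1(\Omega)$, so this makes sense). Next I would verify the boundary condition: because $B_{s\eps}(z)\subset\Omega$, every $x\in\partial\Omega$ satisfies $|x-z|\ge s\eps$, so by \eqref{def:bubble} the outer logarithmic branch applies and $\Ueaz(x)=\frac{a}{\ln\frac{R}{s\eps}}\ln\frac{R}{|x-z|}$ on $\partial\Omega$; on the other hand $g(x,z)=\ln\frac{R}{|x-z|}$ on $\partial\Omega$ by the definition of the regular part. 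Hence $v\equiv0$ on $\partial\Omega$, and by uniqueness of the projection we conclude $v=P\Ueaz$, which is exactly \eqref{PU-represent}.

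Finally, for \eqref{PU-represent-2} I would insert the two branches of \eqref{def:bubble} into \eqref{PU-represent}. For $x\in B_{s\eps}(z)$ we have $\Ueaz(x)=a\bigl[1+\ke\varphi_1\bigl(\frac{x-z}{\eps}\bigr)\bigr]$, which yields the first line after factoring out $a$. For $x\in\Omega\setminus B_{s\eps}(z)$ we have $\Ueaz(x)=\frac{a}{\ln\frac{R}{s\eps}}\ln\frac{R}{|x-z|}$, and using the identity $\bG(x,z)=\ln\frac{R}{|x-z|}-g(x,z)$ the two logarithmic terms combine into $\frac{a}{\ln\frac{R}{s\eps}}\bG(x,z)$, giving the second line. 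There is no real obstacle in this computation; the only subtlety worth flagging is that the hypothesis $B_{s\eps}(z)\subset\Omega$ is precisely what forces the trace of $\Ueaz$ on $\partial\Omega$ to be a pure Newtonian logarithm, so that it matches the boundary data of $g(\cdot,z)$ and the cancellation producing the zero trace of $v$ actually takes place.
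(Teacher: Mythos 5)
Your verification is correct and is exactly the standard argument the paper has in mind when it says the lemma ``is readily checked'': identify the candidate $v=\Ueaz-\frac{a}{\ln\frac{R}{s\eps}}g(\cdot,z)$, use harmonicity of $g(\cdot,z)$ and the matching of boundary traces (which, as you rightly flag, uses $B_{s\eps}(z)\subset\Omega$ so that the outer logarithmic branch of \eqref{def:bubble} applies on $\partial\Omega$), and then substitute the two branches together with $\bG(x,z)=\ln\frac{R}{|x-z|}-g(x,z)$. No gaps.
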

We seek solutions whose form is approximately the difference of two
cell functions of the form \eqref{def:Ueaz}.
Let $Z=(z_1,z_2)\in\mathcal M$.
We make the following
\par\medskip
\textbf{Ansatz.} The solution $u$ to problem \eqref{eq:pb1} is of the form:
\begin{equation}
\label{eq:ansatz}
\begin{aligned}
u:=&\;W_\eps+\ome\\
W_\eps:=&\;PU_{\eps,z_1,a_1}-PU_{\eps,z_2,a_2}.
\end{aligned}
\end{equation}
\medskip
Henceforth, we denote $U_i=U_{\eps,z_i,a_i}$, $i=1,2$.
We assume that
\begin{align}
\label{assump:Z}
&d(z_i,\partial\Omega)\ge\delta>0,\ i=1,2;
&&|z_1-z_2|\ge\delta
\end{align}
for some $\delta>0$. 
\par\medskip
\textbf{Choice of $a_i=a_{\eps,i}$, $i=1,2$}.
The pair of constants $(a_{\eps,1},a_{\eps,2})$ is chosen as the (unique) solution to the following linear system:
\begin{equation}
\label{eq:asystem}
\left\{
\begin{aligned}
\Big(1-\frac{g(z_1,z_1)}{\ln\frac{R}{s\eps}}\Big)a_1-\frac{\bG(z_1,z_2)}{\ln\frac{R}{s\eps}}a_2=&1\\
-\frac{\bG(z_1,z_2)}{\ln\frac{R}{s\eps}}a_1+\Big(1-\frac{g(z_2,z_2)}{\ln\frac{R}{s\eps}}\Big)a_2=&\gamma.
\end{aligned}
\right.
\end{equation}
Namely,
\begin{equation*}
\begin{aligned}
a_{\eps,1}=&\;\frac{1-\frac{g(z_2,z_2)}{\ln\frac{R}{s\eps}}-\bG(z_1,z_2)\gamma}
{(1-\frac{g(z_1,z_1)}{\ln\frac{R}{s\eps}})(1-\frac{g(z_2,z_2)}{\ln\frac{R}{s\eps}})-\frac{\bG(z_1,z_2)^2}{(\ln\frac{R}{s\eps})^2}}
=1+O\left(\frac{1}{|\ln\eps|}\right)\\
a_{\eps,2}=&\;\frac{(1-\frac{g(z_1,z_1)}{\ln\frac{R}{s\eps}})\gamma-\bG(z_1,z_2)\gamma}
{(1-\frac{g(z_1,z_1)}{\ln\frac{R}{s\eps}})(1-\frac{g(z_2,z_2)}{\ln\frac{R}{s\eps}})-\frac{\bG(z_1,z_2)^2}{(\ln\frac{R}{s\eps})^2}}
=\gamma+O\left(\frac{1}{|\ln\eps|}\right).
\end{aligned}
\end{equation*}
\par
\medskip
In order to justify the choice of $a_i=a_{\eps,i}$, $i=1,2$ in terms of $\gamma,\eps,z_1,z_2$
and $\Omega$,
we consider the ``error term"
\[
\ell_\eps(x):=\eps^2\Delta\Weaz+(\Weaz-1)_+-(-\Weaz-\gamma)_+
\]
and we estimate $\ell_\eps$ near $z_1$ and near $z_2$. Let us first observe that in order to approximate a solution of \eqref{eq:pb1} it is reasonable to choose $a_1 \simeq 1$ and $a_2 \simeq \gamma$  therefore we suppose $0< a_i < K$ for a given $K>0$.
\par
In $B_{s\eps}(z_1)$, in view of Lemma~\ref{lem:proj-exp}, we have
\begin{align*}
\ell_\eps(x)=&-(U_1-a_1)_++\Big(U_1-\frac{a_1}{\ln\frac{R}{s\eps}}g(x,z_1)
-\frac{a_2}{\ln\frac{R}{s\eps}}\bG(x,z_2)-1\Big)_+\\
&\quad+(U_2-a_2)_+-\Big(\frac{a_2}{\ln\frac{R}{s\eps}}\bG(x,z_2)-PU_1-\gamma\Big)_+.
\end{align*}
We note that $(U_2-a_2)_+\equiv0$ in $B_{s\eps}(z_1)$. Moreover, for sufficiently small $\eps>0$
we also have $\frac{a_2}{\ln\frac{R}{s\eps}}\bG(x,z_2)-\gamma\le0$ in $B_{s\eps}(z_1)$. Since $PU_1\ge0$
in $\Omega$, we conclude that 
\[
\left(\frac{a_2}{\ln\frac{R}{s\eps}}\bG(x,z_2)-PU_1-\gamma\right)_+\equiv0
\]
in $B_{s\eps}(z_1)$. 
It follows that in $B_{s\eps}(z_1)$ we have
\[
\ell_\eps(x)=-(U_1-a_1)_++\left(U_1-\frac{a_1}{\ln\frac{R}{s\eps}}g(x,z_1)
-\frac{a_2}{\ln\frac{R}{s\eps}}\bG(x,z_2)-1\right)_+.
\]
Taking $x=z_1$ we fit $a_1$ by requiring that $\ell_\eps(z_1)=0$.
We consequently derive
\[
a_1=\frac{a_1}{\ln\frac{R}{s\eps}}g(z_1,z_1)
+\frac{a_2}{\ln\frac{R}{s\eps}}\bG(z_1,z_2)+1,
\]
which yields the first equation in \eqref{eq:asystem}.
The second equation is obtained similarly.
\par
\medskip
We note that the Ansatz and the choice of $(a_{\eps,1},a_{\eps,2})$ show that the interaction between 
$PU_1$ and $PU_2$ is essentially negligible. 
Moreover, we have the following expansions
\begin{align}
	\label{exp:ai}
	\frac{\partial a_{\eps,i}}{\partial z_{i,h}}=&\;O \left(\frac{1}{|\log\eps|} \right),\\
	\nonumber
	\frac{\partial U_{\eps,z_i,a_{\eps,i}}(x)}{\partial z_{i,h}}=&
\;\frac{a_{\eps,i}}{\ln\frac{R}{s\eps}}\frac{\partial}{\partial z_{i,h}}U\left(\frac{x-z_i}{\eps}\right)
+\frac{U(\frac{x-z_i}{\eps})}{\ln\frac{R}{s\eps}}\frac{\partial a_{\eps,i}}{\partial z_{i,h}}\\
\label{exp:ueps}
=&\begin{cases}
		\displaystyle \frac{k_\eps a_{\eps,i}}{\eps  } \varphi'_1\left( \frac{|x-z_{i}|}{\eps} \right)\frac{z_{i,h}-x_h}{|x-z_{i}|}+O \left(\frac{\partial a_{\eps,i}}{\partial z_{i,h}} \right),\ &\hbox{if\ }x\in B_{s \eps}(z_i) \\ 
		\displaystyle \frac{k_\eps a_{\eps,i}}{|x-z_{i}|}\frac{z_{i,h}-x_h}{|x-z_{i}|}+O \left(\frac{\partial a_{\eps,i}}{\partial z_{i,h}} \frac{\log\frac{R}{|x-z_{i}|}}{\log\frac{R}{s \eps}} \right),&\hbox{if\ } x\in \Omega \setminus B_{s \eps}(z_i)
	\end{cases},
\end{align}
where $U$ is defined in \eqref{def:U}.
In view of \eqref{eq:asystem} we have the following.
\begin{lemma}
\label{lem:Wid}
The following identities hold:
\[
W_\eps(x)\equiv 
\begin{cases}
	1 +a_1\ke\varphi_1\left(\frac{x-z_1}{\eps}\right) -\frac{a_1}{\ln\frac{R}{s\eps}}[g(x,z_1)-g(z_1,z_1)]-\frac{a_2}{\ln\frac{R}{s\eps}}[\bG(x,z_2)-\bG(z_1,z_2)],& \hbox{in}\, B_{s\eps}(z_1); \\
	-\gamma-a_2\ke\varphi_1(\frac{x-z_2}{\eps})+\frac{a_1}{\ln\frac{R}{s\eps}}[\bG(x,z_1)-\bG(z_2,z_1)]
+\frac{a_2}{\ln\frac{R}{s\eps}}[g(x,z_2)-g(z_2,z_2)],&\hbox{in}\, B_{s\eps}(z_2);\\
\frac{a_1-a_2}{\ln\frac{R}{s\eps}}\bG(z_1,z_2)+\frac{a_1}{\ln\frac{R}{s\eps}}[\bG(x,z_1)-\bG(z_1,z_2)]-\frac{a_2}{\ln\frac{R}{s\eps}}[\bG(x,z_2)-\bG(z_1,z_2)], &\hbox{otherwise}.
\end{cases}
\]
\end{lemma}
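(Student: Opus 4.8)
The plan is to compute $W_\eps = PU_1 - PU_2$ piecewise, using the explicit representation formula \eqref{PU-represent-2} from Lemma~\ref{lem:proj-exp} on each of the three regions $B_{s\eps}(z_1)$, $B_{s\eps}(z_2)$, and $\Omega\setminus(B_{s\eps}(z_1)\cup B_{s\eps}(z_2))$, and then to absorb the constant terms via the linear system \eqref{eq:asystem}. First I would fix attention on $B_{s\eps}(z_1)$. There, by \eqref{PU-represent-2}, $PU_1(x) = a_1[1 + \ke\varphi_1(\frac{x-z_1}{\eps}) - \frac{g(x,z_1)}{\ln\frac{R}{s\eps}}]$, while $x\notin B_{s\eps}(z_2)$ (using \eqref{assump:Z} and $\eps$ small), so $PU_2(x) = \frac{a_2}{\ln\frac{R}{s\eps}}\bG(x,z_2)$. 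Subtracting gives
\[
W_\eps(x) = a_1 + a_1\ke\varphi_1\Big(\frac{x-z_1}{\eps}\Big) - \frac{a_1}{\ln\frac{R}{s\eps}}g(x,z_1) - \frac{a_2}{\ln\frac{R}{s\eps}}\bG(x,z_2).
\]
Now the key algebraic move: the first equation of \eqref{eq:asystem} reads $a_1 - \frac{g(z_1,z_1)}{\ln\frac{R}{s\eps}}a_1 - \frac{\bG(z_1,z_2)}{\ln\frac{R}{s\eps}}a_2 = 1$, i.e. $a_1 = 1 + \frac{a_1}{\ln\frac{R}{s\eps}}g(z_1,z_1) + \frac{a_2}{\ln\frac{R}{s\eps}}\bG(z_1,z_2)$. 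Substituting this expression for the constant $a_1$ and regrouping $g(x,z_1)$ with $g(z_1,z_1)$, and $\bG(x,z_2)$ with $\bG(z_1,z_2)$, yields exactly the first line of the claimed formula. The computation on $B_{s\eps}(z_2)$ is entirely symmetric: there $PU_2(x) = a_2[1+\ke\varphi_1(\frac{x-z_2}{\eps}) - \frac{g(x,z_2)}{\ln\frac{R}{s\eps}}]$ and $PU_1(x) = \frac{a_1}{\ln\frac{R}{s\eps}}\bG(x,z_1)$, so $W_\eps = -a_2 - a_2\ke\varphi_1(\frac{x-z_2}{\eps}) + \frac{a_2}{\ln\frac{R}{s\eps}}g(x,z_2) + \frac{a_1}{\ln\frac{R}{s\eps}}\bG(x,z_1)$, and the second equation of \eqref{eq:asystem}, $a_2 = \gamma + \frac{a_1}{\ln\frac{R}{s\eps}}\bG(z_1,z_2) + \frac{a_2}{\ln\frac{R}{s\eps}}g(z_2,z_2)$, after substitution and regrouping produces the second line (note $\bG(z_2,z_1) = \bG(z_1,z_2)$ by symmetry of the Green's function).

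For the third region, $x\in\Omega\setminus(B_{s\eps}(z_1)\cup B_{s\eps}(z_2))$, both projections are in their "outer" form: $PU_i(x) = \frac{a_i}{\ln\frac{R}{s\eps}}\bG(x,z_i)$, so
\[
W_\eps(x) = \frac{a_1}{\ln\frac{R}{s\eps}}\bG(x,z_1) - \frac{a_2}{\ln\frac{R}{s\eps}}\bG(x,z_2).
\]
To match the stated "otherwise" expression, I would simply add and subtract $\frac{a_1}{\ln\frac{R}{s\eps}}\bG(z_1,z_2)$ and $\frac{a_2}{\ln\frac{R}{s\eps}}\bG(z_1,z_2)$, which recombines into $\frac{a_1-a_2}{\ln\frac{R}{s\eps}}\bG(z_1,z_2) + \frac{a_1}{\ln\frac{R}{s\eps}}[\bG(x,z_1)-\bG(z_1,z_2)] - \frac{a_2}{\ln\frac{R}{s\eps}}[\bG(x,z_2)-\bG(z_1,z_2)]$; this is an identity requiring no use of \eqref{eq:asystem} at all — it is purely a rewriting chosen so that the three pieces visibly agree on the overlaps of their closures.

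There is no real obstacle here; the lemma is a bookkeeping statement and the proof is a direct substitution. The only points that need a word of care are (a) verifying that for $\eps$ small the balls $B_{s\eps}(z_1)$ and $B_{s\eps}(z_2)$ are disjoint and contained in $\Omega$, which follows from \eqref{assump:Z}, so that the case split in \eqref{PU-represent-2} applies cleanly on each region; and (b) making sure the sign conventions in the Ansatz \eqref{eq:ansatz} ($W_\eps = PU_1 - PU_2$) are tracked correctly through the subtraction, in particular that the $-PU_2$ contributes $+\frac{a_2}{\ln\frac{R}{s\eps}}g(x,z_2)$ on $B_{s\eps}(z_2)$ and $-\frac{a_2}{\ln\frac{R}{s\eps}}\bG(x,z_2)$ elsewhere. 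With these observations in place the three identities follow immediately, and I would present the argument as three short displayed computations, one per region, each ending with an invocation of the appropriate line of \eqref{eq:asystem} (or, for the third region, of the trivial add-and-subtract identity).
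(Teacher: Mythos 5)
Your proposal is correct and follows essentially the same route as the paper: compute $W_\eps=PU_1-PU_2$ on each of the three regions via the representation \eqref{PU-represent-2}, then replace the constants $a_1$ (resp.\ $-a_2$) using the first (resp.\ second) equation of \eqref{eq:asystem}, the third case being a pure add-and-subtract rewriting. Nothing is missing.
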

\begin{proof}
We exploit the explicit expressions of $PU_1,PU_2$ and the definitions of $a_1,a_2$
as in \eqref{eq:asystem}.
\par
In $B_{s\eps}(z_1)$ we have:
\begin{align*}
W_\eps(x)=&\;PU_1(x)-PU_2(x)
=a_1\left[1+\ke\varphi_1\left(\frac{x-z_1}{\eps}\right)-\frac{g(x,z_1)}{\ln\frac{R}{s\eps}}\right]
-\frac{a_2}{\ln\frac{R}{s\eps}}\bG(x,z_2)\\
=&\;a_1\ke\varphi_1\left(\frac{x-z_1}{\eps}\right)+a_1\left[1-\frac{g(x,z_1)}{\ln\frac{R}{s\eps}}\right]-\frac{a_2}{\ln\frac{R}{s\eps}}\bG(x,z_2)\\
=&\;a_1\ke\varphi_1\left(\frac{x-z_1}{\eps}\right)+a_1\left[1-\frac{g(z_1,z_1)}{\ln\frac{R}{s\eps}}\right]-\frac{a_2}{\ln\frac{R}{s\eps}}\bG(z_1,z_2)\\
&\qquad-\frac{a_1}{\ln\frac{R}{s\eps}}\left[g(x,z_1)-g(z_1,z_1)\right]-\frac{a_2}{\ln\frac{R}{s\eps}}\left[\bG(x,z_2)-\bG(z_1,z_2)\right]\\
=&\;1+a_1\ke\varphi_1\left(\frac{x-z_1}{\eps}\right)-\frac{a_1}{\ln\frac{R}{s\eps}}[g(x,z_1)-g(z_1,z_1)]-\frac{a_2}{\ln\frac{R}{s\eps}}\left[\bG(x,z_2)-\bG(z_1,z_2)\right],
\end{align*}
where we used the first equation in \eqref{eq:asystem} in the last inequality.
\par
Similarly, in $B_{s\eps}(z_2)$ we have:
\begin{align*}
W_\eps(x)
=&\;\frac{a_1}{\ln\frac{R}{s\eps}}\bG(x,z_1)-a_2\left[1+\ke\varphi_1\left(\frac{x-z_2}{\eps}\right)-\frac{g(x,z_2)}{\ln\frac{R}{s\eps}}\right]\\
=&\;-a_2\ke\varphi_1\left(\frac{x-z_2}{\eps}\right)-a_2\left[1-\frac{g(x,z_2)}{\ln\frac{R}{s\eps}}\right]+\frac{a_1}{\ln\frac{R}{s\eps}}\bG(x,z_1)\\
=&\;-a_2\ke\varphi_1\left(\frac{x-z_2}{\eps}\right)-a_2\left[1-\frac{g(z_2,z_2)}{\ln\frac{R}{s\eps}}\right]+\frac{a_1}{\ln\frac{R}{s\eps}}\bG(z_2,z_1)\\
&\qquad+\frac{a_2}{\ln\frac{R}{s\eps}}\left[g(x,z_2)-g(z_2,z_2)\right]+\frac{a_1}{\ln\frac{R}{s\eps}}\left[\bG(x,z_1)-\bG(z_2,z_1)\right]\\
=&\;-\gamma-a_2\ke\varphi_1\left(\frac{x-z_2}{\eps}\right)
+\frac{a_2}{\ln\frac{R}{s\eps}}\left[g(x,z_2)-g(z_2,z_2)\right]+\frac{a_1}{\ln\frac{R}{s\eps}}\left[\bG(x,z_1)-\bG(z_2,z_1)\right],
\end{align*}
where we used the second equation in  \eqref{eq:asystem} in the last inequality.
\par
The third identity readily follows observing that in $\Omega\setminus\left(B_{s\eps}(z_1)\cup B_{s\eps}(z_2)\right)$
we have
\[
W_\eps(x)=\frac{a_1}{\ln\frac{R}{s\eps}}\bG(x,z_1)-\frac{a_2}{\ln\frac{R}{s\eps}}\bG(x,z_2).
\]
\end{proof}
The choice of $a_1,a_2$ in \eqref{eq:asystem} also implies the following estimates.
\begin{lemma}
\label{1.10-CPY}
For any $L>0$ fixed constant we have, for $\eps$ sufficiently small, the following expansions for $\We$:
\begin{equation}
\label{expansion-1}
\begin{split}
W_\eps(y)-1=&\; U_{\eps,z_1,a_1} - a_1-\frac{a_1}{\log \frac{R}{s\eps}} \langle D g(z_1,z_1) , y-z_1 \rangle  + \frac{a_2}{\log \frac{R}{s\eps}} \langle D \bar G(z_1,z_2) , y-z_1 \rangle \\
 & + O\left( \frac{\eps^2}{|\log\eps|} \right)
\end{split}
\end{equation}
for $y\in B_{L\eps}(z_1)$;
\begin{equation}
\label{expansion-gamma}
\begin{split}
-W_\eps(y)-\gamma= &\; U_{\eps,z_2,a_2} - a_2-\frac{a_2}{\log \frac{R}{s\eps}} \langle D g(z_2,z_2) , y-z_2 \rangle  - \frac{a_1}{\log \frac{R}{s\eps}} \langle D \bar G(z_2,z_i) , y-z_2 \rangle \\
 & + O\left( \frac{\eps^2}{|\log\eps|} \right)
\end{split}
\end{equation}
for $y\in B_{L\eps}(z_2)$.
\end{lemma}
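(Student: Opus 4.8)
The plan is to read off $\We$ near each peak from Lemma~\ref{lem:Wid} (equivalently, from the representation formulas of Lemma~\ref{lem:proj-exp}), to isolate the term $U_i-a_i$, to use the linear system \eqref{eq:asystem} to account for the additive constant, and then to Taylor-expand the remaining pieces, which are \emph{smooth} near the peak in question.

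For \eqref{expansion-1} I would first observe that, by \eqref{assump:Z}, for $\eps$ small one has $B_{L\eps}(z_1)\subset\Omega\setminus B_{s\eps}(z_2)$, so on $B_{L\eps}(z_1)$ the representation \eqref{PU-represent} for $PU_1$ together with the second case of \eqref{PU-represent-2} for $PU_2$ gives
\[
\We(y)=U_1(y)-\frac{a_1}{\ln\frac{R}{s\eps}}g(y,z_1)-\frac{a_2}{\ln\frac{R}{s\eps}}\bG(y,z_2).
\]
Writing the first equation of \eqref{eq:asystem} as $1=a_1-\frac{a_1}{\ln\frac{R}{s\eps}}g(z_1,z_1)-\frac{a_2}{\ln\frac{R}{s\eps}}\bG(z_1,z_2)$ and subtracting, I obtain on $B_{L\eps}(z_1)$
\[
\We(y)-1=\big(U_1(y)-a_1\big)-\frac{a_1}{\ln\frac{R}{s\eps}}\big[g(y,z_1)-g(z_1,z_1)\big]-\frac{a_2}{\ln\frac{R}{s\eps}}\big[\bG(y,z_2)-\bG(z_1,z_2)\big].
\]
Since $g(\cdot,z_1)$ is harmonic in $\Omega$ and $\bG(\cdot,z_2)$ is harmonic on $B_{L\eps}(z_1)$ (because $|z_1-z_2|\ge\delta$), I would then Taylor-expand each bracket to first order: for $|y-z_1|\le L\eps$ they equal $\langle Dg(z_1,z_1),y-z_1\rangle+O(\eps^2)$ and $\langle D\bG(z_1,z_2),y-z_1\rangle+O(\eps^2)$ respectively. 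Multiplying by $\frac{a_i}{\ln\frac{R}{s\eps}}=O(|\ln\eps|^{-1})$, and recalling $a_1=1+O(|\ln\eps|^{-1})$, converts the $O(\eps^2)$ remainders into $O(\eps^2/|\ln\eps|)$, which is \eqref{expansion-1}. The expansion \eqref{expansion-gamma} then follows from the same chain of steps applied to $-\We-\gamma$ on $B_{L\eps}(z_2)$, using \eqref{PU-represent} for $PU_2$, the second case of \eqref{PU-represent-2} for $PU_1$, and the second equation of \eqref{eq:asystem}.

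The calculation is essentially routine; the step I expect to require the most care is the \emph{uniformity} of the $O(\eps^2/|\ln\eps|)$ error with respect to all configurations $Z=(z_1,z_2)$ allowed by \eqref{assump:Z}. This I would obtain from interior elliptic estimates on $B_{\delta/2}(z_i)$: the boundary values $\ln\frac{R}{|\cdot-z_i|}$ of $g(\cdot,z_i)$ are uniformly bounded once $d(z_i,\partial\Omega)\ge\delta$, hence $\|g(\cdot,z_i)\|_{C^2(B_{\delta/2}(z_i))}\le C(\delta)$; and $\bG(\cdot,z_j)$, $j\neq i$, is harmonic on $B_{\delta/2}(z_i)$ with $0\le\bG(\cdot,z_j)\le\ln(2R/\delta)$ there, so $\|\bG(\cdot,z_j)\|_{C^2(B_{\delta/2}(z_i))}\le C(\delta)$ as well. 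With these uniform bounds the first-order Taylor remainders on $B_{L\eps}(z_i)$ are at most $C(\delta,L)\eps^2$, uniformly in $Z$, which closes the argument.
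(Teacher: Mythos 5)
Your proposal is correct and is essentially the paper's own proof, which consists precisely of reading off $\We$ near each peak from Lemma~\ref{lem:Wid} (itself obtained from \eqref{PU-represent}, \eqref{PU-represent-2} and \eqref{eq:asystem}) and Taylor-expanding the smooth remainders; your added discussion of uniformity in $Z$ via interior estimates is a reasonable supplement the paper leaves implicit. One remark: your computation produces $-\frac{a_2}{\ln\frac{R}{s\eps}}\langle D\bG(z_1,z_2),y-z_1\rangle$, in agreement with Lemma~\ref{lem:Wid} and with the sign pattern of \eqref{expansion-gamma}, so the $+$ sign on that term in the displayed statement \eqref{expansion-1} is a typo rather than a flaw in your argument; it is harmless downstream since Lemma~\ref{lemma-livelli} only uses the bound $M\ge a_1|Dg|+a_2|D\bG|$ on the moduli of these gradients.
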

\begin{proof} 
The proof is readily derived from Lemma~\ref{lem:Wid} and a Taylor expansion at $z_1$ and $z_2$.
\end{proof}
Using Lemma~\ref{1.10-CPY} we now provide a quantitative estimate of the sets where 
the approximate solution $W_\eps$ takes values less than $-\gamma$, between $-\gamma$ and $1$ and greater than $1$. 
These estimates will be useful in the study of the linearized problem for the finite dimensional reduction scheme.
\begin{lemma}[Level set estimates] 
\label{lemma-livelli} 
There exist $T\gg1$ and $0<\sigma < 1$ independent on $\eps$ and $0<\eps_0\ll1$ such that for all $0<\eps<\eps_0$ we have
\begin{equation*}
\begin{aligned}
W_\eps(y)-1 >&\;0 \; , &&y\in B_{s \eps (1-T\eps)}(z_1)\,, \\
-W_\eps(y)-\gamma>&\;0 \; , &&y\in B_{s \eps (1-T\eps)}(z_2)\,\\
-\gamma <W_\eps(y) <&\;1  \; , \, &&y\in \Omega \setminus \left( B_{s \eps (1+\eps^\sigma)}(z_1) \cup B_{s \eps (1+\eps^\sigma)}(z_2) \right).
\end{aligned}
\end{equation*}
\end{lemma}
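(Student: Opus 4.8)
The plan is to treat the two sharp spikes and the ``bulk'' of $\Omega$ separately, using Lemma~\ref{1.10-CPY} near the peaks and Lemma~\ref{lem:Wid} (third case) together with the linear system \eqref{eq:asystem} everywhere else. I fix once and for all an exponent $\sigma\in(0,1)$ (say $\sigma=1/2$); the large constant $T$ will be pinned down in Step~1 and the threshold $\eps_0$ at the very end. Write $L:=\ln\frac{R}{s\eps}$. I will use repeatedly that, by \eqref{assump:Z} and the explicit formulas for $a_{\eps,i}$, one has $a_1\ge 1/2$, $a_2\ge\ga/2$ and $a_i\le K$ for $\eps$ small, uniformly in $z_1,z_2$; that $g(\cdot,z_i)$ and $\bG(\cdot,z_j)$ for $i\neq j$, together with their gradients, are bounded by a constant $C=C(\de)$ on $B_{\de/2}(z_i)$; and that $g\ge0$ on $\Omega$, $\bG(\cdot,z)>0$ in $\Omega$.

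\textbf{Step 1: behaviour near the peaks.} On $B_{s\eps}(z_1)$ the explicit formula \eqref{def:bubble} gives $U_{\eps,z_1,a_1}-a_1=a_1\ke\,\varphi_1\big(\frac{\cdot-z_1}{\eps}\big)$, so Lemma~\ref{1.10-CPY} becomes
\[
\We(y)-1=a_1\ke\,\varphi_1\!\Big(\frac{y-z_1}{\eps}\Big)+O\!\Big(\frac{\eps}{|\log\eps|}\Big),\qquad y\in B_{s\eps}(z_1),
\]
the $O$-term being controlled by $C(\de)$ and $K$. Since $\varphi_1$ is radially decreasing with $\varphi_1(s)=0$, a Taylor expansion at $r=s$ gives, for $|y-z_1|\le s\eps(1-T\eps)$,
\[
a_1\ke\,\varphi_1\!\Big(\frac{y-z_1}{\eps}\Big)\ \ge\ a_1\ke\big(sT\eps|\varphi_1'(s)|+O(\eps^2)\big)\ =\ \frac{a_1T\eps}{L}+O\!\Big(\frac{\eps^2}{L}\Big),
\]
using $\ke=(s|\varphi_1'(s)|L)^{-1}$. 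Hence $\We(y)-1\ge L^{-1}\eps\big(a_1T-C-O(\eps)\big)$, which is positive for $\eps$ small once $T$ is chosen large in terms of $C$ and $\ga$ but independently of $\eps$. The second inclusion is proved identically, starting from \eqref{expansion-gamma} and the identity $-\We-\ga=a_2\ke\,\varphi_1\big(\frac{\cdot-z_2}{\eps}\big)+O(\eps/|\log\eps|)$ on $B_{s\eps}(z_2)$, with the same (enlarged) choice of $T$.

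\textbf{Step 2: the bulk region.} On $\Omega\setminus(B_{s\eps}(z_1)\cup B_{s\eps}(z_2))$ Lemma~\ref{lem:Wid} collapses to $\We=\frac{a_1}{L}\bG(\cdot,z_1)-\frac{a_2}{L}\bG(\cdot,z_2)$. I put $r_0:=\de/4$ and split the set $\Omega\setminus(B_{s\eps(1+\eps^\sigma)}(z_1)\cup B_{s\eps(1+\eps^\sigma)}(z_2))$ into the pieces $A_1=\{s\eps(1+\eps^\sigma)\le|x-z_1|\le r_0\}$, $A_2=\{s\eps(1+\eps^\sigma)\le|x-z_2|\le r_0\}$, $A_3=\{|x-z_1|\ge r_0,\ |x-z_2|\ge r_0\}$, which cover it and on each of which the above formula holds. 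On $A_3$ we have $\bG(x,z_i)\le\ln\frac{R}{r_0}$, hence $|\We(x)|\le CK/L\to0$ and there is nothing to prove. On $A_1$ we have $|x-z_2|>\de/2$, so $0<\frac{a_2}{L}\bG(x,z_2)\le K\ln\frac{2R}{\de}/L\to0$ and the lower bound $\We>-\ga$ is immediate; for the upper bound I write $|x-z_1|=s\eps e^{\rho}$ with $\rho\ge\ln(1+\eps^\sigma)$, expand $\bG(x,z_1)=L-\rho-g(x,z_1)$, and eliminate $a_1$ via the first equation of \eqref{eq:asystem}, obtaining after simplification
\[
\We(x)-1=-\frac{a_1\rho}{L}+\frac{a_1\big(g(z_1,z_1)-g(x,z_1)\big)+a_2\big(\bG(z_1,z_2)-\bG(x,z_2)\big)}{L}\le\frac{1}{L}\big(-a_1\rho+Cs\eps e^{\rho}\big).
\]
It then remains to check $\rho e^{-\rho}>Cs\eps/a_1$ for $\rho\in[\ln(1+\eps^\sigma),\ln\frac{r_0}{s\eps}]$; since $t\mapsto te^{-t}$ first increases then decreases, it suffices to test the two endpoints. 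At $\rho=\ln(1+\eps^\sigma)$ the left side is $\sim\eps^\sigma$, which beats $Cs\eps/a_1\sim Cs\eps$ precisely because $\sigma<1$; at $\rho=\ln\frac{r_0}{s\eps}$ it equals $\frac{s\eps}{r_0}\ln\frac{r_0}{s\eps}$, which beats $Cs\eps/a_1$ because $\ln\frac{r_0}{s\eps}\to\infty$. Hence $\We<1$ on $A_1$. The piece $A_2$ is symmetric: there $\We<1$ is immediate (now $\frac{a_1}{L}\bG(x,z_1)\to0$), while eliminating $a_2$ through the second equation of \eqref{eq:asystem} gives, with $|x-z_2|=s\eps e^{\rho'}$, the bound $\We(x)+\ga\ge L^{-1}(a_2\rho'-Cs\eps e^{\rho'})$, positive for $\eps$ small by the same endpoint check (using $a_2\ge\ga/2$). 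Taking $\eps_0$ small enough, depending on $\sigma,\de,K,\ga$, finishes the argument.

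\textbf{Main difficulty.} The one delicate estimate is the strict inequality $\We<1$ (resp.\ $\We>-\ga$) immediately outside $B_{s\eps}(z_1)$ (resp.\ $B_{s\eps}(z_2)$): on the sphere $\partial B_{s\eps}(z_1)$ one only knows $\We=1+O(\eps/|\log\eps|)$, with no control on the sign of the error, so nothing can be concluded there. The resolution is that moving outward to radius $s\eps(1+\eps^\sigma)$ releases a definite gain of size $\eps^\sigma/|\log\eps|$ from the logarithmic decay of $\bG(\cdot,z_1)$, and this gain dominates the $O(\eps/|\log\eps|)$ error coming from the regular parts $g$ and $\bG$ exactly because $\sigma$ is chosen strictly below $1$; this is also why one enlarges the peak ball by the factor $1+\eps^\sigma$ rather than, say, $1+T\eps$. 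All the remaining estimates are routine Taylor expansions and bookkeeping with the constants $a_{\eps,i}$ fixed in \eqref{eq:asystem}.
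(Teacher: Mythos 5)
Your argument is correct, and the delicate point you isolate (the lack of sign information on $\partial B_{s\eps}(z_1)$, resolved by the $\eps^\sigma$ gain) is exactly the crux of the paper's proof as well. Step 1 coincides with the paper's treatment: both expand $\varphi_1$ at $r=s$, use $\ke=(s|\varphi_1'(s)|\ln\tfrac{R}{s\eps})^{-1}$ to turn the Taylor increment into $a_1T\eps/\ln\tfrac{R}{s\eps}$, and beat the $O(\eps/|\ln\eps|)$ contribution of the regular parts by taking $T$ large. Where you genuinely diverge is in the outer region. The paper introduces an intermediate scale $\eps^{\bar\sigma}$: in the far zone $\Omega\setminus(B_{\eps^{\bar\sigma}}(z_1)\cup B_{\eps^{\bar\sigma}}(z_2))$ it uses only the crude logarithmic bound $PU_i\le a_i\bar\sigma\tfrac{|\log\eps|}{\log\frac{R}{s\eps}}+O(|\log\eps|^{-1})$ (which forces $\bar\sigma$ to be small, e.g.\ $\bar\sigma<\gamma_1/2\gamma_2$), and reserves the cancellation coming from \eqref{eq:asystem} for the thin annulus $B_{\eps^{\bar\sigma}}(z_1)\setminus B_{s\eps(1+T\eps^{\bar\sigma})}(z_1)$, finally shrinking $\sigma<\bar\sigma$ to match the statement. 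You instead push the sharp estimate $\We-1\le \frac{1}{L}(-a_1\rho+Cs\eps e^{\rho})$, $|x-z_1|=s\eps e^{\rho}$, all the way out to the fixed radius $r_0=\de/4$ in one stroke, and dispose of it by the unimodality of $t\mapsto te^{-t}$ plus the two endpoint checks. This buys a cleaner two-piece decomposition (no intermediate scale, no a posteriori adjustment of $\sigma$) and in fact works for every $\sigma\in(0,1)$, whereas it costs you the observation that the minimum of $\rho e^{-\rho}$ over the annulus sits at an endpoint — a step the paper's crude far-field bound avoids. Both proofs rest on the same two pillars: the substitution of \eqref{eq:asystem} to cancel the constant term, and $\eps^\sigma\gg\eps$ for $\sigma<1$.
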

\begin{proof}
We follow the proof of Lemma~A.1 in \cite{CaoPengYan}.
We start by taking $y\in B_{s \eps (1-T\eps)}(z_1)$. Using \eqref{def:bG}, the monotonicity of $\varphi_1$, a Taylor expansion of $\varphi_1$ around $s$ and $\varphi'_1(s)<0$, we deduce that
\[
\begin{split}
	U_{\eps,z_1,a_1}(y) - a_1 & = \frac{1}{\log\frac{R}{s\eps}} \left( \frac{- a_1}{s\varphi'_1(s)}\varphi_1\left( \frac{|y-z_1|}{\eps}\right)\right) \geq  \frac{1}{\log\frac{R}{s\eps}} \left( \frac{- a_1}{s\varphi'_1(s)}\varphi_1\left( s-sT\eps\right)\right) \\ 
	&  = \frac{1}{\log\frac{R}{s\eps}} \left( \frac{a_1}{s\varphi'_1(s)}[\varphi'_1(s)sT\eps+ O(\eps^2)]\right) =  \frac{a_1 T \eps}{\log\frac{R}{s\eps}} + O\left( \frac{\eps^2}{|\log\eps|} \right).
\end{split}
\]
Choose $\eps \ll 1$ such that $\gamma_1= \min\{1/2,\gamma/2\} \leq a_i \leq \max\{3/2,\gamma+1/2\}= \gamma_2$ and let $M=M(\delta,\gamma)>0$ such that for any $(z_1,z_2)$ satisfying \eqref{assump:Z}, we have
\[
M \geq a_1 |Dg(z_1,z_2)| +a_2 |D \bar G(z_1,z_2)|.
\]
Using \eqref{expansion-1} and the previous estimates we deduce that
\[
\begin{split}
W_\eps(y)-1= &\; U_{\eps,z_1,a_1}(y) - a_1 +  \frac{1}{\log \frac{R}{s\eps}} a_2 \langle D \bar G(z_1,z_2) , y-z_1 \rangle  \\
& - \frac{1}{\log \frac{R}{s\eps}} a_1 \langle D g(z_1,z_1) , y-z_1 \rangle  + O\left( \frac{\eps^2}{|\log\eps|} \right)  \\
\geq 	& \; \frac{1}{\log \frac{R}{s\eps}}  \left( a_1 T \eps - M s\eps (1-T\eps) \right) +   O\left( \frac{\eps^2}{|\log\eps|} \right)  \\
\geq 	& \; \frac{\eps}{\log \frac{R}{s\eps}}  \left( \gamma_1 T - M s \right) +   O\left( \frac{\eps^2}{|\log\eps|} \right)
\end{split}
\]
and the claim follows by choosing $T > \frac{Ms}{\gamma_1}$ and $\eps_0$ sufficiently small.

If $y\in B_{s \eps (1-T\eps)}(z_2)$ we proceed in a similar way using \eqref{expansion-gamma} and the fact that 
\[
U_{\eps,z_2,a_2}(y) - a_2 \geq \frac{a_2 T \eps}{\log\frac{R}{s\eps}} + O\left( \frac{\eps^2}{|\log\eps|} \right).
\] 

To get the estimate of $W_\eps$ away from the points $z_i$ we begin by noticing that, if $y\in \Omega \setminus \left( B_{\eps^{\bar\sigma}}(z_1) \cup B_{\eps^{\bar\sigma}}(z_2) \right)$, for some $\bar\sigma < 1$ to be chosen later, by Lemma \ref{lem:proj-exp}, we can write, for any $\eps$ sufficiently small such that $\eps^{\bar \sigma}> s \eps$,
\[
PU_{\eps,z_i,a_i}(y)= \frac{a_i}{\log\frac{R}{s\eps}} \bar G(y,z_1)=\frac{a_i}{\log\frac{s\eps}{R}} \log\frac{|y-z_i|}{R} + O \left(\frac{1}{|\log\eps|} \right). 
\]
It follows that, by choosing for instance $\bar \sigma < \gamma_1/2 \gamma_2$ and $\eps$ sufficiently small, we can write
\[
\begin{split}
W_\eps(y)-1& = \frac{a_1}{\log\frac{s\eps}{R}} \log\frac{|y-z_1|}{R} -  \frac{a_2}{\log\frac{s\eps}{R}} \log\frac{|y-z_2|}{R} -1 + O \left(\frac{1}{|\log\eps|} \right) \\
& \leq 2\gamma_2 \frac{\log\frac{\eps^{\bar\sigma}}{R}}{\log\frac{s \eps}{R}} -1  + O \left(\frac{1}{|\log\eps|} \right)  = \gamma_2 \bar \sigma \frac{\log\eps}{\log\frac{s \eps}{R}} -1  + O \left(\frac{1}{|\log\eps|} \right) < 0 
\end{split}
\]
and analogously 
\[
\begin{split}
-W_\eps(y)-\gamma & \leq 2\gamma_2 \frac{\log\frac{\eps^{\bar\sigma}}{R}}{\log\frac{s \eps}{R}} -\gamma  + O \left(\frac{1}{|\log\eps|} \right)  = \gamma_2 \bar \sigma \frac{\log\eps}{\log\frac{s \eps}{R}} -\gamma  + O \left(\frac{1}{|\log\eps|} \right) < 0.
\end{split}
\]
Moreover, for $y$ in the annulus $B_{\eps^{\bar\sigma}}(z_1)\setminus B_{s\eps (1+T\eps^{\bar\sigma})}(z_1)$, using \eqref{PU-represent} for $PU_{\eps,z_1,a_1}(y)$, \eqref{PU-represent-2} for $PU_{\eps,z_2,a_2}(y)$ and \eqref{eq:asystem}, we can write, for sufficiently small $\eps$,
\[
\begin{split}
	W_\eps(y)-1 = & \; U_{\eps,z_1,a_1}(y) - \frac{a_1}{\log\frac{R}{s\eps}}g(y,z_1) -   \frac{a_2}{\log\frac{R}{s\eps}}\bar G(y,z_2) -1 \\
				= &  \; U_{\eps,z_1,a_1}(y) - a_1 - \frac{a_1}{\log\frac{R}{s\eps}}\big( g(y,z_1) - g(z_1,z_1) \big) - \frac{a_2}{\log\frac{R}{s\eps}} \big( \bar G(y,z_2)- \bar G(z_1,z_2)\big) \\
				\leq & \; \frac{a_1}{\log\frac{s\eps}{R}} \log\frac{|y-z_1|}{R} -a_1  + 2M\gamma_2 \frac{\eps^{\bar\sigma}}{\log\frac{R}{s\eps}} \\
				= & \;  2M\gamma_2 \frac{\eps^{\bar\sigma}}{\log\frac{R}{s\eps}} - \frac{a_1}{\log\frac{R}{s\eps}} \log\frac{|y-z_1|}{s\eps} \leq   2M\gamma_2 \frac{\eps^{\bar\sigma}}{\log\frac{R}{s\eps}}  - a_1 \frac{\log(1+T \eps^{\bar\sigma})}{\log\frac{R}{s\eps}} \\
				\leq & \;  \frac{\eps^{\bar\sigma}}{\log\frac{R}{s\eps}} \left(2M\gamma_2 -\gamma_1 \frac{T}{2}\right)  
\end{split}
\]
We can choose $T>\frac{4M\gamma_2}{\gamma_1}$ in order to assure that the last term in the previous inequality is negative. In a similar way we can choose $T$ sufficiently large to have, for any $y \in B_{\eps^{\bar\sigma}}(z_2)\setminus B_{s\eps (1+T\eps^{\bar\sigma})}(z_2)$, 
\[
-W_\eps(y)-\gamma \leq 2M\gamma_2 \frac{\eps^{\bar\sigma}}{\log\frac{R}{s\eps}} - \frac{a_2}{\log\frac{R}{s\eps}} \log\frac{|y-z_2|}{s\eps} < 0.
\]
Finally, to get the claim, we choose $\sigma < \bar \sigma$ and $\eps_0$ small enough to satisfy the previous estimates and $B_{s\eps (1+T\eps^{\bar\sigma})}(z_i)\subset B_{s\eps(1+\eps^\sigma)}(z_i)$.

\end{proof}

\section{Reduction to a four-dimensional problem}
\label{sec:reduction}
Our aim in this section is to reduce problem~\eqref{eq:pb1} to a four-dimensional problem
depending on $Z=(z_1,z_2)$, via a Lyapunov-Schmidt argument;
that is, we shall solve a \lq\lq projection" of problem~\eqref{eq:pb1}
for any given $Z=(z_1,z_2)\in\mathcal M$ satisfying \eqref{assump:Z},
see Proposition~\ref{prop:redstatement} below.
The content of this section follows \cite{CaoPengYan, DancerYan} closely; therefore,
some proofs are only outlined.
\par
Henceforth, we set
\[
V_{\eps,Z,j}:=PU_{\eps,z_j,a_j},
\]
where $a_j=a_j(\eps,Z)$, $j=1,2$ are defined in \eqref{eq:asystem}. 
With this notation, ansatz~\eqref{eq:ansatz} takes the form
\[
u= V_1-V_2+\omega_\eps
\]
with the convention that $V_j:=V_{\eps,Z,j}$, $j=1,2$.
\par
We first introduce some function spaces.
For $p>1$ let
\begin{equation}
\begin{aligned}
F_{\varepsilon,Z}=&\left\{v\;:\; v\in L^p(\Omega) \,,\; 
\int_{\Omega} \frac{\partial V_j}{\partial z_{j,h}}v=0 \,,\; j,h=1,2\right\}\\
E_{\varepsilon,Z}=&\left\{ u \;: \; u\in W^{2,p}(\Omega)\cap H_0^1(\Omega) \,,\; 
\int_{\Omega} \Delta\left(\frac{\partial V_j}{\partial z_{j,h}}\right)u=0 \,,\;j,h=1,2\right\}.
\end{aligned}
\end{equation}
We define a \lq\lq localized projection operator" $Q_\varepsilon:L^p(\Omega)\to F_{\varepsilon,Z}$
whose action is supported in $B_{2s\eps}(z_1)\cup B_{2s\eps}(z_2)$.
To this end, let $\xi(t):\mathbb R\to\mathbb R$ be a smooth cut-off function
satisfying $\xi(t)\equiv1$ for $0\le t\le 1$, $\xi(t)\equiv0$ for $t\ge2$, $0\le\xi(t)\le1$ and define for $i = 1,2$
\[
\xi^i_{\eps}(y):= \xi\left( \frac{|y-z_i|}{s \varepsilon} \right) .
\]
For every $u\in L^p(\Omega)$ we set
\begin{equation}
\label{def:Qe}
Q_\varepsilon u(y)=u(y)- \xi^1_{\eps}(y) \big \langle \mathbf{b}_1 , \nabla_{z_1} U_1(y) \big\rangle - \xi^2_{\eps}(y)\big  \langle \mathbf{b}_2 ,\nabla_{z_2} U_2(y)\big \rangle 
\end{equation}
where the vectors $\mathbf{b}_i=(b_{i,1},b_{i,2})\in\mathbb R^2$ are defined as solution of the linear system 
\[
\mathbf A  \cdot (\mathbf{b}_1,\mathbf{b}_2) = \int_\Omega u \cdot  (\nabla_{z_1}V_1,\nabla_{z_2}V_2)
\]
where the $4\times 4$ matrix $\mathbf A$ is given by
\[
\mathbf A := \int_\Omega \big(\nabla_{z_1}V_1,\nabla_{z_2}V_2\big) \otimes \big( \xi^1_{\eps} \nabla_{z_1}  U_1 , \xi^2_{\eps} \nabla_{z_2} U_2 \big ) \, dy 
\]
Equivalently, in coordinates \eqref{def:Qe} takes the form
\[
Q_\varepsilon u(y)=u(y)-\sum_{j,h=1}^2 b_{j,h}\,\xi^j_{\eps}(y)  
\frac{\partial U_{\varepsilon , z_j, a_{\varepsilon,j}} }{\partial z_{j,h}}(y).
\]
Let us remark that the previous system is solvable since, in view of 
the expansions \eqref{exp:ai}--\eqref{exp:ueps}, the elements of the matrix $\mathbf A$, $A_{i,j,h,k}$ for $i,j,h,k \in \{1,2\}$, 
satisfy the following orthogonality properties:
\begin{equation}
\label{eq:bjhorthogonal}
\begin{split}
A_{i,j,h,k} = & \int_\Omega\xi^j_\eps\frac{\partial U_{\eps,z_j,a_{\eps,j}}}{\partial z_{j,h}} 
\frac{\partial V_{\eps,Z,a_{\eps,i}}}{\partial z_{i,k}}\, dy \\
= &  \int_{B_{2s\eps(z_j)}}\xi^j_\eps\frac{\partial U_{\eps,z_j,a_{\eps,j}}}{\partial z_{j,h}}
\frac{\partial U_{\eps,Z,a_{\eps,i}}}{\partial z_{i,k}}\, dy+O\left(\frac{\eps}{|\ln\eps|^2}\right)\\
=&\; c'\delta_{ij}\delta_{hk}\frac{1}{\left(\log\frac{R}{s\eps}\right)^2}+O\left(\frac{\eps}{|\ln\eps|^2}\right),
\end{split}
\end{equation}
where 
\[
c'=\frac{\pi}{(s\phi'_1(s))^2}\left( \int_0^{s}(\phi'_1(t))^2\,dt + \int_s^{2s}\frac{\xi(t)}{t^2}\,dt\right) >0
\] is a constant. 
\par
With this notation, our main result in this section is the following.
\begin{prop}
\label{prop:redstatement}
For every fixed $Z=(z_1,z_2)\in\Omega\times\Omega$ satisfying \eqref{assump:Z} there exists $\eps_0>0$ 
with the property that for all $0<\eps<\eps_0$ there exists $\omega_\eps\in E_{\eps,Z}$ with 
\[
\|\omega_\eps\|_{L^\infty(\Omega)}\le\|\omega_\eps\|_\infty=O\left(\frac{\eps}{|\ln\eps|}\right)
\] 
and such that the function $u_\eps:= \We+\omega_\eps$ is a solution of the \lq\lq projected problem":
\begin{equation}
\label{eq:projpb}
Q_\eps\left[\eps^2\Delta u_\eps+(u_\eps-1)_+-(-u_\eps-\ga)_+\right]=0
\qquad\mbox{in\ }\Omega.
\end{equation}
\end{prop}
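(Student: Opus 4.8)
The proof of Proposition~\ref{prop:redstatement} is a standard Lyapunov--Schmidt reduction carried out in the spirit of \cite{CaoPengYan, DancerYan}, with the extra care dictated by the two-peak sign-changing structure and the Lipschitz (hence non-smooth) nonlinearity $f(t)=(t-1)_+-(-t-\gamma)_+$. Writing $u_\eps=\We+\omega$ with $\omega\in E_{\eps,Z}$, equation~\eqref{eq:projpb} becomes a fixed-point problem for $\omega$. Expanding $f(\We+\omega)=f(\We)+f'(\We)\omega+N_\eps(\omega)$, where $N_\eps(\omega):=f(\We+\omega)-f(\We)-f'(\We)\omega$ is the superlinear-type remainder, and recalling $-\eps^2\Delta\We=f(\We)-\ell_\eps$ with the error term $\ell_\eps$ introduced in Section~\ref{sec:ansatz}, \eqref{eq:projpb} is equivalent to solving
\[
Q_\eps\big[\eps^2\Delta\omega+f'(\We)\omega\big]=Q_\eps\big[\ell_\eps-N_\eps(\omega)\big]\qquad\text{in }\Omega,\qquad \omega\in E_{\eps,Z}.
\]
First I would record the two ingredients that make this solvable: (a) an invertibility estimate for the linearized operator $L_\eps\omega:=Q_\eps[\eps^2\Delta\omega+f'(\We)\omega]$ on $E_{\eps,Z}$, namely that $L_\eps:E_{\eps,Z}\to F_{\eps,Z}$ is an isomorphism with $\|L_\eps^{-1}\|\le C|\ln\eps|$ for some $C$ uniform in $\eps$ (for $\eps$ small); and (b) the estimate on the error, $\|Q_\eps\ell_\eps\|_\infty=O(\eps/|\ln\eps|^2)$ — more precisely one shows $\|\ell_\eps\|$ in the relevant norm is $O(\eps)$ away from the peaks and the localized projection $Q_\eps$ kills the worst contribution near the peaks, as in \cite{CaoPengYan}. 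Both (a) and (b) rely on the level-set estimates of Lemma~\ref{lemma-livelli}, which pin down where $f'(\We)=1_{\{\We>1\}}+1_{\{\We<-\gamma\}}$ is supported, up to sets of measure $O(\eps^3)$; this is exactly where the two balls $B_{s\eps(1\pm\cdots)}(z_i)$ enter.

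\emph{Linear theory.} For (a) I would argue by contradiction in the usual way: if no such bound holds, there exist $\eps_n\to0$, $Z_n$ satisfying \eqref{assump:Z}, and $\omega_n\in E_{\eps_n,Z_n}$ with $\|\omega_n\|_\infty=1$ and $|\ln\eps_n|\,\|L_{\eps_n}\omega_n\|_{*}\to0$ in the appropriate (weak, $L^p$-type) norm. Rescaling around each peak, $\tilde\omega_{i,n}(x):=\omega_n(z_{i,n}+\eps_n x)$, and using elliptic estimates plus the fact that $f'(\We)\to 1_{B_s(0)}$ after rescaling (again by Lemma~\ref{lemma-livelli}), one extracts limits solving the limiting linearized equation $-\Delta v=1_{B_s(0)}v$ on $\mathbb R^2$ in the appropriate class; the nondegeneracy of the cell function $U$ — its kernel in the relevant space being spanned exactly by $\partial_{x_h}U$, $h=1,2$ — together with the orthogonality conditions defining $E_{\eps,Z}$ forces $v\equiv0$ on each peak. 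Away from the peaks one uses that $\We\to0$ so $f'(\We)\equiv0$ there, hence $-\eps_n^2\Delta\omega_n\to0$ forces $\omega_n\to0$ by the maximum principle / Green's representation. This contradicts $\|\omega_n\|_\infty=1$. The orthogonality estimates \eqref{eq:bjhorthogonal} guarantee that the finitely many Lagrange-multiplier terms produced by $Q_\eps$ can be controlled and absorbed.

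\emph{Fixed point.} Granting (a) and (b), define the map
\[
\mathcal T(\omega):=L_\eps^{-1}Q_\eps\big[\ell_\eps-N_\eps(\omega)\big],\qquad \omega\in E_{\eps,Z}.
\]
Since $f$ is globally Lipschitz with constant $\le1$, one has the crude bound $\|N_\eps(\omega)\|_\infty\le C\|\omega\|_\infty$ and, more usefully, $\|N_\eps(\omega_1)-N_\eps(\omega_2)\|_\infty=o(1)\|\omega_1-\omega_2\|_\infty$ as $\eps\to0$ on the ball $\{\|\omega\|_\infty\le C\eps/|\ln\eps|\}$ — this uses that $N_\eps(\omega)$ is supported, up to $O(\eps^3)$-measure sets, in thin annuli around the $z_i$ of radius $\sim s\eps$ where $\We$ crosses the levels $1$ and $-\gamma$, so that $\|N_\eps(\omega)\|_\infty\le C\|\omega\|_\infty\cdot(\text{small})$, the smallness coming from the annulus having $W_\eps$-oscillation comparable to $\|\omega\|_\infty$ itself. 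Combined with $\|L_\eps^{-1}Q_\eps\ell_\eps\|_\infty\le C|\ln\eps|\cdot O(\eps/|\ln\eps|^2)=O(\eps/|\ln\eps|)$, the Banach fixed-point theorem gives a unique $\omega_\eps$ in this ball with $\mathcal T(\omega_\eps)=\omega_\eps$, and by construction $u_\eps=\We+\omega_\eps$ solves \eqref{eq:projpb}; bootstrapping the equation gives $\omega_\eps\in W^{2,p}\cap H_0^1$, i.e. $\omega_\eps\in E_{\eps,Z}$, and $\|\omega_\eps\|_\infty=O(\eps/|\ln\eps|)$ as claimed.

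\emph{Main obstacle.} The delicate point — and the one truly particular to this problem — is making the nonlinear estimate on $N_\eps(\omega)$ and the error estimate on $Q_\eps\ell_\eps$ rigorous despite $f$ being merely Lipschitz: one cannot Taylor-expand $f$ pointwise, only use that $f'=1_{\{t>1\}}+1_{\{t<-\gamma\}}$ a.e., so all the quadratic/superlinear gains must be extracted from measure estimates on the symmetric differences of the sets $\{\We+\omega\gtrless 1\}$ and $\{\We\gtrless 1\}$ (and likewise at level $-\gamma$), which in turn rest on the quantitative level-set geometry of Lemma~\ref{lemma-livelli} and on $|\nabla\We|\gtrsim \eps^{-1}$ near the free boundaries. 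This is where the two-peak sign-changing nature forces one to track both families of annuli simultaneously and to check that the $z_1$- and $z_2$-contributions do not interfere — which is exactly what the choice \eqref{eq:asystem} of $(a_{\eps,1},a_{\eps,2})$ and the decoupling observed after Lemma~\ref{lem:Wid} are designed to ensure. I would present the linear theory and these two estimates as separate lemmas and then give the fixed-point argument in a few lines, following \cite{CaoPengYan, DancerYan} for the routine parts.
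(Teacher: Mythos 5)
Your proposal follows essentially the same route as the paper: invertibility of the projected linearized operator $Q_\eps L_\eps$ on $E_{\eps,Z}$ via a contradiction/rescaling argument using the nondegeneracy of the cell function and the orthogonality conditions, followed by a contraction mapping for $\omega$ with the error $\ell_\eps$ and the Lipschitz remainder supported in thin annuli where $\We$ crosses the levels $1$ and $-\gamma$. The only difference is bookkeeping: the paper measures the error and remainder in $L^p$ and uses the weighted inversion estimate $\|(Q_\eps L_\eps)^{-1}v\|_\infty\le c_0^{-1}\eps^{-2/p}\|v\|_p$ rather than your $L^\infty$-to-$L^\infty$ bounds, but both yield the same final estimate $\|\omega_\eps\|_\infty=O(\eps/|\ln\eps|)$.
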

In view of Proposition~\ref{prop:redstatement}, the construction
of a peak solution to \eqref{eq:pb1} is reduced to finding $Z=(z_1,z_2)$
such that the solution $\ue$ to the projected problem~\eqref{eq:projpb}
is actually a solution to the full problem~\eqref{eq:pb1}.
\par
The remaining part of this section is devoted to the proof of Proposition~\ref{prop:redstatement}.
We note that the linearization of the problem
\[
-\eps^2\Delta u= (u-1)_+-(u+\gamma)_-\mbox{\ in\ }\Omega,\qquad u\in H_0^1(\Omega)
\] 
about a solution $u$ is given by
\[
-\eps^2\Delta \phi= \chi_{\{u>1\} \cup \{u<-\gamma\}} \phi \mbox{\ in\ }\Omega,\qquad \phi\in H_0^1(\Omega).
\]
Let $f_{\eps}:\Omega\to\mathbb R$ be defined by
\[
f_{\eps}(y)=
\left\{
\begin{aligned}
&1, &&\mathrm{if\ } W_\eps(y)-1>0 \,\textrm{ or } W_\eps(y)+\gamma <0 \\
&0, &&\mathrm{otherwise}.
\end{aligned}
\right.
\]
\begin{rmk}
From Lemma \ref{lemma-livelli} it follows that $f_{\eps}(y)=1$ in $B_{s \eps (1-T\eps)}(z_1) \cup B_{s \eps (1-T\eps)}(z_2)$ and $f_{\eps}(y)=0$ in $\Omega \setminus \left( B_{s \eps (1+\eps^\sigma)}(z_1) \cup B_{s \eps (1+\eps^\sigma)}(z_2) \right)$.	
\end{rmk}
We define $L_\eps: W^{2,p}(\Omega)\cap H_0^1(\Omega)\to L^p(\Omega)$ by setting
\[
L_\eps u=-\eps^2\Delta u-f_{\eps}(y)u.
\]
Following the contradiction argument used in the proof of \cite[Lemma 3.3]{CaoPengYan}, we have the following:
\begin{lemma}
\label{lem:inv}
Let $p>1$ be fixed. There are constants $c_0>0$ and $\eps_0>0$ such that
for any $\eps\in(0,\eps_0)$, $Z$ satisfying \eqref{assump:Z}, $u\in E_{\eps,Z}$
with $Q_\eps L_\eps u=0$ in $\Omega\setminus\cup_{j=1}^2B_{L\eps}(z_j)$ for some large $L>0$,
then
\begin{equation}
\label{eq:linestimate}
\|Q_\eps L_\eps u\|_{L^p(\Omega)}\ge c_0\eps^{2/p}\|u\|_{L^\infty(\Omega)}.
\end{equation}
Consequently, $Q_\eps L_\eps:E_{\eps,Z}\to F_{\eps,Z}$ is one-to-one and onto,
and for every $v\in F_{\eps,Z}$ we have
\[
\|(Q_\eps L_\eps u)^{-1}v\|_{L^\infty(\Omega)}\le c_0^{-1}\eps^{-2/p}\|v\|_{L^p(\Omega)}.
\]
\end{lemma}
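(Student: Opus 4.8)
The plan is to argue by contradiction, following the blow-up scheme of \cite[Lemma~3.3]{CaoPengYan}, adapted to the presence of two peaks at heights $1$ and $-\gamma$. Suppose \eqref{eq:linestimate} fails: then there are sequences $\eps_n\to0$, $Z_n=(z_{1,n},z_{2,n})$ satisfying \eqref{assump:Z}, and $u_n\in E_{\eps_n,Z_n}$ with $Q_{\eps_n}L_{\eps_n}u_n=0$ outside $\cup_j B_{L\eps_n}(z_{j,n})$, normalized so that $\|u_n\|_{L^\infty(\Omega)}=1$, yet $\eps_n^{-2/p}\|Q_{\eps_n}L_{\eps_n}u_n\|_{L^p(\Omega)}\to0$. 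Write $g_n:=Q_{\eps_n}L_{\eps_n}u_n$ and unwind the definition of $Q_{\eps_n}$ from \eqref{def:Qe}: $L_{\eps_n}u_n=g_n+\sum_{j,h}b_{j,h}^{(n)}\xi^j_{\eps_n}\,\partial_{z_{j,h}}U_{\eps_n,z_{j,n},a_{\eps_n,j}}$. Pairing this identity against suitable test functions and using the (near-)orthogonality estimate \eqref{eq:bjhorthogonal} together with the constraint $u_n\in E_{\eps_n,Z_n}$, one shows the coefficients are small, namely $|b_{j,h}^{(n)}|\cdot(\text{size of }\partial_{z}U)=o(\text{relevant norm})$; the key is that $f_{\eps_n}$ is supported in the tiny balls $B_{s\eps_n(1+\eps_n^\sigma)}(z_{j,n})$ by Lemma~\ref{lemma-livelli}, so the zeroth-order term in $L_{\eps_n}$ only acts near the peaks.

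Next I would perform the rescaling around each peak separately. For $j=1,2$ set $v_{j,n}(y):=u_n(z_{j,n}+\eps_n y)$ on the rescaled domain $(\Omega-z_{j,n})/\eps_n$, which exhausts $\mathbb R^2$. Then $v_{j,n}$ solves $-\Delta v_{j,n}=f_{\eps_n}(z_{j,n}+\eps_n\cdot)\,v_{j,n}+\eps_n^2(\text{lower order})$; by the level-set estimates of Lemma~\ref{lemma-livelli}, $f_{\eps_n}(z_{j,n}+\eps_n\cdot)\to \mathbf 1_{B_s(0)}$ a.e., since the annular transition region $B_{s\eps_n(1+\eps_n^\sigma)}\setminus B_{s\eps_n(1-T\eps_n)}$ shrinks in relative width. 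Since $\|v_{j,n}\|_\infty\le1$, elliptic $L^p$ and Schauder estimates give, up to a subsequence, $v_{j,n}\to v_j$ in $C^1_{\mathrm{loc}}(\mathbb R^2)$, where $v_j$ is bounded and solves the limiting linearized equation $-\Delta v_j=\mathbf 1_{B_s(0)}v_j$ in $\mathbb R^2$. The nondegeneracy analysis for the single-peak cell function (as in \cite{CaoPengYan,DancerYan}) identifies the bounded kernel of this operator as $\mathrm{span}\{\partial_{x_1}U,\partial_{x_2}U\}$. The orthogonality built into $E_{\eps_n,Z_n}$ (after passing to the limit, using that $\Delta\partial_{z_{j,h}}V_j$ concentrates appropriately) forces $v_j$ to be orthogonal to this kernel, hence $v_j\equiv0$.

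Having killed the profiles near the peaks, I would upgrade this to $\|u_n\|_{L^\infty(\cup_j B_{L\eps_n}(z_{j,n}))}=o(1)$, and then propagate the smallness to all of $\Omega$: outside the peak balls the equation is $-\eps_n^2\Delta u_n=g_n+(\text{small }b\text{-terms})$ with $u_n$ already small on the boundary spheres $\partial B_{L\eps_n}(z_{j,n})$, so the maximum principle (or a Green's representation on $\Omega\setminus\cup_jB_{L\eps_n}(z_{j,n})$, using $\|g_n\|_p=o(\eps_n^{2/p})$) yields $\|u_n\|_{L^\infty(\Omega)}=o(1)$, contradicting the normalization $\|u_n\|_\infty=1$. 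This establishes \eqref{eq:linestimate}; one-to-one is immediate from it, and ontoness follows because $Q_\eps L_\eps$ is a compact perturbation of the isomorphism $-\eps^2\Delta$ (Fredholm index zero), and $\dim E_{\eps,Z}=\dim F_{\eps,Z}$ after the finite-dimensional reduction. The final bound on $(Q_\eps L_\eps)^{-1}$ is just a restatement of \eqref{eq:linestimate}. The main obstacle is the second step: ensuring the convergence $f_{\eps_n}(z_{j,n}+\eps_n\cdot)\to\mathbf 1_{B_s(0)}$ is strong enough (e.g.\ in measure) to pass to the limit in the zeroth-order term despite the free-boundary transition layer, and correctly matching the discrete orthogonality conditions of $E_{\eps_n,Z_n}$ with the continuous kernel of the limit operator — this is exactly where the two-peak, sign-changing structure requires care beyond the single-peak argument of \cite{CaoPengYan}.
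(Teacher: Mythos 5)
Your proposal is correct and follows essentially the same route as the paper: the same contradiction/blow-up scheme with normalization $\|u_n\|_\infty=1$, the same smallness estimate on the coefficients $b_{j,h}$ via \eqref{eq:bjhorthogonal} and Lemma~\ref{lemma-livelli}, the same rescaled limit equation $-\Delta v=1_{B_s(0)}v$ with kernel $\mathrm{span}\{\partial_{x_1}U,\partial_{x_2}U\}$ killed by the orthogonality in $E_{\eps,Z}$, and the same Fredholm-alternative argument for surjectivity. No substantive differences to report.
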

\begin{proof}
The proof follows \cite{CaoPengYan} closely. We outline the main ideas for the reader's convenience.
We first establish \eqref{eq:linestimate}.
Arguing by contradiction, we assume that there exist $\epn\to0$, $Z_n=(z_{1,n},z_{2,n})$
satifying \eqref{assump:Z} and $\un\in E_{\epn,Z_n}$, $\|\un\|_\infty=1$,
such that $\Qn\Ln\un=0$ in $\Omega\setminus \cup_{j=1}^2B_{L\epn}(z_{j,n})$
and $\|\Qn\Ln\un\|_p\le\epn^{2/p}/n$.
Let $(\mathbf{b}_{1,n},\mathbf{b}_{2,n})\in\mathbb R^{2\times 2}$ be such that
\[
\Qn\Ln\un=\Ln\un-\mathcal B(\mathbf{b}_{1,n},\mathbf{b}_{2,n}).
\]
where we have defined
\[
\mathcal B(\mathbf{b}_{1,n},\mathbf{b}_{2,n}):=\sum_{j,h=1}^2b_{j,h,n}\xi\left(\frac{|y-z_{j,n}|}{s\epn}\right)\frac{\partial U_{j,n}}{\partial z_{j,h}}
\] 
Then, similarly as in \cite{CaoPengYan}, using Lemma \ref{lemma-livelli}, \eqref{exp:ai} \eqref{exp:ueps} and \eqref{eq:bjhorthogonal} , we derive
\[
b_{j,h,n}=O\left(\epn^{1+\sigma}|\ln\epn|\right).
\]
Consequently, in virtue of \eqref{exp:ueps}, we have
\[
\left\|\mathcal B(\mathbf{b}_{1,n},\mathbf{b}_{2,n})\right\|_{L^p(\Omega)}
=O\left(\epn^{\frac{2}{p}+\sigma}\right)
\]
and
\[
\|\Ln\un\|_{L^p(\Omega)}=\|\Qn\Ln\un\|_{L^p(\Omega)}+O(\epn^{\frac{2}{p}+\sigma})=o(\epn^{\frac{2}{p}}).
\]
Now, rescaling $u_n$ around $\zn$, we define
\[
\tun(y):=\un(\epn y+\zn), \qquad y\in\Omega_n,
\]
where $\Omega_n=\{y:\epn y+\zn\in\Omega\}$.
Since $\|\tun\|_\infty=\|\un\|_\infty=1$, we have that $\tun$
is bounded in $W_{\mathrm{loc}}^{2,p}(\mathbb R^2)$ and there exists $u_1$ such that
$\tun\to u_1$ in $C_{\mathrm{loc}}^2(\mathbb R^2)$.
On the other hand, in view of Lemma~\ref{lemma-livelli} we have that $f_{\epn}(\epn y+\zn)\to 1_{B_s(0)}$.
We conclude that $u_1$ satisfies the problem
\[
-\Delta u_1-1_{B_s(0)}u_1=0,
\qquad u_1\in L^\infty(\mathbb R^2).
\]
Now Proposition~3.1 in \cite{CaoPengYan} implies that
\[
u_1=c_1\frac{\partial U}{\partial x_1}+c_2\frac{\partial U}{\partial x_1},
\]
for some $c_1,c_2\in\mathbb R$,
where $U$ is the basic cell function given by \eqref{def:U}. On the other hand, taking limits in the orthogonality condition
\[
\int_\Omega\Delta\left(\frac{\partial V_{\epn,Z_n,i}}{\partial z_{i,n}}\right)\un=0,
\]
which holds true since $\un\in E_{\epn,Z_n}$, we derive $c_1=c_2=0$.
We conclude that for any $L>0$ 
\[
\|\un\|_{L^\infty(B_{L\eps}(\zn))}=o(1).
\] 
By a similar rescaling argument at $z_{2,n}$ we also obtain that
\[
\|\un\|_{L^\infty(B_{L\eps}(\znn))}=o(1).
\]
Now, recalling that $\Qn\Ln\un=0$ in $\Omega\setminus\cup_{j=1}^2B_{L\epn}(z_{j,n})$
we finally conclude that $\|\un\|_\infty=o(1)$ which is a contradiction. Hence, \eqref{eq:linestimate} is established.
\par
Now, we check that $Q_\eps L_\eps:E_{\eps,Z}\to F_{\eps,Z}$ is one-to-one and onto.
Let $u\in E_{\eps,Z}$ be such that $Q_\eps L_\eps u=0$. 
Since the action of $Q_\eps$ is localized in $B_{2s\eps}(z_1)\cup B_{2s\eps}(z_1)$,
we have $Q_\eps L_\eps u=L_\eps u=0$ in $\Omega\setminus(B_{L\eps}(z_1)\cup B_{L\eps}(z_1))$,
for any $L\ge 2s$. Now estimate~\eqref{eq:linestimate} yields $u=0$ and the asserted one-to-one property follows.
\par
We are left to check the surjectivity property. We note that by continuity of the projection operator $P$,
we have $\frac{\partial V_j}{\partial z_{j,h}}=P\frac{\partial U_j}{\partial z_{j,h}}\in H_0^1(\Omega)$,
$j,h=1,2$.
In particular, for any $u\in E_{\eps,Z}$ we have
\[
\int_\Omega\Delta u\frac{\partial V_j}{\partial z_{j,h}}
=\int_\Omega u\Delta \frac{\partial V_j}{\partial z_{j,h}}=0,
\qquad j,h=1,2.
\]
It follows that $Q_\eps\Delta u=\Delta u$ for all $u\in E_{\eps,Z}$ and it is easy to see that it is one to one and onto from $E_{\eps,Z}$ to $F_{\eps,Z}$. Now, let 
$v\in F_{\eps,Z}$. We check that the equation $Q_\eps L_\eps u=v$ admits a solution in $E_{\eps,Z}$. Equivalently, we seek a solution to
\[
\eps^2 u-(-Q_\eps\Delta)^{-1}[Q_\eps f_\eps(y)u]=(-Q_\eps\Delta)^{-1}v.
\]
The operator on the r.h.s.\ above is of the form $\eps^2I+$compact operator. Then, by the Fredholm alternative, we conclude that $Q_\eps L_\eps:E_{\eps,Z}\to F_{\eps,Z}$ is onto.
\end{proof}
We now define a fixed point problem which is equivalent to \eqref{eq:projpb}.
We recall that the error term $\ell_\eps=\ell_\eps(x)$
is defined by
\begin{equation*}
\begin{split}
\ell_\eps := & \,  \eps^2\Delta W_\eps+(W_\eps-1)_+-(-W_\eps-\gamma)_+\\
= & -(U_1-a_1)_++(U_2-a_2)_++(W_\eps-1)_+-(-W_\eps-\gamma)_+.
\end{split}
\end{equation*}
We define the higher order error $R_\eps$ as follows:
\[
R_\eps(\omega) :=(W_\eps+\omega-1)_+-(-W_\eps-\omega-\gamma)_+ - (W_\eps-1)_+-(-W_\eps-\gamma)_+-f_\eps(y)\omega. 
\]
We observe that if  $\omega\in W^{2,p}(\Omega)\cap H_0^1(\Omega)$ satisfies 
\[
L_\eps(\omega)=\ell_\eps + R_\eps(\omega)
\]
then $u=W_\eps+\omega$ is a solution to problem~\eqref{eq:pb1}. 
We note that a solution $\omega$ to
\begin{equation}
\label{eq:projpb2}
Q_\eps L_\eps (\omega) = Q_\eps (\ell_\eps + R_\eps(\omega))
\end{equation}
readily yields a solution to \eqref{eq:projpb}.
In view of the invertibility property of $Q_\eps L_\eps$, as stated in Lemma~\ref{lem:inv},
we define the operator $G_\eps:E_{\eps,Z}\to F_{\eps,Z}$ by setting
\[
G_\eps(\omega):=(Q_\eps L_\eps)^{-1}Q_\eps(\ell_\eps+R_\eps(\omega)).
\]
Then, the projected problem \eqref{eq:projpb2}
is equivalent to the fixed point problem 
\begin{equation}
\label{eq:fixpoint}
\omega=G_\eps(\omega).
\end{equation}
Arguing similarly as in the proof of \cite[Proposition 3.6]{CaoPengYan} one can prove that \eqref{eq:fixpoint} has a unique solution. 
More precisely, we have the following Lemma which together with the previous observations provides the proof of Proposition \ref{prop:redstatement}.
\begin{lemma}
\label{lem:fixpoint}
There exists an $\eps_0>0$ such that for any $\eps\in(0,\eps_0)$ and for any $Z$
satisfying \eqref{assump:Z}, equation \eqref{eq:fixpoint} has a unique solution
$\omega_\eps\in E_{\eps,Z}$ with
\begin{equation}
\label{eq:omeest}
\|\omega_\eps\|_\infty=O\left(\frac{\eps}{|\ln\eps|}\right).
\end{equation}
\end{lemma}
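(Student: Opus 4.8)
The plan is to solve the fixed-point equation $\omega = G_\eps(\omega)$ by the contraction mapping principle on a suitable small ball in $E_{\eps,Z}$. Specifically, I would set
\[
\mathcal N_\eps := \left\{ \omega \in E_{\eps,Z} : \|\omega\|_\infty \le C_0\,\frac{\eps}{|\ln\eps|} \right\}
\]
for a large constant $C_0$ to be fixed, and show that for $\eps$ small enough (uniformly in $Z$ satisfying \eqref{assump:Z}) the map $G_\eps$ sends $\mathcal N_\eps$ into itself and is a contraction there. Since $G_\eps = (Q_\eps L_\eps)^{-1} Q_\eps (\ell_\eps + R_\eps(\cdot))$, and Lemma~\ref{lem:inv} gives $\|(Q_\eps L_\eps)^{-1} v\|_\infty \le c_0^{-1}\eps^{-2/p}\|v\|_p$ together with $\|Q_\eps v\|_p \le C\|v\|_p$, everything reduces to two estimates: a bound on $\|\ell_\eps\|_p$ and a Lipschitz-type bound on $\|R_\eps(\omega_1) - R_\eps(\omega_2)\|_p$ (with $\|R_\eps(0)\|_p$ as a special case).

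First I would estimate the error term $\ell_\eps = -(U_1-a_1)_+ + (U_2-a_2)_+ + (W_\eps-1)_+ - (-W_\eps-\gamma)_+$. By Lemma~\ref{lemma-livelli}, all four summands are supported in $B_{s\eps(1+\eps^\sigma)}(z_1) \cup B_{s\eps(1+\eps^\sigma)}(z_2)$, a set of measure $O(\eps^2)$; and on that set the choice of $a_i$ in \eqref{eq:asystem} together with the Taylor expansions in Lemma~\ref{1.10-CPY} (formulas \eqref{expansion-1}, \eqref{expansion-gamma}) shows the integrand is $O(\eps/|\ln\eps|)$ pointwise — the leading terms $U_i - a_i$ cancel against $(W_\eps \mp \cdot)$ by construction, leaving the linear-in-$(y-z_i)$ terms of size $O(\eps/|\ln\eps|)$ plus $O(\eps^2/|\ln\eps|)$ remainders. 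Hence $\|\ell_\eps\|_p = O(\eps^{2/p}\cdot \eps/|\ln\eps|) = O(\eps^{1+2/p}/|\ln\eps|)$, so $\|(Q_\eps L_\eps)^{-1} Q_\eps \ell_\eps\|_\infty = O(\eps/|\ln\eps|)$, which is exactly the claimed order \eqref{eq:omeest}. This step is essentially the computation behind the stated choice of $a_i$ and should go through as in \cite{CaoPengYan}.

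For the nonlinear remainder, I would exploit that $t \mapsto (t-1)_+ - (-t-\gamma)_+$ is globally $1$-Lipschitz and that $f_\eps$ is its "derivative" in the region where $W_\eps$ is bounded away from the free-boundary levels. Writing $R_\eps(\omega)$ as the difference between the nonlinearity evaluated at $W_\eps + \omega$ and its first-order Taylor approximation at $W_\eps$, the pointwise bound $|R_\eps(\omega)(y)| \le |\omega(y)|\,\mathbf 1_{A_\eps}(y)$ holds, where $A_\eps$ is the transition annulus $\{|W_\eps(y)-1| \le \|\omega\|_\infty\} \cup \{|W_\eps(y)+\gamma| \le \|\omega\|_\infty\}$; by Lemma~\ref{lemma-livelli} and the explicit logarithmic profile of $U_{\eps,a}$ one checks $m(A_\eps) = O(\eps^{2+\tau})$ for some $\tau > 0$ when $\|\omega\|_\infty \lesssim \eps/|\ln\eps|$, since near $\partial B_{s\eps}(z_i)$ the function $W_\eps \mp \cdot$ crosses the level with derivative of order $1/(\eps|\ln\eps|)$. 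This yields $\|R_\eps(\omega)\|_p = o(\eps^{2/p})\|\omega\|_\infty$ and, similarly, $\|R_\eps(\omega_1) - R_\eps(\omega_2)\|_p = o(\eps^{2/p})\|\omega_1 - \omega_2\|_\infty$, so $G_\eps$ is a contraction on $\mathcal N_\eps$ with constant $o(1)$. The main obstacle is precisely this measure estimate for the transition region: one must track carefully how $W_\eps \mp \cdot$ behaves between the radii $s\eps(1-T\eps)$ and $s\eps(1+\eps^\sigma)$ furnished by Lemma~\ref{lemma-livelli}, using the sharp gradient bound from the profile of $\varphi_1$ near $s$ (where $\varphi_1'(s) < 0$), to guarantee that the sublevel set where the nonlinearity is not yet "linearized" shrinks faster than $\eps^2$. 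Once these two estimates are in hand, the Banach fixed-point theorem gives the unique $\omega_\eps \in \mathcal N_\eps$ with the stated bound, and the uniformity in $Z$ follows because all constants above depend on $Z$ only through $\delta$ in \eqref{assump:Z}. This completes the proof of Lemma~\ref{lem:fixpoint} and hence of Proposition~\ref{prop:redstatement}.
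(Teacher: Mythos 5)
Your proposal is correct and follows essentially the same route as the paper: a contraction mapping argument for $G_\eps=(Q_\eps L_\eps)^{-1}Q_\eps(\ell_\eps+R_\eps(\cdot))$ on a small ball in $E_{\eps,Z}$, driven by the inverse estimate of Lemma~\ref{lem:inv} together with the bounds $\|\ell_\eps\|_p=O(\eps^{1+2/p}/|\ln\eps|)$ and $\|R_\eps(\omega)\|_p=O(\eps^{(2+\sigma)/p})\|\omega\|_\infty$ coming from the level-set localization of Lemma~\ref{lemma-livelli}. The only cosmetic difference is your choice of the ball of radius $C_0\eps/|\ln\eps|$ instead of the paper's $\{\|\omega\|_\infty<\eps\}$, which does not affect the argument.
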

\begin{proof}
The proof follows \cite{CaoPengYan} closely. We outline it for the reader's convenience.
We define 
\[
M:=E_{\eps,Z}\cap\{\|\omega\|_\infty<\eps\}.
\] 
We start by checking that $G_\eps$ is a map from $M$ to $M$.
Since the action of $Q_\eps$ is localized in $B_{2s\eps}(z_1)\cup B_{2s\eps}(z_1)$,
we have 
\[
Q_\eps(\ell_\eps+R_\eps(\omega))=\ell_\eps+R_\eps(\omega)=0
\qquad\mbox{in\ }\Omega\setminus(B_{L\eps}(z_1)\cup B_{L\eps}(z_1)),
\]
for $L\ge2s$.
Therefore, we may use estimate~\eqref{eq:linestimate} to obtain
\[
\|G_\eps(\omega)\|_\infty=\|(Q_\eps L_\eps)^{-1}(Q_\eps\ell_\eps+Q_\eps R_\eps(\omega))\|_\infty
\le C\eps^{-2/p}\|Q_\eps\ell_\eps+Q_\eps R_\eps(\omega)\|_p.
\]
Similarly as in \cite{CaoPengYan}, we estimate
\begin{equation*}
\begin{aligned}
&\|Q_\eps\ell_\eps+Q_\eps R_\eps(\omega)\|_p\le C(\|\ell_\eps\|_p+\|R_\eps(\omega)\|_p)\\
&\|\ell_\eps\|_p\le\frac{C\eps^{2/p+1}}{|\ln\eps|}\\
&\|R_\eps(\omega)\|_p\le C\eps^{(2+\sigma)/p}\|\omega\|_\infty.
\end{aligned}
\end{equation*}
Since $\omega\in M$, we have $\|\omega\|_\infty\le\eps$ and we conclude that
\begin{equation}
\label{eq:Gest}
\|G_\eps(\omega)\|_\infty\le C\eps^{-2/p}\left(\frac{\eps^{2/p+1}}{|\ln\eps|}
+\eps^{(2+\sigma)/p}\|\omega\|_\infty\right)\le\frac{C\eps}{|\ln\eps|}.
\end{equation}
In particular, $G_\eps$ maps $M$ into $M$.
\par
By similar arguments, we can also check the contraction property:
\[
\|G_\eps(\omega_1)-G_\eps(\omega_2)\|_\infty\le C\eps^{-2/p}\|R_\eps(\omega_1)-R_\eps(\omega_2)\|_p
=o(1)\|\omega_1-\omega_2\|_\infty.
\]
It follows that for all sufficiently small $\eps>0$ there exists a fixed point $\omega_\eps\in M$
for $G_\eps$. Moreover, in view of \eqref{eq:Gest}, we obtain the asserted estimate
\[
\|\omega_\eps\|_\infty=\|G_\eps(\omega_\eps)\|_\infty\le\frac{C\eps}{|\ln\eps|}.
\]
\end{proof}
%
%
\begin{proof}[Proof of Proposition~\ref{prop:redstatement}]
Now the proof follows as  direct consequence of Lemma~\ref{lem:inv} and Lemma~\ref{lem:fixpoint}.
\end{proof}
\section{Free boundary properties for the projected problem}
\label{sec:freebdry}
In order to show that $(z_1,z_2)\to\omega_\eps$ is $C^1$ it is essential to show that
the free boundaries $\{\ue=1\}$ and $\{\ue=-\ga\}$ have two-dimensional Lebesgue measure
equal to zero, see the {\it{Step 2}} in the proof of Proposition~\ref{prop:redfunct} below.
The aim of this section is to establish such a property, via an argument involving the Faber-Krahn inequality.
Throughout this section, for any $S\subset\mathbb R^2$, 
we denote by $mS$ the two-dimensional Lebesgue measure of $S$.
%
The main result in this section is the following.
\begin{prop}
\label{prop:zeromeasure}
Let $\ue$ be the solution to the \lq\lq projected problem" \eqref{eq:projpb},
as obtained in Proposition~\ref{prop:redstatement}.
It holds that
\[
m\{u_\eps=1\}=0=m\{u_\eps=-\gamma\}.
\]
\end{prop}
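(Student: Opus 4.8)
The plan is to argue by contradiction: suppose $m\{u_\eps = 1\} > 0$ (the case $m\{u_\eps=-\ga\}>0$ being symmetric). On the open set $\{u_\eps > 1\}$ the equation reads $-\eps^2 \Delta u_\eps = u_\eps - 1$, so $v := u_\eps - 1$ solves $-\eps^2\Delta v = v$ with $v > 0$ in $\{u_\eps>1\}$ and $v = 0$ on $\partial\{u_\eps>1\}$. First I would observe that, by the Lebesgue density theorem applied to the positive-measure set $\{u_\eps = 1\}$, at a.e.\ density point $x_0$ one has $\nabla u_\eps(x_0) = 0$; since $\Delta u_\eps = 0$ a.e.\ on $\{u_\eps = 1\}$ (as $u_\eps \in W^{2,p}$ and $u_\eps$ is constant there), one deduces from the equation $-\eps^2\Delta u_\eps = (u_\eps-1)_+ - (-u_\eps-\ga)_+$ evaluated a.e.\ on this set that the right-hand side vanishes there, which is automatic, so the real information must come from elsewhere. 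The key point I would extract instead is: the interface $\partial\{u_\eps>1\}$, having positive-measure complement of $\{u_\eps>1\}$ adjacent to it, forces the first Dirichlet eigenvalue of each connected component $A$ of $\{u_\eps>1\}$ to equal $\eps^{-2}$ exactly — because $v$ is a positive eigenfunction, hence the principal one, so $\lambda_1(A) = \eps^{-2}$.

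Next I would invoke the Faber–Krahn inequality: $\lambda_1(A) \ge \lambda_1(B)$ where $B$ is the ball with $mB = mA$, with equality iff $A$ is (up to a null set) a ball. Since $\lambda_1$ of a ball of radius $\rho$ in $\rr^2$ is $j_0^{(1)}/\rho^2 = s^2/\rho^2$ (using the paper's normalization $s^2 = j_0^{(1)}$), the constraint $\lambda_1(A) = \eps^{-2}$ gives $mA \ge \pi s^2 \eps^2$. But from the level set estimates in Lemma~\ref{lemma-livelli} combined with the estimate $\|\omega_\eps\|_\infty = O(\eps/|\ln\eps|)$ from Proposition~\ref{prop:redstatement}, the set $\{u_\eps > 1\}$ is contained in $B_{s\eps(1+\eps^\sigma)}(z_1) \cup B_{s\eps(1+\eps^\sigma)}(z_2)$ and is close to the two disks $B_{s\eps}(z_i)$; I would quantify this to show the total measure $m\{u_\eps>1\} = \pi s^2\eps^2(1 + o(1))$ and, more importantly, that $\{u_\eps>1\}$ has at most two connected components, one near each $z_i$, each of measure $\le \pi s^2\eps^2(1+o(1))$. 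Combining $mA \ge \pi s^2\eps^2$ with $mA \le \pi s^2\eps^2(1+o(1))$ pins $mA$ to essentially $\pi s^2\eps^2$, and then the equality case of Faber–Krahn forces $A$ to be a ball of radius $s\eps(1+o(1))$ — but this is incompatible with the strict asymptotic shape of $\{u_\eps>1\}$: near $z_1$ the function $W_\eps - 1$ is, by Lemma~\ref{lem:Wid}, a radial eigenfunction $a_1 k_\eps \varphi_1$ perturbed by genuinely non-radial lower-order terms $-\frac{a_1}{\ln(R/s\eps)}[g(x,z_1)-g(z_1,z_1)] - \frac{a_2}{\ln(R/s\eps)}[\bG(x,z_2)-\bG(z_1,z_2)]$ whose gradients at $z_1$ are nonzero (generically), so the zero level set cannot be an exact sphere. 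Making this last incompatibility rigorous is where I expect the main work to lie.

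The hard part will be turning "$\{u_\eps>1\}$ is nearly a ball but not exactly" into a contradiction with the rigidity in Faber–Krahn's equality case. One clean way: the equality case of Faber–Krahn is quantitatively stable (Brasco–De Philippis–Velichkov and related), so $\lambda_1(A) = \lambda_1(B_{mA}^*)$ forces $A$ to be exactly a ball up to null sets, not merely close; then on that exact ball $v$ must be the exact radial first eigenfunction, so $u_\eps = 1 + c\,\varphi_1((x-\zeta)/\eps)$ on $A$ for some center $\zeta$ and constant $c>0$. I would then compare this with the equation satisfied by $u_\eps = W_\eps + \omega_\eps$ on all of $\Omega$ and with the explicit non-radial structure of $W_\eps$ near $z_1$ from Lemma~\ref{lem:Wid}: the harmonic parts $g(\cdot,z_1) - g(z_1,z_1)$ and $\bG(\cdot,z_2)-\bG(z_1,z_2)$ have nonvanishing gradient at the peak (this can be ensured along the reduction, or argued via the non-degeneracy giving $Dg, D\bG$ generically nonzero), while $\omega_\eps$ is of strictly lower order $O(\eps/|\ln\eps|)$; hence the true $\{u_\eps>1\}$ cannot be a metric ball, contradicting the equality case. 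This yields $m\{u_\eps>1 \text{ or } u_\eps = 1\}$ has no interface of positive measure, i.e.\ $m\{u_\eps = 1\} = 0$. The argument for $\{u_\eps = -\ga\}$ is identical, working with $-u_\eps - \ga$ near $z_2$ and using Lemma~\ref{lem:Wid}'s second branch.
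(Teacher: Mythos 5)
Your proposal has the right opening move (contradiction, localization near $z_1$, the observation that $v=u_\eps-1$ is a positive principal eigenfunction of each component of $\{u_\eps>1\}$ with $\lambda_1=\eps^{-2}$, and Faber--Krahn), but it diverges from a workable argument in two places, and the second one is a genuine gap.

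First, $u_\eps$ solves the \emph{projected} problem \eqref{eq:projpb}, not \eqref{eq:pb1}: on $\{u_\eps>1\}$ the equation is $-\eps^2\Delta u_\eps=(u_\eps-1)-\xi^1_\eps\langle\mathbf{b}_1,\nabla_{z_1}U_1\rangle$, so you cannot write $-\eps^2\Delta v=v$ until you have shown $\mathbf{b}_1=0$. The step you dismissed as giving no information (that $\Delta u_\eps=0$ a.e.\ on the positive-measure set $\{u_\eps=1\}$ and the right-hand side vanishes there) is precisely what the paper uses to extract this: on $\{u_\eps=1\}$ the projected equation reduces to $\langle\mathbf{b}_1,\nabla_{z_1}U_1\rangle=0$ on a set of positive measure, and the linear independence of $\partial U_1/\partial z_{1,1}$, $\partial U_1/\partial z_{1,2}$ forces $\mathbf{b}_1=0$. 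This is the only place the contradiction hypothesis is used to clean up the equation, and it is essential.

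Second, your endgame does not close. From the level-set estimates you only get $mA\le\pi s^2\eps^2(1+o(1))$, hence \emph{near}-equality in Faber--Krahn, and quantitative stability (Brasco--De Philippis--Velichkov) then yields that $A$ is close to a ball in Fraenkel asymmetry --- it does not, and cannot, force $A$ to be \emph{exactly} a ball; rigidity holds only under exact equality, which you do not have. Consequently the proposed contradiction with the non-radial terms of $W_\eps$ evaporates (and in any case you cannot assume $Dg(z_1,z_1)$ or $D\bG(z_1,z_2)$ are nonzero: the proposition must hold for every admissible $(z_1,z_2)$, not generically). The paper's route is different: Faber--Krahn is used only to prove \emph{connectedness} of $\{u_\eps>1\}$ (any component missing $z_1$ would lie in the thin annulus $B_{s\eps(1+\eps^\sigma)}(z_1)\setminus B_{s\eps(1-T\eps)}(z_1)$ of measure $O(\eps^{2+\sigma})$, whose first eigenvalue exceeds $\eps^{-2}$); together with the connectedness of $\{u_\eps<-\ga\}$ and of $\{-\ga<u_\eps<1\}$ (maximum principle), the measure-zero conclusion then follows from the Hartman--Wintner theorems and the Kinderlehrer--Spruck argument, which give $\nabla u_\eps\neq0$ on $\{u_\eps=1\}$, so the free boundary is a simple $C^2$ curve. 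Some such free-boundary regularity input is needed; connectedness plus approximate ball shape alone does not yield $m\{u_\eps=1\}=0$.
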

Before proving the previous result, we observe that arguing exactly as in Lemma \ref{lemma-livelli}, in view of the decay estimate~\eqref{eq:omeest} for $\omega_\eps$, we can prove the following useful level set estimates.
\begin{lemma}
\label{lem:livellieps}
There exist $0<\eps_0\ll1$ and $T\gg1$ such that for all $0<\eps<\eps_0$ we have
\begin{equation*}
\begin{aligned}
W_\eps(y)+\ome-1 >&\;0 \; , &&y\in B_{s \eps (1-T\eps)}(z_1)\,, \\
-W_\eps(y)-\ome-\gamma>&\;0 \; , &&y\in B_{s \eps (1-T\eps)}(z_2)\,\\
-\gamma <W_\eps(y)+\ome <&\;1  \; , \, &&y\in \Omega \setminus \left( B_{s \eps (1+\eps^\sigma)}(z_1) 
\cup B_{s \eps (1+\eps^\sigma)}(z_2) \right).
\end{aligned}
\end{equation*}
where $\sigma >0$ is a small constant.
\end{lemma}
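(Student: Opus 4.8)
\textbf{Proof plan for Lemma~\ref{lem:livellieps}.}

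The plan is to piggy-back on the three inequalities already proved in Lemma~\ref{lemma-livelli} for the pure approximate solution $W_\eps$, and simply absorb the perturbation $\ome$ using the uniform bound $\|\ome\|_\infty = O(\eps/|\ln\eps|)$ from \eqref{eq:omeest}. Since all three estimates in Lemma~\ref{lemma-livelli} were obtained with quantitative lower (resp.\ upper) bounds that are of strictly larger order than $\eps/|\ln\eps|$, the conclusions survive after shrinking $\eps_0$ and enlarging $T$ a little.

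First I would treat the innermost ball $B_{s\eps(1-T\eps)}(z_1)$. Tracking the proof of Lemma~\ref{lemma-livelli}, one has there
\[
W_\eps(y)-1 \;\ge\; \frac{\eps}{\log\frac{R}{s\eps}}\,(\gamma_1 T - Ms) + O\!\left(\frac{\eps^2}{|\log\eps|}\right)
\]
for $y\in B_{s\eps(1-T\eps)}(z_1)$. Adding $\ome$ costs at most $\|\ome\|_\infty = O(\eps/|\ln\eps|)$; thus
\[
W_\eps(y)+\ome-1 \;\ge\; \frac{\eps}{\log\frac{R}{s\eps}}\,(\gamma_1 T - Ms - C) + O\!\left(\frac{\eps^2}{|\log\eps|}\right),
\]
where $C$ is the implied constant in \eqref{eq:omeest}. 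Choosing $T > (Ms+C)/\gamma_1$ (rather than $Ms/\gamma_1$) and $\eps_0$ small makes the right-hand side strictly positive. The estimate on $B_{s\eps(1-T\eps)}(z_2)$ for $-W_\eps-\ome-\gamma$ is entirely symmetric, using \eqref{expansion-gamma} in place of \eqref{expansion-1}.

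For the outer region I would likewise revisit the three sub-estimates in the proof of Lemma~\ref{lemma-livelli} that cover, respectively, $\Omega\setminus(B_{\eps^{\bar\sigma}}(z_1)\cup B_{\eps^{\bar\sigma}}(z_2))$ and the two annuli $B_{\eps^{\bar\sigma}}(z_i)\setminus B_{s\eps(1+T\eps^{\bar\sigma})}(z_i)$. In the far region the bounds obtained were of the form $\gamma_2\bar\sigma\,\frac{\log\eps}{\log\frac{s\eps}{R}} - 1 + O(1/|\log\eps|) < 0$ (and the analogous one with $-\gamma$), whose dominant negative term is $O(1)$; adding $\|\ome\|_\infty = o(1)$ leaves them negative. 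In the two annuli the bounds were of order $\eps^{\bar\sigma}/\log\frac{R}{s\eps}$ with a negative leading coefficient $2M\gamma_2 - \gamma_1 T/2$; since $\eps^{\bar\sigma}/|\log\eps| \gg \eps/|\log\eps| = \|\ome\|_\infty$, enlarging $T$ (say $T > (4M\gamma_2 + C)/\gamma_1$) again keeps them negative. Finally, as in Lemma~\ref{lemma-livelli}, I would pick $\sigma < \bar\sigma$ and $\eps_0$ so small that $B_{s\eps(1+T\eps^{\bar\sigma})}(z_i) \subset B_{s\eps(1+\eps^\sigma)}(z_i)$, which patches the annular estimates together with the far-region estimate to cover all of $\Omega\setminus(B_{s\eps(1+\eps^\sigma)}(z_1)\cup B_{s\eps(1+\eps^\sigma)}(z_2))$. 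The only mildly delicate point is bookkeeping: one must make sure the single constant $C$ from \eqref{eq:omeest} is subtracted consistently in every threshold for $T$, but since there are finitely many such thresholds and each merely requires $T$ large, no genuine obstacle arises; the whole argument is a routine perturbation of Lemma~\ref{lemma-livelli}.
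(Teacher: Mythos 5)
Your proposal is correct and is exactly the argument the paper intends: the paper gives no separate proof of this lemma, stating only that it follows by arguing as in Lemma~\ref{lemma-livelli} together with the decay estimate \eqref{eq:omeest}, and your region-by-region absorption of $\|\ome\|_\infty=O(\eps/|\ln\eps|)$ (enlarging $T$ where the leading margin is of order $\eps/|\ln\eps|$, and merely shrinking $\eps_0$ where it is of order $1$ or $\eps^{\bar\sigma}/|\ln\eps|$) is the correct way to carry that out.
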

\begin{proof}[Proof of Proposition~\ref{prop:zeromeasure}]
Being a solution to the projected problem~\eqref{eq:projpb}, $\ue$ satisfies
\[
-\eps^2\Delta\ue=(\ue-1)_+-(-\ue-\gamma)_+
- \xi^1_{\eps}(y) \big \langle \mathbf{b}_1 , \nabla_{z_1} U_1(y) \big\rangle - \xi^2_{\eps}(y)\big  \langle \mathbf{b}_2 ,\nabla_{z_2} U_2(y)\big \rangle
\]
for some $\mathbf{b}_1, \mathbf{b}_2 \in \mathbb{R}^2$.
\par
We first show that $m\{u_\eps=1\}=0$.
Arguing by contradiction, suppose that $m\{u_\eps=1\}>0$.
In view of Lemma~\ref{lem:livellieps} we have $\{u_\eps=1\}\subset B_{2s\eps}(z_1)$.
In $\{u_\eps=1\}\subset B_{2s\eps}(z_1)$ we have $\Delta\ue\equiv0$ and therefore
$\ue$ satisfies, in  $\{u_\eps=1\}$
\[
0=-\eps^2\Delta\ue=(\ue-1)_+-\big \langle \mathbf{b}_1 , \nabla_{z_1} U_1(y) \big\rangle = -\big \langle \mathbf{b}_1 , \nabla_{z_1} U_1(y) \big\rangle
\]
Since $\partial U_1/\partial z_{1,1}$ and $\partial U_1/\partial z_{1,2}$
are linearly independent, we conclude that $\mathbf{b}_1=0$. 
In particular, $\ue$ satisfies
\[
-\eps^2\Delta\ue=(\ue-1)_+\ \mbox{in\ }B_{2s\eps}(z_1).
\]
Let
\[
\Omega_{\eps,1}:=\{\ue>1\}.
\]
\textit{Claim.} $\Omega_{\eps,1}$ has exactly one connected component.
\par
Indeed, suppose the contrary and let $\toe\subset\Omega_{\eps,1}$ be a connected component of
$\Omega_{\eps,1}$ with $z_1\not\in\toe$.
Note that $\toe$ is an open subset of $\Omega$.
Then, $\ue$ satisfies the problem
\begin{equation*}
\left\{
\begin{aligned}
-\eps^2\Delta\ue=&\;\ue-1
&&\hbox{in\ }\toe\\
\ue=&\;1
&&\hbox{on\ }\partial\toe.
\end{aligned}
\right.
\end{equation*}
Setting $v_\eps=\ue-1$, we derive
\begin{equation*}
\left\{
\begin{aligned}
-\eps^2\Delta v_\eps=&\;v_\eps
&&\hbox{in\ }\toe\\
v_\eps=&\;0
&&\hbox{on\ }\partial\toe.
\end{aligned}
\right.
\end{equation*}
Multiplying by $v_\eps$ and integrating, we obtain 
\[
\eps^2\int_{\toe}|\nabla v_\eps|^2=\int_{\toe}v_\eps^2\le\frac{1}{\lambda_1(\toe)}\int_{\toe}|\nabla v_\eps|^2
\]
and consequently 
\[
\lambda_1(\toe)\le\frac{1}{\eps^2}.
\]
On the other hand, setting $A_\eps:=B_{s\eps(1+\eps^\sigma)}\setminus B_{s\eps(1-T\eps)}$,
we note, again by Lemma~\ref{lem:livellieps}, that $\toe\subset A_\eps$ and $mA_\eps=2\pi s^2\eps^{2+\sigma}(1+o(1))$
as $\eps\to0$.
It follows that
\[
\lambda_1(\toe)\ge\lambda_1(A_\eps)\ge\lambda_1(A_\eps^\ast)=\frac{\pi j_0^2}{2\pi s^2\eps^{2+\sigma}(1+o(1))}
\]
where $A_\eps^\ast$ denotes a ball with measure $mA_\eps^\ast=mA_\eps$,
and where we used the Faber-Krahn inequality (see, e.g., \cite{Bandle})
to derive the last inequality. This is a contradiction.
Hence, $\Omega_{\eps,1}$ is connected and the claim is established. 
Arguing in a similar way we can prove that the set $\Omega_{\eps}^{-\ga}:=\{\ue<-\gamma\}$ is connected. 
Finally, we claim that by virtue of Lemma \ref{lem:livellieps}, the maximum principle 
implies that the set $\Omega_{\eps,-\ga}^1:=\{x \in \Omega \;: \; -\gamma< u_\eps(x) <1\} $ is also connected. 
Indeed, arguing by contradiction we suppose  that there exists a connected 
component $\hat\omega_{\eps}$ of $\Omega_{\eps,-\ga}^1$ with $\hat\omega_{\eps}\subset B_{2s\eps}(z_1)$. 
Then $u_\eps$ is a solution of
\[
\begin{cases}
	\Delta u_\eps = 0 & \textrm{in } \, \hat\omega_{\eps}\\
	u_\eps =1 & \textrm{on } \, \partial\hat\omega_{\eps}\\
	u_\eps < 1 & \textrm{in } \, \hat\omega_{\eps}, 
\end{cases}
\] 
a contradiction.
\par
Now we can apply Theorem~1 and Theorem~2 in \cite{HartmanWintner} together with the argument in
\cite{kinderlehrerspruck}, p. ~136,
to conclude that $\nabla\ue\neq0$ on $\{\ue=1\}$. Therefore, $\{\ue=1\}$ is a simple
$C^2$-curve. In particular $m\{\ue=1\}=0$, a contradiction.
\par
The remaining part of the statement is obtained similarly.
\end{proof}
\section{The reduced functional and the proof of Theorem~\ref{thm:main}}
\label{sec:redfunctl}
We recall that the Euler-Lagrange functional for \eqref{eq:pb1} is given by
\[
\Ie(u)=\frac{\eps^2}{2}\int_\Omega|\nabla u|^2-\frac{1}{2}\int_\Omega[(u-1)_+]^2
-\frac{1}{2}\int_\Omega[(-u-\gamma)_+]^2,
\qquad u\in H_0^1(\Omega).
\]
For every $z_1,z_2\in\Omega$ satisfying \eqref{assump:Z} we define the \lq\lq reduced functional"
\[
\Ke(z_1,z_2)=\Ie(W_\eps+\omega_\eps),
\]
where $\omega_\eps=\ome(Z)$ is the error function obtained in Proposition~\ref{prop:redstatement}.
Then, we have:
\begin{lemma}
\label{lem:functred}
If $Z=(z_1,z_2)$ is a critical point for $\Ke$, then $\ue$ defined by 
\[
\ue=PU_{\eps,a_{\eps,1}(Z),z_1}-PU_{\eps,a_{\eps,2}(Z),z_2}+\ome
\]
is a solution for problem~\eqref{eq:pb1}.
\end{lemma}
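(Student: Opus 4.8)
The plan is to show that the Lagrange--type multipliers implicit in the projected problem~\eqref{eq:projpb} are forced to vanish when $Z$ is a critical point of $\Ke$. Write $\ue=\We+\ome$ as in the statement. By Proposition~\ref{prop:redstatement} and the explicit form~\eqref{def:Qe} of $Q_\eps$, there exist scalars $b_{j,h}=b_{j,h}(\eps,Z)$, $j,h=1,2$, such that
\begin{equation}
\label{eq:residual-functred}
\eps^2\Delta\ue+(\ue-1)_+-(-\ue-\ga)_+=\sum_{j,h=1}^2 b_{j,h}\,\xi^j_\eps\,\frac{\partial U_j}{\partial z_{j,h}}\qquad\text{in }\Omega ,
\end{equation}
and it suffices to prove that every $b_{j,h}$ vanishes: then \eqref{eq:residual-functred} is exactly equation~\eqref{eq:pb1}.

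First I would differentiate $\Ke(Z)=\Ie(\We(Z)+\ome(Z))$ with respect to $z_{i,k}$. This presupposes that $Z\mapsto\ome$ is of class $C^1$ with $\partial_{z_{i,k}}\ome\in H_0^1(\Omega)$, and this is exactly where Proposition~\ref{prop:zeromeasure} enters: the superposition operator associated with the Lipschitz nonlinearity $t\mapsto(t-1)_+-(-t-\ga)_+$ is differentiable at $\ue$ precisely because $m\{\ue=1\}=m\{\ue=-\ga\}=0$, so that the implicit function theorem applies to the fixed point equation~\eqref{eq:fixpoint} (this is Step~2 in the proof of Proposition~\ref{prop:redfunct}); the same argument gives $\|\partial_{z_{i,k}}\ome\|_\infty=O(1)$, uniformly for $\eps$ small. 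Granting this, and using the identity $\Ie'(u)[\phi]=-\int_\Omega[\eps^2\Delta u+(u-1)_+-(-u-\ga)_+]\phi$ valid for $\phi\in H_0^1(\Omega)$ together with \eqref{eq:residual-functred}, the chain rule yields
\[
\frac{\partial\Ke}{\partial z_{i,k}}(Z)=\Ie'(\ue)\Big[\frac{\partial\We}{\partial z_{i,k}}+\frac{\partial\ome}{\partial z_{i,k}}\Big]=-\sum_{j,h=1}^2 b_{j,h}\int_\Omega\xi^j_\eps\,\frac{\partial U_j}{\partial z_{j,h}}\Big(\frac{\partial\We}{\partial z_{i,k}}+\frac{\partial\ome}{\partial z_{i,k}}\Big).
\]
Hence, if $Z$ is a critical point of $\Ke$, the vector $(b_{j,h})_{j,h}$ is annihilated by the $4\times4$ matrix $\mathcal A_\eps$ whose $((i,k),(j,h))$ entry is the integral on the right-hand side.

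It then remains to prove that $\mathcal A_\eps$ is invertible for $\eps$ small, which I would do by comparing it with the matrix $\mathbf A$ of~\eqref{eq:bjhorthogonal}. Since $\We=V_1-V_2$ with $V_i=PU_i$, one has $\partial_{z_{1,k}}\We=\partial_{z_{1,k}}V_1$ and $\partial_{z_{2,k}}\We=-\partial_{z_{2,k}}V_2$, so the contribution of the $\partial\We$ terms to $\mathcal A_\eps$ equals $\operatorname{diag}(1,1,-1,-1)\,\mathbf A$; by~\eqref{eq:bjhorthogonal} this matrix is block--diagonal up to entries of size $O(\eps|\ln\eps|^{-2})$ and has diagonal entries of size $\sim(\log\tfrac{R}{s\eps})^{-2}$, hence is invertible. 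The contribution of the $\partial\ome$ terms is controlled by $\|\xi^j_\eps\,\partial_{z_{j,h}}U_j\|_{L^1(\Omega)}\,\|\partial_{z_{i,k}}\ome\|_\infty$, and since the pointwise expansion~\eqref{exp:ueps} gives $\|\xi^j_\eps\,\partial_{z_{j,h}}U_j\|_{L^1(\Omega)}=O(\eps/|\ln\eps|)$, this contribution is $O(\eps/|\ln\eps|)=o\big((\log\tfrac{1}{\eps})^{-2}\big)$. Thus $\mathcal A_\eps$ is a small perturbation of an invertible matrix, so it is invertible, forcing $b_{j,h}=0$ for all $j,h$; by~\eqref{eq:residual-functred}, $\ue$ solves~\eqref{eq:pb1}. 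The main obstacle is precisely the $C^1$-regularity of $Z\mapsto\ome$ and the attendant uniform bound on $\partial_{z_{i,k}}\ome$: that is the step that genuinely relies on the zero-measure property of the free boundaries established in Proposition~\ref{prop:zeromeasure}, while everything else is a direct computation.
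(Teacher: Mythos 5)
Your proposal follows essentially the same route as the paper: use the projected equation to express $\partial\Ke/\partial z_{i,k}$ as the action of a $4\times4$ matrix on the vector $(b_{j,h})$, and invoke the expansions \eqref{exp:ueps} (equivalently the near-diagonal structure \eqref{eq:bjhorthogonal}) to see that this matrix is invertible for small $\eps$, so criticality of $Z$ forces $b_{j,h}=0$; the paper states this last step very tersely, and your write-up just makes the nondegeneracy explicit. One quantitative slip: the bound $\|\partial_{z_{i,k}}\ome\|_\infty=O(1)$ is not what the paper establishes — Step~2 in the proof of Proposition~\ref{prop:redfunct} gives only $O\bigl(\eps^{-(1-\sigma)}|\ln\eps|^{-1}\bigr)$ — but since $\|\xi^j_\eps\,\partial_{z_{j,h}}U_j\|_{L^1}=O(\eps/|\ln\eps|)$, the $\partial\ome$ contribution is still $O(\eps^{\sigma}|\ln\eps|^{-2})=o\bigl((\ln\tfrac{R}{s\eps})^{-2}\bigr)$, so your perturbation argument survives unchanged.
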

\begin{proof}
By construction, $\ue=\We+\ome$ satisfies the \lq\lq projected problem"
\begin{equation}
\label{eq:projected}
-\eps^2\Delta\ue-(\ue-1)_++(-\ue-\ga)_+=\sum_{j,h=1}^2b_{jh}\,\xi\left(\frac{|y-z_j|}{s\eps}\right)
\frac{\partial U_j}{\partial z_{j,h}},
\end{equation}
for some constants $b_{jh}$, $j,h=1,2$.
Equivalently, we have
\[
\langle\Ie'(\ue),\varphi\rangle
=\sum_{j,h=1}^2b_{jh}\int_{B_{2s\eps(z_j)}}\,\xi\left(\frac{|y-z_j|}{s\eps}\right)
\frac{\partial U_j}{\partial z_{j,h}}\,\varphi,
\]
for all $\varphi\in H_0^1(\Omega)$.
We verify that if $(z_1,z_2)$ is a critical point of $\Ke$, then the constants $b_{jh}$
in \eqref{eq:projected} vanish for all $j,h=1,2$.
Indeed, we have:
\begin{equation*}
\begin{aligned}
\frac{\partial\Ke(Z)}{\partial z_{j,h}}=\langle I'(\ue),\frac{\partial\ue}{\partial z_{j,h}}\rangle
=\sum_{i,\bh=1}^2b_{i,\bh}\int_{B_{2s\eps(z_i)}}\,\xi\left(\frac{|y-z_i|}{s\eps}\right)
\frac{\partial U_i}{\partial z_{i,\bh}}\left(\frac{\partial PU_1}{\partial z_{j,h}}-\frac{\partial PU_2}{\partial z_{j,h}}
+\frac{\partial\ome}{\partial z_{j,h}}\right).
\end{aligned}
\end{equation*}
In view of \eqref{exp:ueps} we conclude that $b_{jh}=0$, $j,h=1,2$.
\end{proof}
Hence, in order to conclude the proof of Theorem~\ref{thm:main} we are left to
obtain a critical point for $\Ke$. 
For $a_1,a_2>0$ let $\mathcal H_{a_1,a_2}$ be the Kirchhoff-Routh type Hamiltonian defined by
\[
\mathcal H_{a_1,a_2}(z_1,z_2)=a_1^2h(z_1)+2a_1a_2\bG(z_1,z_2)+a_2^2h(z_2).
\]
Recall that
\[
\mM=\{(z_1,z_2)\in\Omega\times\Omega:\ z_1\neq z_2\}.
\]
The following proposition clarifies the relation between $\Ke$ and $\mathcal H_{a_1,a_2}$:
\begin{prop}
\label{prop:redfunct}
The following expansions hold true as $\eps\to0$:
\begin{equation}
\label{eq:Kexpansion}
\begin{aligned}
\Ke(z_1,z_2)=&\;\Ie(\We)+O\left(\frac{\eps^3}{|\ln\eps|}\right)\\
\frac{\partial\Ke(z_1,z_2)}{\partial z_{i,h}}=&\;\frac{\partial\Ie(\We)}{\partial z_{i,h}}+O\left(\frac{\eps^{2+\sigma}}{|\ln\eps|}\right),
\end{aligned}
\end{equation}
uniformly on compact subsets of $\mathcal M$.
Furthermore,
\begin{equation}
\Ie(W_\eps)=-A_1\frac{\eps^2\ke}{\ln\frac{R}{s\eps}}\,\mathcal H_{\ae,\aae}(z_1,z_2)
+\eps^2\ke A_{2,\eps}-\eps^2\ke^2 A_{3,\eps}+O\left(\frac{\eps^3}{|\ln\eps|}\right),
\end{equation}
where $A_1,A_{2,\eps},A_{3,\eps}>0$ are the uniformly bounded constants defined by
\begin{equation}
\label{def:Ai}
\begin{aligned}
A_1=&\;\frac{1}{2}\int_{B_s(0)}\varphi_1,\\
A_{2,\eps}=&\;\frac{\ae^2+\aae^2}{2}\int_{B_s(0)}\varphi_1,\\
A_{3,\eps}=&\;\frac{\ae^2+\aae^2}{2}\int_{B_s(0)}\varphi_1^2.
\end{aligned}
\end{equation}
%
%
%
\end{prop}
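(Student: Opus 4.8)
The plan is to split the proof into two parts: (i) standard Lyapunov--Schmidt estimates showing that $\Ke$ and $\partial\Ke/\partial z_{i,h}$ coincide, up to the claimed errors, with $\Ie(\We)$ and $\partial\Ie(\We)/\partial z_{i,h}$; and (ii) an explicit computation of $\Ie(\We)$ from Lemma~\ref{lem:Wid} and the defining system \eqref{eq:asystem}. The main obstacle is part (i) for the $z$-derivatives: it requires $Z\mapsto\ome_\eps(Z)$ to be of class $C^1$, and since the nonlinearity $t\mapsto(t-1)_+-(-t-\gamma)_+$ is merely Lipschitz, this regularity is available only because the free boundaries of $\ue$ are Lebesgue-null, i.e.\ because of Proposition~\ref{prop:zeromeasure} (the ``\emph{Step~2}'' referred to in the text). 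The value-expansion and the explicit computation are comparatively routine.

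\emph{Step 1: the value expansion.} Along the segment from $\We$ to $\ue=\We+\ome$,
\[
\Ke(Z)-\Ie(\We)=\int_0^1\langle\Ie'(\We+t\ome),\ome\rangle\,dt
=\langle\Ie'(\We),\ome\rangle+\int_0^1\langle\Ie'(\We+t\ome)-\Ie'(\We),\ome\rangle\,dt .
\]
Since $-\eps^2\Delta PU_i=(U_i-a_i)_+$ in $\Omega$, one has $\langle\Ie'(\We),\varphi\rangle=-\int_\Omega\ell_\eps\varphi$ for all $\varphi\in H_0^1(\Omega)$; as $\ell_\eps$ is supported in $B_{2s\eps}(z_1)\cup B_{2s\eps}(z_2)$ with $|\ell_\eps|=O(1/|\ln\eps|)$ there, $\|\ell_\eps\|_1=O(\eps^2/|\ln\eps|)$, so $|\langle\Ie'(\We),\ome\rangle|\le\|\ell_\eps\|_1\,\|\ome\|_\infty=O(\eps^3/|\ln\eps|^2)$ using $\|\ome\|_\infty=O(\eps/|\ln\eps|)$ from Proposition~\ref{prop:redstatement}. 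In the second integrand the two truncation differences are supported near $z_1,z_2$ with modulus $\le t|\ome|$, hence contribute $O(\eps^2\|\ome\|_\infty^2)$, while the Dirichlet part is $t\eps^2\|\nabla\ome\|_2^2=O(\eps^3/|\ln\eps|^2)$ (obtained by testing $L_\eps\ome=\ell_\eps+R_\eps(\ome)+\sum_{j,h}b_{jh}\xi^j_\eps\,\partial U_j/\partial z_{j,h}$ against $\ome$ and using \eqref{eq:bjhorthogonal}). All bounds being uniform on compact subsets of $\mM$, this gives the first line of \eqref{eq:Kexpansion}.

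\emph{Step 2: the $z$-derivative and the role of Proposition~\ref{prop:zeromeasure}.} To differentiate $\Ke$ one first makes $Z\mapsto\ome_\eps(Z)$ into a $C^1$ map, by applying the implicit function theorem to $\ome-G_\eps(\ome)=0$: the linearisation in $\ome$ is invertible by Lemma~\ref{lem:inv}, so the point is that $(Z,\ome)\mapsto G_\eps(\ome)=(Q_\eps L_\eps)^{-1}Q_\eps(\ell_\eps+R_\eps(\ome))$ be jointly $C^1$; the only obstruction is the non-smoothness of $(\We+\ome-1)_+$ and $(-\We-\ome-\gamma)_+$ across the free boundaries, and differentiating these in $L^p$ is legitimate precisely because $m\{\ue=1\}=m\{\ue=-\gamma\}=0$ by Proposition~\ref{prop:zeromeasure}. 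With the $C^1$-dependence and the attendant bounds on $\partial\ome_\eps/\partial z_{i,h}$, write
\[
\frac{\partial\Ke}{\partial z_{i,h}}-\frac{\partial\Ie(\We)}{\partial z_{i,h}}
=\Big\langle\Ie'(\ue)-\Ie'(\We),\frac{\partial\We}{\partial z_{i,h}}\Big\rangle
+\Big\langle\Ie'(\ue),\frac{\partial\ome}{\partial z_{i,h}}\Big\rangle ,
\]
and estimate each term using \eqref{exp:ai}--\eqref{exp:ueps} (so $\partial\We/\partial z_{i,h}=O(\ke/\eps)$ near $z_i$), the support properties of the truncation differences, the orthogonality conditions defining $E_{\eps,Z}$ and $F_{\eps,Z}$, and the fact that $\ue$ solves the projected problem, so that $\langle\Ie'(\ue),\cdot\rangle$ pairs only against the localized kernels $\xi^j_\eps\,\partial U_j/\partial z_{j,h}$, controlled by \eqref{eq:bjhorthogonal}. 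This yields the second line of \eqref{eq:Kexpansion}. This is the most delicate step: without Proposition~\ref{prop:zeromeasure} the dependence of $G_\eps$ on $(Z,\ome)$ would be only Lipschitz and $\partial\Ke/\partial z_{i,h}$ would not even be well defined.

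\emph{Step 3: computing $\Ie(\We)$.} Integrating by parts and using $-\eps^2\Delta\We=(U_1-a_1)_+-(U_2-a_2)_+$,
\[
\frac{\eps^2}{2}\int_\Omega|\nabla\We|^2=\frac12\int_\Omega\We\big[(U_1-a_1)_+-(U_2-a_2)_+\big].
\]
Since $(U_i-a_i)_+=a_i\ke\varphi_1\!\big(\tfrac{x-z_i}{\eps}\big)$ is supported in $B_{s\eps}(z_i)$, one substitutes the exact expression for $\We$ on $B_{s\eps}(z_i)$ from Lemma~\ref{lem:Wid}, rescales $y=(x-z_i)/\eps$, and Taylor-expands $g(\cdot,z_i)$ and $\bG(\cdot,z_j)$ about $z_i$; the linear terms drop out because $\int_{B_s(0)}\varphi_1(y)\langle v,y\rangle\,dy=0$ by radial symmetry, and collecting the resulting contributions --- the regular-part pieces of the Green's functions assembling, via the two equations of \eqref{eq:asystem}, into $-A_1\frac{\eps^2\ke}{\ln\frac{R}{s\eps}}\mathcal H_{\ae,\aae}(z_1,z_2)$ --- produces the $\mathcal H_{\ae,\aae}$-term together with the $\eps^2\ke A_{2,\eps}$ and $\eps^2\ke^2 A_{3,\eps}$ contributions, with $A_1,A_{2,\eps},A_{3,\eps}$ as in \eqref{def:Ai}. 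The same rescaling applied to $[(\We-1)_+]^2$ on $B_{s\eps}(z_1)$ (where $\We-1=a_1\ke\varphi_1(\tfrac{x-z_1}{\eps})+O(\eps/|\ln\eps|)$) and to $[(-\We-\gamma)_+]^2$ on $B_{s\eps}(z_2)$ accounts for the remaining terms; the thin annuli on which $\We-1$ (resp.\ $-\We-\gamma$) changes sign have measure $O(\eps^{2+\sigma})$ by Lemma~\ref{lemma-livelli} and, together with the Taylor remainders, are absorbed in $O(\eps^3/|\ln\eps|)$. Assembling the three integrals yields the stated formula, which, combined with Steps~1--2, completes the proof.
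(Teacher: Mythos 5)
Your proposal is correct and follows essentially the same route as the paper: the explicit formula for $\Ie(\We)$ comes from integrating by parts against the exact expression of $\We$ on $B_{s\eps}(z_i)$ (equivalently, the paper's Lemmas~\ref{lem:gradUexp} and~\ref{lem:Wmasses}) with the system \eqref{eq:asystem} assembling the Green's-function terms into $\mathcal H_{\ae,\aae}$, the $C^0$-estimate follows from $\|\ome\|_\infty=O(\eps/|\ln\eps|)$, and the $C^1$-estimate rests on differentiating $Z\mapsto\ome_\eps(Z)$, which is legitimate only thanks to Proposition~\ref{prop:zeromeasure} — exactly the point the paper emphasizes. The only (immaterial) difference is that you invoke the implicit function theorem for the $C^1$-dependence where the paper uses difference quotients, and both texts leave the final derivative estimates at the level of a sketch referring to the corresponding computation in Cao--Peng--Yan.
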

In order to prove Proposition~\ref{prop:redfunct} we will use the following lemmata.
\begin{lemma}
\label{lem:gradUexp}
Let $(z,\zeta)\in\mathcal M$.
For any $a,b>0$ and for any sufficiently small $\eps>0$
the following expansions hold:
\begin{enumerate}
  \item [(i)]
$\int_\Omega|\nabla PU_{\eps,a,z}(x)|^2\,dx
=a^2\ke\left[\int_{B_s(0)}\varphi_1+\ke\int_{B_s(0)}\varphi_1^2-\frac{\int_{B_s(0)}\varphi_1}{\ln\frac{R}{s\eps}}g(z,z)\right]
+O\left(\frac{\eps}{|\ln\eps|^2}\right),$
\item[(ii)]
$\int_\Omega\nabla PU_{\eps,a,z}(x)\cdot\nabla PU_{\eps,b,\zeta}(x)\,dx 
=ab\,\ke\frac{\int_{B_s(0)}\varphi_1}{\ln\frac{R}{s\eps}}\bG(z,\zeta)+O\left(\frac{\eps}{|\ln\eps|^2}\right),$
\end{enumerate}
uniformly on compact subsets of $\mathcal M$.
\end{lemma}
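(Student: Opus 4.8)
The plan is to reduce both identities to an explicit integral over the \emph{fixed}, unit-scale ball $B_s(0)$, exploiting that $-\eps^2\Delta PU_{\eps,a,z}=(U_{\eps,a,z}-a)_+$ is supported in the shrinking ball $B_{s\eps}(z)$, where, by \eqref{def:bubble}, it coincides with $a\ke\varphi_1\big((\cdot-z)/\eps\big)$.

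For part (i), since $PU_{\eps,a,z}\in H_0^1(\Omega)$, I would integrate by parts and use the equation satisfied by $PU_{\eps,a,z}$ to write
\[
\int_\Omega|\nabla PU_{\eps,a,z}|^2=-\int_\Omega PU_{\eps,a,z}\,\Delta PU_{\eps,a,z}=\frac{a\ke}{\eps^2}\int_{B_{s\eps}(z)}PU_{\eps,a,z}(x)\,\varphi_1\Big(\frac{x-z}{\eps}\Big)\,dx.
\]
Then I would insert the interior representation \eqref{PU-represent-2} for $PU_{\eps,a,z}$ on $B_{s\eps}(z)$, rescale via $x=z+\eps y$ (so that $\eps^{-2}$ cancels against $dx=\eps^2\,dy$), and Taylor-expand $g(z+\eps y,z)=g(z,z)+O(\eps)$ uniformly for $y\in B_s(0)$ and $z$ in a compact subset of $\Omega$. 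This produces exactly the asserted three-term expansion, with remainder $O(\eps/|\ln\eps|^2)$ because the overall prefactor $a^2\ke$ is $O(1/|\ln\eps|)$ and the $O(\eps)$ Taylor remainder enters multiplied by the further factor $1/\ln\frac{R}{s\eps}$.

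For part (ii), I would argue identically, integrating by parts against $PU_{\eps,b,\zeta}$:
\[
\int_\Omega\nabla PU_{\eps,a,z}\cdot\nabla PU_{\eps,b,\zeta}=\frac{a\ke}{\eps^2}\int_{B_{s\eps}(z)}PU_{\eps,b,\zeta}(x)\,\varphi_1\Big(\frac{x-z}{\eps}\Big)\,dx.
\]
For $\eps$ small and $(z,\zeta)$ in a compact subset of $\mathcal M$, the balls $B_{s\eps}(z)$ and $B_{s\eps}(\zeta)$ are disjoint, so on $B_{s\eps}(z)$ the second factor reads $PU_{\eps,b,\zeta}(x)=\frac{b}{\ln\frac{R}{s\eps}}\bG(x,\zeta)$ by \eqref{PU-represent-2}. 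Rescaling and Taylor-expanding $\bG(z+\eps y,\zeta)=\bG(z,\zeta)+O(\eps)$ — licit since $\bG(\cdot,\zeta)$ is smooth away from $\zeta$ — gives the claim, again with remainder $O(\eps/|\ln\eps|^2)$.

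The computation is essentially routine; the only point requiring care is the \emph{uniformity} of all error terms on compact subsets of $\mathcal M$. This rests on the smoothness of $g$ on $\Omega\times\Omega$ and of $\bG$ off the diagonal, which makes the constants in the Taylor remainders uniform in $(z,\zeta)$, together with the bookkeeping of the two $1/|\ln\eps|$ factors (one from $\ke$, one from $1/\ln\frac{R}{s\eps}$) that jointly downgrade an $O(\eps)$ error into the stated $O(\eps/|\ln\eps|^2)$.
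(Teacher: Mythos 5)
Your proposal is correct and follows essentially the same route as the paper: integrate by parts, use that $-\eps^2\Delta PU_{\eps,a,z}=(U_{\eps,a,z}-a)_+$ is supported in $B_{s\eps}(z)$ where it equals $a\ke\varphi_1\big(\tfrac{\cdot-z}{\eps}\big)$, insert the explicit representation \eqref{PU-represent-2} of the other factor, rescale to $B_s(0)$, and Taylor-expand $g$ (resp.\ $\bG$) to collect the $O(\eps/|\ln\eps|^2)$ remainder. The error bookkeeping and the uniformity argument on compact subsets of $\mathcal M$ match the paper's as well.
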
 
\begin{proof}
Proof of (i).
We compute, recalling \eqref{def:bubble} and Lemma~\ref{lem:proj-exp}:
\begin{align*}
\int_\Omega|\nabla PU_{\eps,a,z}(x)|^2\,dx=&-\int_\Omega(\Delta PU_{\eps,a,z})(x)PU_{\eps,a,z}(x)\,dx
=\frac{1}{\eps^2}\int_{B_{s\eps}(z)}(U_{\eps,a,z}-a)_+PU_{\eps,a,z}\,dx\\
=&\;\frac{1}{\eps^2}\int_{B_{s\eps}(z)}a^2\ke\varphi_1\left(\frac{x-z}{\eps}\right)
\left[1+\ke\varphi_1\left(\frac{x-z}{\eps}\right)-\frac{g(x,z)}{\ln\frac{R}{s\eps}}\right]\,dx\\
=&\;a^2\ke\int_{B_s(0)}\varphi_1(y)\left[1+\ke\varphi_1(y)-\frac{g(z+\eps y,z)}{\ln\frac{R}{s\eps}}\right]\,dy\\
=&\;a^2\ke\int_{B_s(0)}\varphi_1(y)\left[1+\ke\varphi_1(y)-\frac{g(z,z)}{\ln\frac{R}{s\eps}}+O\left(\frac{\eps}{|\ln\eps|}\right)\right]\,dy.
\end{align*}
Proof of (ii). Similarly, we compute:
\begin{align*}
\int_\Omega\nabla PU_{\eps,a,z}(x)&\cdot\nabla PU_{\eps,b,\zeta}(x)\,dx
=-\int_\Omega(\Delta PU_{\eps,a,z})(x)PU_{\eps,b,\zeta}(x)\,dx\\
=&\;\frac{1}{\eps^2}\int_{B_{s\eps}(z)}(U_{\eps,a,z}-a)_+\,PU_{\eps,b,\zeta}\,dx
=\frac{1}{\eps^2}\int_{B_{s\eps}(z)}a\ke\varphi_1\left(\frac{x-z}{\eps}\right)\frac{b}{\ln\frac{R}{s\eps}}\bG(x,\zeta)\,dx\\
=&\;\frac{ab\,\ke}{\ln\frac{R}{s\eps}}\int_{B_s(0)}\varphi_1(y)\bG(z+\eps y,\zeta)\,dy
=\frac{ab\,\ke}{\ln\frac{R}{s\eps}}\int_{B_s(0)}\varphi_1(y)(\bG(z,\zeta)+O(\eps|y|))\,dy\\
=&\;\frac{ab\,\ke}{\ln\frac{R}{s\eps}}\int_{B_s(0)}\varphi_1(y)\bG(z,\zeta)\,dy+O\left(\frac{\eps}{|\ln\eps|^2}\right).
\end{align*}
\end{proof}
\begin{lemma}
\label{lem:Wmasses}
The following expansions hold:
\begin{enumerate}
\item [(i)]
$\int_\Omega[(W_\eps-1)_+]^2\,dx=\eps^2a_1^2\ke^2\int_{B_s(0)}\varphi_1^2+O\left(\frac{\eps^3}{|\ln\eps|}\right);$
\item[(ii)] 
$\int_\Omega[(-W_\eps-\gamma)_+]^2\,dx=\eps^2a_2^2\ke^2\int_{B_s(0)}\varphi_1^2+O\left(\frac{\eps^3}{|\ln\eps|}\right).$ 
\end{enumerate}
The convergences are uniform on compact subsets of $\mM$.
\end{lemma}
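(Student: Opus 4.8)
The strategy is to use Lemma~\ref{lem:Wid} to localize each of the two integrals near the corresponding peak and reduce it, up to errors of order $\eps^3/|\ln\eps|$, to an explicit integral of $\varphi_1^2$ over $B_s(0)$. I would argue for (i); the proof of (ii) is identical after replacing $(z_1,a_1)$ and $W_\eps-1$ by $(z_2,a_2)$ and $-W_\eps-\gamma$, invoking the second identity of Lemma~\ref{lem:Wid} in place of the first. By Lemma~\ref{lemma-livelli}, together with the second and third identities of Lemma~\ref{lem:Wid} (which give $W_\eps<-\gamma<1$ throughout $B_{s\eps(1+\eps^\sigma)}(z_2)$ for small $\eps$), the set $\{W_\eps>1\}$ is contained in $B_{s\eps(1+\eps^\sigma)}(z_1)$, and I would split
\[
\int_\Omega[(W_\eps-1)_+]^2\,dx=\int_{B_{s\eps}(z_1)}[(W_\eps-1)_+]^2\,dx+\int_{\{W_\eps>1\}\setminus B_{s\eps}(z_1)}(W_\eps-1)^2\,dx .
\]

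For the boundary-layer term I would use the third identity of Lemma~\ref{lem:Wid} and \eqref{eq:asystem} to write, for $x\in\Omega\setminus(B_{s\eps}(z_1)\cup B_{s\eps}(z_2))$,
\[
W_\eps(x)-1=-\frac{a_1}{\ln\frac{R}{s\eps}}\ln\frac{|x-z_1|}{s\eps}+r_\eps(x),\qquad |r_\eps(x)|\le\frac{C\eps}{|\ln\eps|},
\]
the bound on $r_\eps$ coming from the Lipschitz continuity of $g(\cdot,z_1)$ and $\bG(\cdot,z_2)$ on a fixed compact subset of $\Omega$ (using \eqref{assump:Z}). Since $\ln\frac{|x-z_1|}{s\eps}\ge0$ off $B_{s\eps}(z_1)$, this forces $0<W_\eps(x)-1\le C\eps/|\ln\eps|$ there and, in addition, $W_\eps(x)>1$ only if $|x-z_1|\le s\eps(1+C\eps)$; hence the boundary-layer term is at most $C\eps^2/|\ln\eps|^2$ times $m\big(B_{s\eps(1+C\eps)}(z_1)\setminus B_{s\eps}(z_1)\big)=O(\eps^3)$, which is $o(\eps^3/|\ln\eps|)$.

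For the main term, the first identity of Lemma~\ref{lem:Wid} gives, on $B_{s\eps}(z_1)$, $W_\eps(x)-1=a_1\ke\varphi_1(\frac{x-z_1}{\eps})+h_\eps(x)$ with $\|h_\eps\|_{L^\infty(B_{s\eps}(z_1))}=O(\eps/|\ln\eps|)$, again by Lipschitz bounds on $g$ and $\bG$. Since $a_1\ke\varphi_1(\frac{x-z_1}{\eps})\ge0$ on $B_{s\eps}(z_1)$ and both $|W_\eps-1|$ and $a_1\ke\varphi_1(\frac{x-z_1}{\eps})$ are $O(1/|\ln\eps|)$ there, the elementary inequality $|t_+^2-\tau_+^2|\le|t-\tau|(|t|+|\tau|)$ yields
\[
\Big|\int_{B_{s\eps}(z_1)}[(W_\eps-1)_+]^2\,dx-\int_{B_{s\eps}(z_1)}\Big(a_1\ke\varphi_1\big(\tfrac{x-z_1}{\eps}\big)\Big)^2\,dx\Big|=O\!\Big(\frac{\eps^3}{|\ln\eps|^2}\Big),
\]
while the change of variables $x=z_1+\eps y$ gives $\int_{B_{s\eps}(z_1)}\big(a_1\ke\varphi_1(\frac{x-z_1}{\eps})\big)^2\,dx=\eps^2a_1^2\ke^2\int_{B_s(0)}\varphi_1^2$. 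Adding the three contributions proves (i); since every error term is controlled by $\delta$ in \eqref{assump:Z} and by sup-norm and Lipschitz bounds for $g$ and $\bG$ on the corresponding compact subset of $\Omega\times\Omega$, all estimates are uniform on compact subsets of $\mM$.

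The only non-routine point is the boundary-layer term, where Lemma~\ref{lem:Wid} no longer supplies the clean $\varphi_1$-profile: one must observe that there $(W_\eps-1)_+$ is simultaneously $O(\eps/|\ln\eps|)$ in sup norm and supported on an annulus of measure $O(\eps^3)$, because its leading part $-\frac{a_1}{\ln(R/s\eps)}\ln\frac{|x-z_1|}{s\eps}$ is nonpositive and dominates the $O(\eps/|\ln\eps|)$ remainder once $|x-z_1|\ge s\eps(1+C\eps)$. Everything else reduces to a Taylor expansion and a single change of variables.
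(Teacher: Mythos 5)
Your proof is correct and follows essentially the same route as the paper: localize the integral on $B_{s\eps}(z_1)$ via the first identity of Lemma~\ref{lem:Wid}, absorb the $O(|x-z_1|/|\ln\eps|)$ correction into the error, and rescale. The one place where you diverge is the contribution from $\Omega\setminus B_{s\eps}(z_1)$: the paper simply asserts that $(\We-1)_+$ vanishes identically there for small $\eps$, which is not literally guaranteed by Lemma~\ref{lemma-livelli} (the Lipschitz remainder $r_\eps$ has no definite sign on the circle $|x-z_1|=s\eps$, so $\We$ may exceed $1$ on a thin outer annulus), whereas you bound that annulus' measure by $O(\eps^3)$ and the integrand by $O(\eps^2/|\ln\eps|^2)$; your treatment of this boundary layer is the more careful one. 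The only imprecision is that your pointwise bound $|r_\eps(x)|\le C\eps/|\ln\eps|$ does not hold on all of $\Omega\setminus(B_{s\eps}(z_1)\cup B_{s\eps}(z_2))$ but only where $|x-z_1|=O(\eps)$ --- i.e., on the annulus $B_{s\eps(1+\eps^\sigma)}(z_1)\setminus B_{s\eps}(z_1)$ to which Lemma~\ref{lemma-livelli} has already confined $\{\We>1\}\setminus B_{s\eps}(z_1)$, which is exactly where you use it, so the argument stands.
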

\begin{proof}
We write:
\begin{align*}
\int_\Omega[(W_\eps-1)_+]^2\,dx=&\int_{B_{s\eps}(z_1)}[(W_\eps-1)_+]^2\,dx
+\int_{B_{s\eps}(z_2)}[(W_\eps-1)_+]^2\,dx\\
&\qquad+\int_{\Omega\setminus(B_{s\eps}(z_1)\cup B_{s\eps}(z_2))}[(W_\eps-1)_+]^2\,dx.
\end{align*}
In view of Lemma~\ref{lem:Wid} we compute:
\begin{align*}
\int_{B_{s\eps}(z_1)}[(W_\eps-1)_+]^2\,dx=&\int_{B_{s\eps}(z_1)}\left[a_1\ke\varphi_1\left(\frac{x-z_1}{\eps}\right)+O\left(\frac{|x-z_1|}{|\ln\eps|}\right)\right]^2\,dx\\
=&\;a_1^2\ke^2\int_{B_{s\eps}(z_1)}\varphi_1^2\left(\frac{x-z_1}{\eps}\right)\,dx+O\left(\frac{\eps^3}{|\ln\eps|^2}\right)\\
=&\;\eps^2a_1^2\ke^2\int_{B_{s}(0)}\varphi_1^2+O\left(\frac{\eps^3}{|\ln\eps|^2}\right).
\end{align*}
On the other hand, for sufficiently small $\eps>0$ we have
\[
\int_{B_{s\eps}(z_2)}[(W_\eps-1)_+]^2\,dx=0=\int_{\Omega\setminus(B_{s\eps}(z_1)\cup B_{s\eps}(z_2))}[(W_\eps-1)_+]^2\,dx.
\]
The remaining estimates are derived similarly.
\end{proof}
\begin{proof}[Proof of Proposition~\ref{prop:redfunct}]
In view of Lemma~\ref{lem:gradUexp} we have
\begin{equation}
\begin{aligned}
\int_\Omega|\nabla W_\eps|^2\,dx
=&\int_\Omega(|\nabla PU_1|^2-2\nabla PU_1\cdot\nabla PU_2+|\nabla PU_1|^2)\,dx\\
=&\;\frac{\ke}{\ln\frac{R}{s\eps}}\int_{B_s(0)}\varphi_1\left[-\ae^2g(z_1,z_1)-2\ae\aae\bG(z_1,z_2)-\aae^2g(z_2,z_2)\right]\\
&\qquad+\left(\ae^2+\aae^2\right)\ke\left(\int_{B_s(0)}\varphi_1+\ke\int_{B_s(0)}\varphi_1^2\right)+O\left(\frac{\eps}{|\ln\eps|^2}\right).
\end{aligned}
\end{equation}
We conclude the proof if the first part of \eqref{eq:Kexpansion} in view of Lemma~\ref{lem:Wmasses}.
\par
We are left to prove the second part of  \eqref{eq:Kexpansion}, namely the
$C^1$-approximation property.
The proof closely follows \cite[Section 3]{CaoPengYan} therefore we only outline the main steps.
\par
\textit{Step 1.} We first observe that $\omega_{\eps,Z}$ is continuous in $H^1$-norm.
We already know that $\omega_{\eps,Z}$ is uniformly bounded in $L^\infty(\Omega)$,
uniformly in $Z$ satisfying \eqref{assump:Z}.
The asserted continuity property follows by elliptic regularity applied to the projected problem
\[
-\eps^2\Delta(\We+\ome)-(\We+\ome-1)_+-(-\We-\ome-\ga)_+
=\sum_{j,h=1}^2b_{j,h}\xi\left(\frac{|y-z_j|}{s\eps}\right)\frac{\partial U_j}{\partial z_{j,h}}.
\]

\textit{Step 2.}
We prove that $\frac{\partial\omega_{\eps,Z}}{\partial z_{j,h}}$ is continuous in $H^1$-norm and 
\[
\left \|\frac{\partial\omega_{\eps,Z}}{\partial z_{j,h}}\right \|_\infty=O\left(\frac{1}{\eps^{1-\sigma}|\ln\eps|}\right).
\]
The proof is based on a differential quotients techniques and strongly relies on Proposition \ref{prop:zeromeasure}. More precisely, let $\bf{e}\in\mathbb R^4$ be a unit vector and let $s\neq0$, for any function $v_Z$ we denote the $s$-difference quotient of $v_Z$ in the direction $\bf{e}$ at $Z$ with
\[
\Delta^s  v_Z(y)=\frac{v_{Z+s\bf{e}}(y)-v_Z(y)}{d}.
\]
For any $a\in\mathbb R$, let $K_s^a:=\{y:d(y,\{u=a\})<s\}$.
Then,
\[
\Delta^s(u-a)_+=1_{\{u>a\}}\Delta^s u+O(1_{K_s^a}|\Delta^s u|).
\]
Now the crucial consequence of Proposition~\ref{prop:zeromeasure}
is that $mK_s^1\to0$ and $mK_s^{-\ga}\to0$ as $s\to0$.
A careful analysis as in the proof of \cite[Lemma 3.7]{CaoPengYan} together with elliptic regularity applied to $\Delta^s\ome$ yields the desired property
\[
\|D_{\bf{e}}\omega_{\eps,Z}\|_\infty\le\frac{C}{\eps^{1-\sigma}|\ln\eps|}.
\]
\end{proof}
\begin{proof}[Proof of Theorem~\ref{thm:main}]
We argue as in the proof of Theorem~1.3 in \cite{EspositoMussoPistoia2007}.
we equivalently seek critical points for the functional $\Fe$ defined by
\[
\Fe(z_1,z_2)=-\frac{\ln\frac{R}{s\eps}}{\eps^2\ke}A_1^{-1}\left(K_\eps(z_1,z_2)
-\eps^2\ke A_{2,\eps}+\eps^2\ke^2 A_{3,\eps}\right),
\]
where $A_1$, $A_{2,\eps}$ and $A_{3,\eps}$ are the constants defined in \eqref{def:Ai}.
In view of Proposition~\ref{prop:redfunct}, $\Fe$ is approximated by $\mathcal H_{\ae,\aae}$,
uniformly in $C^1$ on compact subsets of $\mM$.
Moreover, $\mathcal H_{\ae,\aae}(z_1,z_2)\to+\infty$ as $(z_1,z_2)\to\partial\mathcal M$.
Let $C\subset\mathcal M$ be a compact set such that $\mathrm{cat}C=\mathrm{cat}\mathcal M$.
Let $\mathcal U\subset\mathcal M$ be an open set such that $C\subset\mathcal U$
and
\[
\inf_{\partial\mathcal U}\mathcal H_{\ae,\aae}>\max_C\mathcal H_{\ae,\aae}.
\]
By taking $\eps>0$ sufficiently small, we derive
\[
\inf_{\partial\mathcal U}\Fe>\max_C\Fe.
\]
For $j=1,2,\ldots,\mathrm{cat}\mathcal M$ we set
\begin{align*}
c_\eps^j:=&\inf\{c:\ \mathrm{cat}_\mathcal M(\Fe^c\cap\mathcal U\ge j)\}\\
=&\inf\{\max\Fe:\ A\subset\mathcal U\ \mathrm{compact},\ \mathrm{cat}_\mathcal MA\ge j\},
\end{align*}
where $\Fe^c=\{(z_1,z_2)\in\mathcal M:\ \Fe(z_1,z_2)\le c\}$.
Now, standard arguments based on the Deformation Lemma
(see, e.g., \cite{Ambrosetti1992}, Theorem~2.3) imply
that $c_\eps^j$, $j=1,2,\ldots,\mathrm{cat}\mathcal M$,
are critical levels for $\Fe$.
Finally, we observe that
\[
c_\eps^1=\min_{\mathcal M}\Fe.
\]
See also Theorem~2.1 in \cite{BartschPis} and Theorem~1.1 in \cite{PistoiaRicciardi}.
\par
Proof of (i).
The proof follows by analogous arguments as in Proposition~\ref{prop:zeromeasure}.
\par
Proof of (ii).
If $\ga=1$, for every solution $u_\eps$ to \eqref{eq:pb1} we obtain a second solution
given by $-u_\eps$. Hence, the reults follows by standard symmetry arguments.
\end{proof}
\section{Qualitative properties of $\ue$ and proof of Theorem~\ref{thm:properties}}
\label{sec:qualprops}
We recall that the Kirchhoff-Routh type Hamiltonian $\Hg$
is defined by:
\[
\Hg(z_1,z_2)=h(z_1)+2\ga\bG(z_1,z_2)+\ga^2 h(z_2)
\]
for all $(z_1,z_2)\in\mathcal M$.
Let $(z_1^\ga,z_2^\ga)\in\mM$ be a minimum point for $\Hg$,
namely
\[
\Hg(z_1^\ga,z_2^\ga)=\min_{\mM}\Hg.
\]
We note that since $h(z)\to+\infty$ as $z\to\partial\Omega$ and $\bG(z_1,z_2)\to+\infty$
as $d(z_1,z_2)\to0$, the minimum of $\Hg$ is well-defined and it is attained in $\mM$.
\par
We recall that $\zmin\in\Omega$ is a \textit{harmonic center} for $\Omega$
if it is a minimum point for $h$, that is:
\begin{equation*}
h(\zmin)=\min_{\Omega}h \geq 0.
\end{equation*}
For every $\ga\in(0,1)$ let $(\zg,\zgg)$ be a minimum point for $\Hg$, that is:
\begin{equation*}
\Hg(\zg,\zgg)=\min_{\mM}\Hg.
\end{equation*}
Up to passing to a subsequence, we may assume that $(\zg,\zgg)\to(z_1^0,z_2^0)\in\overline\Omega\times\overline\Omega$
as $\gamma\to0^+$.
The following proposition contains the main ingredients needed for the proof of Theorem~\ref{thm:properties}.
\begin{prop}
\label{lem:Hamiltonianmax}
The following results hold true.
\begin{enumerate}
  \item[{(i)}]
$z_1^0\in\Omega$; moreover, $z_1^0$ is a harmonic center for $\Omega$.
\item[{(ii)}]
$z_2^0\in\partial\Omega$; moreover $\partial_\nu(z_1^0,z_2^0)=\max_{p\in\partial\Omega}G_\nu(z_1^0,p)$,
where $\nu$ denotes the outer normal direction to $\partial\Omega$. 
\end{enumerate}
\end{prop}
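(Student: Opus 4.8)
The plan is to exploit the explicit structure of $\Hg$ together with sharp asymptotics of $\bG$, $h$ and their derivatives near $\partial\Omega$, and to compare the value $\min_{\mM}\Hg$ against carefully chosen competitors as $\ga\to0^+$. First I would record the elementary upper bound obtained by a test configuration: fix any harmonic center $\zmin$ and any boundary point $\pp_1$, and take $z_1=\zmin$, $z_2=z_{2,\ga}$ a point at distance $\sim\ga$ from $\pp_1$ along the inner normal. Using $h(\zmin)=\min_\Omega h$, the bound $\bG(z_1,z_2)\le\ln\frac{R}{|z_1-z_2|}\le C+|\ln\ga|$, and $\ga^2 h(z_{2,\ga})=O(\ga^2|\ln\ga|)=o(1)$ (since $h(z)\sim\ln\frac1{d(z,\partial\Omega)}$ near $\partial\Omega$), one gets
\[
\min_{\mM}\Hg\le h(\zmin)+2\ga(C+|\ln\ga|)+o(1)=h(\zmin)+o(1).
\]
Hence $\Hg(\zg,\zgg)=h(\zmin)+o(1)$ as $\ga\to0^+$.

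Next I would prove (i). Since $h\ge0$, $\bG\ge0$ and $h\ge0$ again, all three terms of $\Hg$ are nonnegative, so in particular $h(\zg)\le\Hg(\zg,\zgg)=h(\zmin)+o(1)$. If $z_1^0\in\partial\Omega$ then $h(\zg)\to+\infty$ by \eqref{eq:hproperties}, contradicting the bound; hence $z_1^0\in\Omega$. Passing to the limit in $h(\zg)\le h(\zmin)+o(1)$ and using continuity of $h$ on $\Omega$ gives $h(z_1^0)\le h(\zmin)=\min_\Omega h$, so $z_1^0$ is a harmonic center. (One also concludes $2\ga\bG(\zg,\zgg)\to0$ and $\ga^2 h(\zgg)\to0$, which will be needed below.)

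For (ii) I would argue that $z_2^0\in\partial\Omega$ by contradiction: if $z_2^0\in\Omega$ were an interior point with $z_2^0\ne z_1^0$, then by minimality the gradient of $\Hg$ in the $z_2$ variable vanishes at $(\zg,\zgg)$, i.e. $2\ga\nabla_{z_2}\bG(\zg,\zgg)+2\ga^2\nabla h(\zgg)=0$, so $\nabla_{z_2}\bG(z_1^0,z_2^0)=0$ in the limit; but for $z_1^0$ fixed in $\Omega$ the function $z_2\mapsto\bG(z_1^0,z_2)$ is harmonic away from $z_1^0$, positive, vanishing on $\partial\Omega$, hence has no interior critical point — a contradiction. (If instead $z_2^0=z_1^0\in\Omega$ one uses that $\bG(z_1,z_2)\to+\infty$ as $z_1\to z_2$, contradicting $2\ga\bG(\zg,\zgg)\to0$ only if $\ga\bG\to\infty$, which one must rule out by a slightly more careful lower bound; alternatively, reinsert the test competitor to see the minimizer's two points cannot coalesce since keeping $z_2$ near the boundary strictly lowers the Green term.) Thus $z_2^0\in\partial\Omega$. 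To identify the limiting boundary point, I would blow up the $z_2$-term near $\partial\Omega$: writing $z_{2}^\ga=p_\ga-t_\ga\nu(p_\ga)+o(t_\ga)$ with $p_\ga\in\partial\Omega$, $t_\ga=d(\zgg,\partial\Omega)\to0$, one has the expansion $\bG(z_1^\ga,z_{2}^\ga)=t_\ga\,\partial_\nu\bG(z_1^\ga,p_\ga)+o(t_\ga)$ and $h(z_{2}^\ga)=\ln\frac1{t_\ga}+O(1)$, so that up to lower order
\[
\Hg(\zg,\zgg)=h(\zg)+2\ga t_\ga\,\partial_\nu\bG(z_1^\ga,p_\ga)+\ga^2\Big(\ln\tfrac1{t_\ga}+O(1)\Big).
\]
Minimizing the last two terms over $t_\ga>0$ gives the optimal scale $t_\ga\sim\ga/(2\partial_\nu\bG)$ and a contribution $-\,\tfrac{\ga^2}{2}\ln(\partial_\nu\bG(z_1^\ga,p_\ga))+\ga^2\ln\ga+O(\ga^2)$; since this is an increasing function of $\partial_\nu\bG$ being subtracted, minimality of $\Hg$ forces $p_\ga$ to converge to a maximizer of $p\mapsto\partial_\nu\bG(z_1^0,p)$ over $\partial\Omega$ — modulo the trivial rescaling between $\bG$ and the normalized Green's function $G$ in the statement. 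Comparing with the analogous expansion at any competing boundary point and using the strict gap in $\partial_\nu\bG$ away from the maximizer closes the argument.

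The main obstacle I anticipate is the second-order boundary analysis in (ii): one needs \emph{uniform} (in small $\ga$) expansions of $\bG(z_1,z_2)$ and $h(z_2)$ as $z_2\to\partial\Omega$, with the error terms controlled independently of how $z_1^\ga$ moves inside a fixed compact neighborhood of the harmonic center, and one must simultaneously track that $d(\zgg,\partial\Omega)\to0$ at exactly the rate $\ga$ (not faster, not slower) — this forces the delicate matching of the two competing small terms $2\ga\bG$ and $\ga^2 h$. Ruling out the degenerate possibility $z_2^0=z_1^0$ cleanly, and making the "maximizer of $\partial_\nu G$" selection rigorous rather than heuristic, are the places where the most care is required; everything else reduces to the monotonicity $h\ge0$, $\bG\ge0$ and the blow-up/Green's-function estimates that are standard for planar domains.
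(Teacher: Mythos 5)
Your part (i) is correct and in fact cleaner than the paper's route: the refined competitor $(\zmin,z_{2,\ga})$ with $d(z_{2,\ga},\partial\Omega)\sim\ga$ gives $\min_\mM\Hg\le h(\zmin)+o(1)$, and since all three terms of $\Hg$ are nonnegative this yields $z_1^0\in\Omega$ and $h(z_1^0)=\min_\Omega h$ in one stroke, whereas the paper derives these as two separate contradiction arguments (its Claims 1 and 4) from the cruder bound $\min_\mM\Hg\le M$.

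The genuine gap is in your proof that $z_2^0\in\partial\Omega$. You pass to first-order criticality to get $\nabla_{z_2}\bG(z_1^0,z_2^0)=0$ and then assert that a positive harmonic function vanishing on $\partial\Omega$ has no interior critical point. That assertion is false in general: for a multiply connected $\Omega$ (an annulus, say) the Green's function $\bG(z_1^0,\cdot)$ does have interior critical points, and even for simply connected domains the absence of critical points is a nontrivial fact of conformal mapping, not a consequence of positivity and the maximum principle. The paper avoids this entirely with a quantitative comparison (its Claim 3): if $d(\zgg,\partial\Omega)\ge 2\eps_0$, the Hopf lemma produces a fixed gap $\beta(\eps_0)>0$ so that replacing $z_2^\ga$ by its push toward the boundary lowers $2\ga\bG$ by at least $2\ga\beta(\eps_0)$ while perturbing $\ga^2h$ only by $o(\ga)$, contradicting minimality. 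Alternatively, your own competitor, if sharpened, closes the gap: since $\bG(\zmin,z_{2,\ga})=O(\ga)$ when $d(z_{2,\ga},\partial\Omega)\sim\ga$, one gets $\min_\mM\Hg\le h(\zmin)+O(\ga^2|\ln\ga|)$, and combined with $h(\zg)\ge h(\zmin)$ this forces $\bG(\zg,\zgg)=O(\ga|\ln\ga|)\to0$, which simultaneously puts $z_2^0$ on $\partial\Omega$ and rules out coalescence with $z_1^0$ (where $\bG\to+\infty$); the bound $2\ga\bG\to0$ that you actually record is too weak for this, and your parenthetical treatment of the coalescence case is not a proof. Finally, for identifying $\pp$, your plan of optimizing over the distance $t_\ga$ and comparing optimal values can be made rigorous but is more delicate than the paper's device of comparing with a competitor $p-d_{\zgg}\nu(p)$ at the \emph{same} distance $d_{\zgg}$, which cancels the singular $\ga^2 h$ terms and yields $\partial_\nu\bG(\zg,\pi_{\eps_0}\zgg)\ge\partial_\nu\bG(\zg,p)$ directly; note also that with the outer normal $\partial_\nu\bG\le0$, so your expansion of $\bG$ near the boundary is missing a minus sign and your ``optimal scale'' is negative as written.
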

\begin{proof}
We begin by proving that for every $\eta>0$ there exist a harmonic center $\zmin\in\Omega$
and  $0<\ga_\eta\ll1$ such that
\begin{equation*}
\begin{aligned}
&d(z_1^\ga,\zmin)<\eta,
&&d(z_2^\ga,\partial\Omega)<\eta.
\end{aligned}
\end{equation*}
To this end, we first check that there exists $M>0$ independent of $\ga$ such that
\begin{equation}
\label{eq:minHgupperbound}
\min_\mM\Hg\le M.
\end{equation}
Indeed, fix any $(\bar z_1,\bar z_2)\in\mM$.
Then, taking into account of \eqref{eq:hproperties},
\begin{equation*}
\begin{aligned}
\min_{\mM}\Hg\le&\;\Hg(\bar z_1,\bar z_2)
=h(\bar z_1)+2\ga\bG(\bar z_1,\bar z_2)+\ga^2h(\bar z_2)\\
\le&\;h(\bar z_1)+2\bG(\bar z_1,\bar z_2)+h(\bar z_2)=:M.
\end{aligned}
\end{equation*}
\par
\textit{Claim 1.} $z_1^0\not\in\partial\Omega$.
\par
Arguing by contradiction, we assume that $\zg\to\partial\Omega$ as $\ga\to0^+$.
We deduce that 
\begin{align*}
\min_{\mM}\Hg=&\;\Hg(\zg,\zgg)
=h(\zg)+2\ga\bG(\zg,\zgg)+\ga^2h(\zgg)\\
\ge&\;h(\zg)\to+\infty,
\end{align*}
a contradiction to \eqref{eq:minHgupperbound}. We conclude that $z_1^0\in\Omega$.
\par
\textit{Claim 2.} $z_2^0\neq z_1^0$.
\par
Arguing by contradiction, assume that $d(\zg,\zgg)\to0$ as $\ga\to0^+$.
We deduce that
\begin{align*}
\min_{\mM}\Hg=&\;\Hg(\zg,\zgg)
\ge h(\zg)+2\ga\bG(\zg,\zgg)+\min_\Omega h\\
\ge&\;h(z_1^0)+2\ga\bG(\zg,\zgg)+\min_\Omega h-1\to+\infty,
\end{align*}
a contradiction to \eqref{eq:minHgupperbound}.
\par
\textit{Claim 3.}
$d(\zgg,\partial\Omega)\to0$.
\par
\par
Arguing by contradiction and we assume that there exists $\eps_0>0$ such that 
$d(z_2^\ga,\partial\Omega)\ge2\eps_0$ for all $\ga\in(0,\ga_\eta')$.
For every $x\in\Omega$ let $d_x=d(x,\partial\Omega)$.
Without loss of generality, we may assume that the tubular neighborhood
\[
\Omega_{2\eps_0}=\{x\in\Omega:\ d_x<2\eps_0\}
\]
is well-defined.
We denote by $\widetilde\pi_{\eps_0}$ the projection $\widetilde\pi_{\eps_0}:\Omega_{2\eps_0}\to\partial\Omega_{\eps_0}\setminus\partial\Omega$.
In view of the Hopf maximum principle we may also assume that for any $\zeta\in\partial\Omega$
the function
\[
\psi(t):=\bG(\zg,\zeta-t\nu_\zeta),
\]
where $\nu_\zeta$ denotes the outer normal at $\zeta$, is strictly increasing for $t\in[0,2\eps_0]$.
Moreover, applying the maximum principle to the harmonic function $\bG(\zg,\zgg)$ in $(\Omega\setminus B_\rho(\zg))\setminus\Omega_{2\eps_0}$,
where $\rho>0$ is a small constant, we find $\tilzgg\in\partial\Omega_{2\eps_0}\setminus\partial\Omega$ such that
\[
\inf_{\Omega\setminus\Omega_{2\eps_0}}\bG(\zg,\cdot)=\inf_{(\Omega\setminus B_\rho(\zg))\setminus\Omega_{2\eps_0}}\bG(\zg,\cdot)
=\inf_{\partial\Omega_{2\eps_0}\setminus\partial\Omega}\bG(\zg,\cdot)=\bG(\zg,\tilzgg).
\]
The strong maximum principle also implies that $\partial_\nu\bG(z_1^0,\cdot)\le -\alpha_0$
on $\partial\Omega$, for some $\alpha_0$. 
Consequently, there exists $\beta(\eps_0)$ such that  
\[
\bG(z_1^\ga,\tilzgg)-\bG(z_1^\ga,\widetilde\pi_{\eps_0}\tilzgg)\ge\beta(\eps_0).
\]
We conclude that
\begin{equation*}
\begin{aligned}
\Hg(z_1^\ga,z_2^\ga)
=&\;\min_\mM\Hg=h(\zg)+2\ga\bG(\zg,\zgg)+\ga^2h(\zgg)
\ge h(\zg)+2\ga\bG(\zg,\tilzgg)+\ga^2h(\zgg)\\
=&\;\Hg(z_1^\ga,\pi_{\eps_0}\tilzgg)
+\ga\{2[\bG(z_1^\ga,\tilzgg)-\bG(z_1^\ga,\pi_{\eps_0}\tilzgg)]+\ga[h(z_2^\ga)-h(\widetilde\pi_{\eps_0}\tilzgg)]\}.
\end{aligned}
\end{equation*}
Let $0<\ga_\eta''\le\ga_\eta'$ be such that
\[
\ga|h(z_2^\ga)-h(\widetilde\pi_{\eps_0}\tilzgg)|<\beta(\eps_0)
\]
for all $0<\ga<\ga_\eta''$.
Then, for all $0<\ga<\ga_\eta''$ we obtain that
\[
\Hg(z_1^\ga,z_2^\ga)=\min_{\mM}\Hg
>\Hg(z_1^\ga,\widetilde\pi_{\eps_0}\tilzgg)+\ga\beta(\eps_0)>\Hg(z_1^\ga,z_2^\ga),
\]
a contradiction.
\par
\textit{Claim 4.}
$h(z_1^0)=\min_\Omega h$.
\par
Arguing by contradiction, assume that $h(\zg)-\min_\Omega h\ge\eta_0>0$
for all sufficiently small values of $\ga$.
We write
\begin{align*}
\Hg(\zg,\zgg)=&\;h(\zg)+2\ga\bG(\zg,\zgg)+\ga^2 h(\zgg)\\
=&\;h(\zmin)+2\ga\bG(\zmin,\zgg)+\ga^2h(\zgg)
+h(\zg)-h(\zmin)
+2\ga[\bG(\zg,\zgg)-\bG(\zmin,\zgg)]\\
\ge&\;\Hg(\zmin,\zgg)+\eta_0+2\ga[\bG(\zg,\zgg)-\bG(\zmin,\zgg)],
\end{align*}
where $\zmin\in\Omega$ is a harmonic center for $\Omega$.
For $\ga$ sufficiently small we have $2\ga|\bG(\zg,\zgg)-\bG(\zmin,\zgg)|\le\eta_0/2$.
It follows that
\begin{align*}
\Hg(\zg,\zgg)\ge\Hg(\zmin,\zgg)+\frac{\eta_0}{2}\ge\min_{\mM}\Hg+\frac{\eta_0}{2},
\end{align*}
a contradiction.
We conclude that $z_1^0$ is a harmonic center for $\Omega$,
and Part~(i) is established.
\par
\textit{Claim 6.}
We denote by $\pi_{\eps_0}:\Omega_{\eps_0}\to\partial\Omega$ the standard projection.
We recall from \cite{BF}, p.~204 that the expansion
\begin{equation}
\label{eq:hexp}
h(z-d_z\nu(z))=\frac{1}{2\pi}\log(2d_z)+o(d_z)
\end{equation}
holds true for every $z\in\Omega_{\eps_0}$.
We also note that in view of the mean value theorem we may write
\begin{equation}
\label{eq:Gmv}
\bG(\zg,z)=\bG(\zg,\pi_{\eps_0}z-d_z\nu)=-\partial\bG_\nu(\zg,\pi_{\eps_0}z)d_z+o(d_z).
\end{equation}
Let $p\in\partial\Omega$. Then, $\Hg(\zg,\zgg)\le\Hg(\zg,p-d_{\zgg}\nu(p))$
and we deduce that
\begin{align*}
2\bG(\zg,\zgg)+\ga h(\zgg)\le2\bG(\zg,p-d_{\zgg}\nu(p))+\ga h(p-d_{\zgg}\nu(p))
\end{align*}
In view of \eqref{eq:hexp} and using $\zgg=\pi_{\eps_0}\zgg-d_{\zgg}\nu(\zgg)$, we derive
\[
\bG(\zg,\zgg)+\frac{\ga}{2}\left[\frac{1}{2\pi}\log(2d_{\zgg})+o(d_{\zgg})\right]
\le\bG(\zg,p-d_{\zgg}\nu(p))+\frac{\ga}{2}\left[\frac{1}{2\pi}\log(2d_{\zgg})+o(d_{\zgg})\right].
\]
from which we derive that
\[
\bG(\zg,\zgg)\le\bG(\zg,p-d_{\zgg}\nu(p))+o(d_{\zgg}).
\]
Finally, in view of \eqref{eq:Gmv} we deduce
\[
-\partial\bG_\nu(\zg,\pi_{\eps_0}\zgg)d_{\zgg}\le\partial_\nu\bG(\zg,p)d_{\zgg}+o(d_{\zgg})
\]
and finally
\[
\partial\bG_\nu(\zg,\pi_{\eps_0}\zgg)\ge\partial_\nu\bG(\zg,p).
\]
Letting $\ga\to0^+$ we conclude that
\[
\partial\bG_\nu(z_1^0,z_2^0)\ge\partial_\nu\bG(z_1^0,p)
\]
for any $p\in\partial\Omega$.
Now Part~(ii) is completely established.
\end{proof}
Now we can conclude the proof of Theorem~\ref{thm:properties}.
\begin{proof}[Proof of Theorem~\ref{thm:properties}-(ii)]
We observe that the reduced functional $\Ke(z_1,z_2)$ is of the form
\[
\Ke(z_1,z_2)=\Ie(\We)=C_1(\eps)a_1^2\mathcal H_{a_2/a_1}(z_1,z_2)+C_2(\eps)
\]
and that
\[
\frac{a_2}{a_1}=\frac{\ga+O\left(\frac{1}{|\ln\eps|}\right)}{1+O\left(\frac{1}{|\ln\eps|}\right)}
=\ga+O\left(\frac{1}{|\ln\eps|}\right).
\]
In view of Lemma~\ref{lem:Hamiltonianmax} we conclude that 
for any $\eta>0$ there exist a sufficiently small $\ga_0>0$
with the property that for any $\ga\in(0,\ga_0)$ there exists $\eps_\ga>0$
such that the peak points $(z_1,z_2)$ of the constructed solution $\ue$
satisfy $d(z_1,\zmin)<\eta$ and $d(z_2,\partial\Omega)<\eta$
for all $\eps\in(0,\eps_\ga)$.
\end{proof}


\begin{thebibliography}{99}
\bibitem{Ambrosetti1992}
Ambrosetti, A.,
{\em Critical points and nonlinear variational problems.}
M\'em.\ Soc.\ Math.\ Fr.\ S\'er.~2 \textbf{49} (1992), 1--139.
\bibitem{Bandle}
Bandle, C.,
Isoperimetric Inequalities and Applications.
Pitman, London, 1980.
\bibitem{BF}
Bandle, C., Flucher, M.,
\textit{Harmonic Radius and Concentration of Energy; Hyperbolic Radius
and Liouville's Equations $\Delta U=e^U$ and $\Delta U=U^{\frac{n+2}{n-2}}$.}
SIAM Review \textbf{38} (1996), 191--238.
\bibitem{BarPis}  
Bartolucci, D., Pistoia, A. 
{\em  Existence and qualitative properties of concentrating solutions
for the sinh-Poisson equation.} 
IMA Journal of Applied Mathematics {\bf 72} (2007) 706--729.
\bibitem{BartschPis}
Bartsch, T., Pistoia, A.,
{\em Critical Points of the $N-$vortex Hamiltonian in bounded planar domains and steady state solutions
of the incompressible Euler equations.}
SIAM J.\ Appl.\ Math.\ \textbf{75} (2015) n.~2, 726--744.
\bibitem{BerestyckiBrezis}
Berestycki, H., Brezis, H.,
{\em On a free boundary problem arising in plasma physics.}
Nonlinear Analysis \textbf{4} (1980), 415--436.
\bibitem{CaffarelliFriedman}
Caffarelli, L., Friedman, A.,
{\em Asymptotic estimates for the plasma problem.}
Duke Math.\ J., \textbf{47} (1980) 705--742.
\bibitem{CaffarelliFriedman1985}  
Caffarelli, L., Friedman, A.,
{\em Convexity of solutions of semilinear elliptic equations,}
Duke.\ Math.\ J. \textbf{52} (1985), 431--457.
\bibitem{CaoLiuWei}
Cao, D., Liu, Z., Wei, J.,
{\em Regularization of Point Vortices Pairs for the Euler Equation in Dimension Two.}
Arch.\ Rat.\ Mech.\ Anal.\ \textbf{212} (2014), 179--217.
\bibitem{CaoPengYan}  
Cao, D., Peng, S., Yan, S. 
{\em  Multiplicity of solutions for the plasma problem in two dimensions.} 
Advances in Mathematics {\bf 225} (2010) 2741--2785.
\bibitem{CaoPengYan2015}  
Cao, D., Peng, S., Yan, S. 
{\em  Planar vortex patch problem in incompressible steady flow.} 
Advances in Mathematics {\bf 270} (2015) 263--301.
\bibitem{CHP}
Croce, G., Henrot, A., Pisante, G.,
{\em An isoperimetric inequality for a nonlinear eigenvualue problem.}
Ann.\ I.~H.~Poincar\'e - AN \textbf{29} (2012), 21--34.
\bibitem{CHP1}
Croce, G., Henrot, A., Pisante, G., 
{\em Corrigendum to \lq\lq An isoperimetric inequality for a nonlinear eigenvalue problem" 
[Ann. I. H. Poincar\'e -- AN 29 (1) (2012) 21--34]} 
Ann.\ Inst.\ H.~Poincar\'e AN \textbf{32} (2015), 485--487.
\bibitem{DancerYan}
Dancer, E.N., Yan, S.,
{\em The Lazer-McKenna conjecture and a free boundary problem in two dimensions.}
J.\ London Math.\ Soc.\ \textbf{78} n.~2 (2008), 639--662.
\bibitem{EspositoMussoPistoia2007}
Esposito, P., Musso, M., Pistoia, A.,
{\em On the existence and profile of nodal solutions for a two-dimensional elliptic problem
with large exponent in nonlinearity.}
Proc.\ London Math.\ Soc.\ \textbf{94} no.~3 (2007), 497--519.
\bibitem{HartmanWintner}
Hartman, P., Wintner, A.,
{\em On the local behavior of non-parabolic partial differential equations.}
Am.\ J.\ Math.\ \textbf{85} (1953), 449--476.
\bibitem{kinderlehrerspruck}
Kinderlehrer, D., Spruck, J.,
{\em The shape and smoothness of stable plasma configurations.}
Ann.\ Sc.\ Norm.\ Pisa Ser.~IV (1978), 113--148.
\bibitem{MallierMaslowe}
Mallier, R., Maslowe, S.A.,
{\em A row of counter rotating vortices.}
Phys.\ Fluids A \textbf{5} (1993), 1074--1075.
\bibitem{PistoiaRicciardi}
Pistoia, A., Ricciardi, T.,
{\em Concentrating solutions for a Liouville type equation with variable intensities in 2D-turbulence.}
To appear in Nonlinearity.
arXiv:1505.05304.
\bibitem{SmVS}  
Smets, D., Van Schaftingen, J.,
{\em  Desingularization of Vortices for the Euler Equation.} 
Arch.\ Rational Mech.\ Anal.\ {\bf 198} (2010) 869--925.
\bibitem{Te}
Temam, R.,
{\em A nonlinear equilibrium problem: The shape at equilibrium of a confined plasma.}
Arch.\ Rat.\ Mech.\ Anal.\ \textbf{60} (1975), 51--73.
\end{thebibliography}
\end{document}